\theoremstyle{plain}
\newtheorem{thm}{Theorem}[section]
\newtheorem{dfn}[thm]{Definition}
\newtheorem{cor}[thm]{Corollary}
\newtheorem{lemma}[thm]{Lemma}
\newcounter{example}
\numberwithin{equation}{section}
\DeclareMathOperator{\cl}{cl}
\newcommand{\del}{\setminus}
\newcommand{\join}{\lor}
\newcommand{\meet}{\land}
\title[Connectivity Gaps]{Connectivity Gaps Among Matroids With The
  Same Enumerative Invariants} \author[J.~Bonin]{Joseph E.~Bonin}
\address
{Department of Mathematics\\ The George Washington University\\
  Washington, D.C.\ 20052, USA} \email {jbonin@gwu.edu,
  kevinlong@gwmail.gwu.edu} \author[K.~Long]{Kevin Long} \date{\today}
\keywords{Matroid, configuration, Tutte connectivity, vertical
  connectivity, branch-width}
\begin{document}

\begin{abstract}
  Many important enumerative invariants of a matroid can be obtained
  from its Tutte polynomial, and many more are determined by two
  stronger invariants, the $\mathcal{G}$-invariant and the
  configuration of the matroid.  We show that the same is not true of
  the most basic connectivity invariants.  Specifically, we show that
  for any positive integer $n$, there are pairs of matroids that have
  the same configuration (and so the same $\mathcal{G}$-invariant and
  the same Tutte polynomial) but the difference between their Tutte
  connectivities exceeds $n$, and likewise for vertical connectivity
  and branch-width.  The examples that we use to show this, which we
  construct using an operation that we introduce, are transversal
  matroids that are also positroids.
\end{abstract}

\maketitle

\section{Introduction}\label{section:introduction}

The most well-known and widely-studied enumerative invariant of a
matroid $M$ is its \emph{Tutte polynomial}, which is given by
$$T(M;x,y)=\sum_{A\subseteq E(M)}(x-1)^{r(M)-r(A)}(y-1)^{|A|-r(A)}.$$
From $T(M;x,y)$, one can deduce the number of bases of $M$, the number
of independent sets of $M$, the minimum size among the circuits of
$M$, the characteristic polynomial of $M$ (which, when $M$ is the
cycle matroid of a graph, includes the chromatic polynomial), and
much, much more.  See \cite{TomJames, Tutte} for extensive treatments
of this important invariant.  The Tutte polynomial of $M$ can be
computed from a strictly stronger enumerative invariant of $M$, the
$\mathcal{G}$-invariant, which was introduced more recently by Derksen
\cite{G-inv}.

For a matroid $M$, a subset $A$ of $E(M)$ is \emph{cyclic} if the
restriction $M|A$ has no coloops.  Equivalently, $A$ is cyclic if it
is a (possibly empty) union of circuits.  \emph{Cyclic flats} are
flats that are cyclic.  The cyclic flats of a matroid $M$ form a
lattice, which is denoted by $\mathcal{Z}(M)$.  Brylawski
\cite{affine} pointed out that a matroid is determined by its cyclic
flats and their ranks.  Thus, $T(M;x,y)$ is determined by the cyclic
flats of $M$ and their ranks.  Eberhardt \cite{config} showed that for
a matroid $M$ with no coloops, $T(M;x,y)$ is determined by
considerably less information, namely, the abstract lattice of cyclic
flats of $M$ along with the size and rank of the cyclic flat that
corresponds to each lattice element. (Extending this result to all
matroids by recording the number of coloops is completely routine.)
This abstract lattice along with the assignment of the size and rank
to each lattice element is what Eberhardt defined to be the
\emph{configuration} of $M$.  See Figure \ref{fig:configexamples} for
an example.

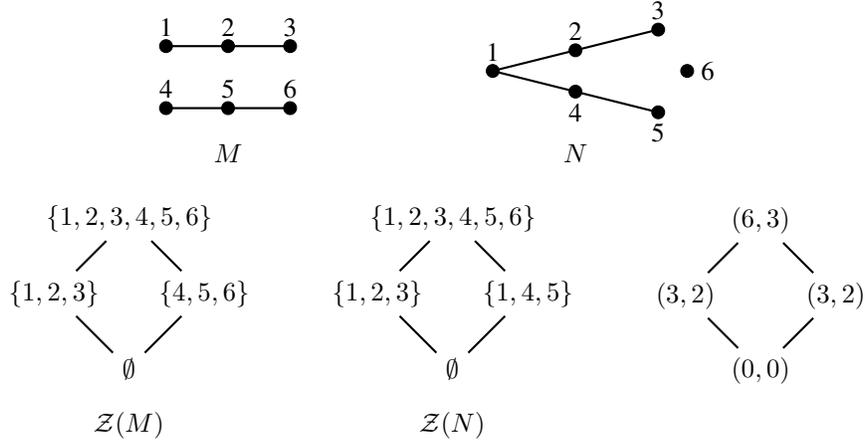
\begin{figure}[t]
  \centering

  \begin{tikzpicture}[scale=1.1]
    \draw[thick](0,0)--(1.5,0);%
    \draw[thick](0,-.75)--(1.5,-.75);%
    \filldraw (0,0) node {} circle (2.2pt);%
    \node (111) at (0,.23) {1};%
    \filldraw (.75,0) node {} circle (2.2pt);%
    \node at (.75,.23) {2};%
    \filldraw (1.5,0) node {} circle (2.2pt);%
    \node at (1.5,.23) {3};%
    \filldraw (0,-.75) node {} circle (2.2pt);%
    \node at (0,-.52) {4};%
    \filldraw (.75,-.75) node {} circle (2.2pt);%
    \node at (.75,-.52) {5};%
    \filldraw (1.5,-.75) node {} circle (2.2pt);%
    \node at (1.5,-.52) {6};%
    \node at (0.75,-1.3) {$M$};%
  \end{tikzpicture}
  \hspace{60pt}
  \begin{tikzpicture}[scale=1.1]
    \draw[thick](0,0)--(2,.5); %
    \draw[thick](0,0)--(2,-.5); %
    \filldraw (0,0) node {} circle (2.2pt); %
    \node at (0,.23) {1};%
    \filldraw (1,.25) node {} circle (2.2pt); %
    \node at (1,.48) {2}; %
    \filldraw (2,.5) node {} circle (2.2pt); %
    \node at (2,.73) {3}; %
    \filldraw (1,-.25) node {} circle (2.2pt); %
    \node at (1,-0.5) {4}; %
    \filldraw (2,-.5) node {} circle (2.2pt); %
    \node at (2,-.75) {5}; %
    \filldraw (2.35,0) node {} circle (2.2pt); %
    \node at (2.6,0) {6}; %
    \node at (1,-1) {$N$};%
  \end{tikzpicture}

  \bigskip
    
  \begin{tikzpicture}[scale = 1]
    \node (111) at (1, 3) {$\{1,2,3,4,5,6\}$};%
    \node (110) at (0, 2) {$\{1,2,3\}$};%
    \node (101) at (2, 2) {$\{4,5,6\}$};%
    \node (100) at (1, 1) {$\emptyset$};%
    \draw[thick] (111) -- (110);%
    \draw[thick] (111) -- (101);%
    \draw[thick] (110) -- (100);%
    \draw[thick] (101) -- (100); %
    \node at (1,0.25) {$\mathcal{Z}(M)$};%
  \end{tikzpicture}
  \hspace{20pt}
  \begin{tikzpicture}[scale = 1]
    \node (111) at (1, 3) {$\{1,2,3,4,5,6\}$};%
    \node (110) at (0, 2) {$\{1,2,3\}$};%
    \node (101) at (2, 2) {$\{1,4,5\}$};%
    \node (100) at (1, 1) {$\emptyset$}; %
    \draw[thick] (111) -- (110);%
    \draw[thick] (111) -- (101);%
    \draw[thick] (110) -- (100);%
    \draw[thick] (101) -- (100);%
    \node at (1,0.25) {$\mathcal{Z}(N)$};%
  \end{tikzpicture}
  \hspace{20pt}
  \begin{tikzpicture}[scale = 1]
    \node (111) at (1, 3) {$(6,3)$};%
    \node (110) at (0, 2) {$(3,2)$};%
    \node (101) at (2, 2) {$(3,2)$};%
    \node (100) at (1, 1) {$(0,0)$};%
    \draw[thick] (111) -- (110);%
    \draw[thick] (111) -- (101);%
    \draw[thick] (110) -- (100);%
    \draw[thick] (101) -- (100);%
    \node at (1,0.25) {{\color{white}(c)}};%
  \end{tikzpicture}
  \caption{Two non-isomorphic matroids, $M$ and $N$, of rank $3$,
    their lattices of cyclic flats, and the configuration that they
    share.  The size, $s$, and rank, $k$, of the cyclic flat that
    corresponds to an element in the lattice is denoted by the pair
    $(s,k)$.  }
  \label{fig:configexamples}
\end{figure}

In \cite{catdata}, Bonin and Kung showed that Eberhardt's result
extends to the $\mathcal{G}$-invariant: the $\mathcal{G}$-invariant of
a matroid with no coloops can be computed from its configuration.  (As
above, extending this to all matroids by recording the number of
coloops is routine.)  Given these results, we focus on the
configuration of a matroid.

A fundamental concept in structural matroid theory is connectivity, which
comes in many varieties, of which we focus on the three that are used
most often: Tutte connectivity, vertical connectivity, and
branch-width.  In contrast to the many enumerative invariants that are
shared by matroids that have the same configuration, we show that for
any positive integer $n$, there are pairs of matroids that have the
same configuration but the difference in their Tutte connectivities
exceeds $n$, and likewise for vertical connectivity and branch-width.
We show this in two steps: first, we give examples of small matroids
with the same configuration but different Tutte connectivities (or
vertical connectivities, or branch-widths); then, using the
$t$-expansion of a matroid, which we introduce in Section
\ref{section:texpansion}, we produce large examples, with the same
structural features, that magnify the connectivity gaps to any desired
degree.  The results on gaps in the various types of connectivity
follow from analyzing how each type of connectivity changes when we
apply $t$-expansion.

The examples that we use belong to two important classes of matroids:
transversal matroids and positroids.  While transversal matroids are a
long-studied class of matroids, positroids were introduced much more
recently, by Blum \cite{blum} (calling them base-sortable matroids)
and Postnikov \cite{post}.  A rich theory of positroids is developing
rapidly (e.g., see \cite{ARW,blum,oh,post}).  A \emph{positroid} is a
matroid $M$, say of rank $r$ and with $|E(M)|=n$, that has a matrix
representation by an $r$ by $n$ matrix over $\mathbb{R}$ having the
property that each submatrix that is formed by deleting all but $r$
columns has a nonnegative determinant.  To show that all of our
examples are transversal and are positroids, we show this is true of
our initial examples and show that both of these classes of matroids
are preserved by $t$-expansion.

We do not address whether such gaps in connectivity, among matroids
with the same configuration, can be found in other commonly-studied
classes of matroids, such as graphic or binary matroids, or any
minor-closed, well-quasi-ordered family of matroids.  This is an
interesting question for future research.

In Section \ref{section:background} we recall the required background.
The $t$-expansion of a matroid is defined in Section
\ref{section:texpansion} and its basic properties are developed.  The
results on the effect of $t$-expansion on the three types of
connectivity, and the gaps in the connectivities of matroids that have
the same configuration, are treated in Sections
\ref{section:connectivity} (for Tutte connectivity),
\ref{section:vertical} (for vertical connectivity), and
\ref{section:branch} (for branch-width).

We assume basic familiarity with matroid theory, for which we refer to
\cite{oxley}, and we follow the notation established there.  We use
$\mathbb{N}$ to denote the set of positive integers and $[n]$ to
denote the set $\{1,2,\ldots,n\}$. When discussing a matroid $M$ and
subset $X$ of $E(M)$, we use $\overline{X}$ for the complement,
$E(M)-X$, of $X$ relative to $E(M)$.

\section{Background}\label{section:background}

For a matroid $M$ and subset $X$ of $E(M)$, we have
$M=(M|X)\oplus(M\del X)$ if and only if $r(M) = r(X)+r(\overline{X})$.
A subset $X$ of $E(M)$ for which those equalities hold is a
\emph{separator} of $M$.  A separator $X$ of $M$ is \emph{nontrivial}
if $X$ is neither $\emptyset$ nor $E(M)$.  Matroids that have no
nontrivial separators are \emph{connected}.  The notions of
connectivity that we review next capture higher types of connectivity.

\subsection{Tutte connectivity and vertical connectivity}\label{ssec:tcvc}

For greater detail on all topics in this section, the reader is
referred to \cite[Chapter 8]{oxley}.  All notions of connectivity that
we discuss are defined using the \emph{connectivity function}
$\lambda_M:2^{E(M)}\to\mathbb{Z}$ of a matroid $M$, which is given by
$$\lambda_M(X)
= r_M(X)+r_M(\overline{X})-r(M)$$ for $X\subseteq E(M)$.  It follows that
$\lambda_M(X)=\lambda_M(\overline{X})$ and that
$\lambda_M(X)=\lambda_{M^*}(X)$ for all $X\subseteq E(M)$.

For $k\in\mathbb{N}$, a \emph{$k$-separation} of a matroid $M$ is a
pair $(X,\overline{X})$, with $X\subseteq E(M)$, for which
$|X|\geq k$, $|\overline{X}|\geq k$, and $\lambda_M(X)<k$.  Thus,
$(X,\overline{X})$ is a $1$-separation if and only if $X$ (and thus
$\overline{X}$) is a nontrivial separator.  For a matroid $M$ that has
a $k$-separation for at least one $k\in\mathbb{N}$, its
\emph{connectivity} or \emph{Tutte connectivity}, denoted $\tau(M)$,
is the least $k$ for which $M$ has a $k$-separation.  The only
matroids that have no $k$-separations for any $k\in \mathbb{N}$ are
the uniform matroids $U_{r,n}$ with $r\in\mathbb{N}$ and
$n\in\{2r-1,2r,2r+1\}$; for such matroids, we set $\tau(M)=\infty$.  A
matroid $M$ is \emph{$n$-connected} or \emph{Tutte $n$-connected} if
$\tau(M)\geq n$.

For $k\in\mathbb{N}$, a \emph{vertical $k$-separation} of $M$ is a
pair $(X,\overline{X})$, with $X\subseteq E(M)$, for which
$r(X)\geq k$, $r(\overline{X})\geq k$, and $\lambda_M(X)<k$.  It is
easy to show that a matroid $M$ has a vertical $k$-separation for at
least one $k\in\mathbb{N}$ if and only if $E(M)$ is a union of two
proper flats of $M$.  For such a matroid $M$, its \emph{vertical
  connectivity}, denoted $\kappa(M)$, is the least $k$ for which $M$
has a vertical $k$-separation.  If $E(M)$ is not the union of two
proper flats, we set $\kappa(M)=r(M)$.  A matroid $M$ is
\emph{vertically $n$-connected} if $\kappa(M)\geq n$.

For example, for the matroids shown in Figure
\ref{fig:configexamples}, we have $\lambda_M(\{1,2,3\})=1$ while
$\lambda_N(\{1,2,3\})=2$, and $\tau(M)=\kappa(M)=2$, while
$\tau(N)=\kappa(N)=3$.

While $\tau(M)=\tau(M^*)$, the same is not true for vertical
connectivity.

Note that if $e$ is a loop of $M$ and $|E(M)|\geq 2$, then
$(\{e\},\overline{\{e\}})$ is a $1$-separation of $M$ but not a
vertical $1$-separation.  Indeed, if $L$ is the set of loops of $M$,
then $\kappa(M)=\kappa(M\del L)$.  Thus, a matroid $M$ that has loops
may have $\kappa(M)>1$; on the other hand, for a loopless matroid $M$,
we have $\kappa(M)=1$ if and only if $M$ is disconnected.

We will use the following basic property about vertical connectivity,
which must be well known but we are not aware of a source to cite for
it.

\begin{lemma}\label{lem:decrcon}
  For any matroid $M$ with $|E(M)|\geq 2$ and any element $e$ of
  $E(M)$, we have $\kappa(M)-1\leq \kappa(M\del e)$.
\end{lemma}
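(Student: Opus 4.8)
The plan is to compare a vertical separation of $M\del e$ directly against the connectivity of $M$ by adding $e$ back to one side. Suppose $\kappa(M\del e)=j$, so $M\del e$ has a vertical $j$-separation $(X,Y)$ with $X\cup Y=E(M\del e)$, where $r_{M\del e}(X)\geq j$, $r_{M\del e}(Y)\geq j$, and $\lambda_{M\del e}(X)<j$. The natural move is to form a partition of $E(M)$ by putting $e$ into one of the two blocks, say considering the pair $(X\cup\{e\},Y)$, and to show this is a vertical $(j+1)$-separation of $M$, or else that $M$ has an even smaller vertical separation. If I can show $M$ always has a vertical $(j+1)$-separation (when one exists at all), then $\kappa(M)\leq j+1=\kappa(M\del e)+1$, which rearranges to the claimed inequality $\kappa(M)-1\leq\kappa(M\del e)$.

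The key technical point is controlling how ranks and the connectivity function change when passing between $M$ and $M\del e$. First I would record the standard facts: $r_M(Z)\leq r_{M\del e}(Z)+1$ is false in the wrong direction, so instead I use $r_{M\del e}(Z)=r_M(Z)$ for any $Z\subseteq E(M\del e)$ (deletion does not change ranks of subsets avoiding $e$), together with $r_M(Z\cup\{e\})\in\{r_M(Z),r_M(Z)+1\}$ and $r(M)\in\{r(M\del e),r(M\del e)+1\}$. Using these, I would bound $\lambda_M(X\cup\{e\})=r_M(X\cup\{e\})+r_M(Y)-r(M)$. Since $r_M(X\cup\{e\})\leq r_M(X)+1=r_{M\del e}(X)+1$ and $r_M(Y)=r_{M\del e}(Y)$ while $r(M)\geq r(M\del e)$, we get $\lambda_M(X\cup\{e\})\leq \lambda_{M\del e}(X)+1<j+1$, so the separation bound is satisfied. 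The rank conditions $r_M(X\cup\{e\})\geq r_{M\del e}(X)\geq j$ and $r_M(Y)=r_{M\del e}(Y)\geq j$ only give $\geq j$, not $\geq j+1$, which is where care is needed.

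The main obstacle is that the rank lower bounds on the two sides need not reach $j+1$, so $(X\cup\{e\},Y)$ need not literally be a vertical $(j+1)$-separation; a side of rank exactly $j$ can fail the requirement. The plan to handle this is a short case analysis. If both sides have $M$-rank at least $j+1$, we are done as above. Otherwise some side, say $Y$, has $r_M(Y)=j$; but then I would check that the \emph{smaller} separation value already witnessed, namely $\lambda_M(X\cup\{e\})<j+1$ combined with $r_M(Y)=j\geq j$ and $r_M(X\cup\{e\})\geq j$, still certifies a vertical $k$-separation for $k=j$ or confirms $\kappa(M)\leq j$ outright, which is even stronger than the desired bound. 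I would also separately dispose of the degenerate cases: when $M\del e$ has no vertical separation at all, so $\kappa(M\del e)=r(M\del e)$, I would verify the inequality using $r(M)\leq r(M\del e)+1$; and when $\kappa(M)=r(M)$ because $E(M)$ is not a union of two proper flats, the bound again follows from the rank relation between $M$ and $M\del e$. Assembling these cases yields $\kappa(M)-1\leq\kappa(M\del e)$ in all situations.
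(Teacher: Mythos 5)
Your overall route is the paper's route: take a vertical $j$-separation $(X,Y)$ of $M\del e$ with $j=\kappa(M\del e)$, reattach $e$ to one side, and conclude that $M$ has a vertical separation of order at most $j+1$. Several pieces are right: the bound $\lambda_M(X\cup\{e\})\leq \lambda_{M\del e}(X)+1\leq j$, the easy case in which both sides have $M$-rank at least $j+1$, and the degenerate case $\kappa(M\del e)=r(M\del e)$, where your argument $\kappa(M)\leq r(M)\leq r(M\del e)+1$ is in fact simpler than the paper's treatment of that case. But the case you yourself flag as the main obstacle is not closed, and the claim you make to close it is false. When some side, say $Y$, has $r_M(Y)=j$, you assert that $\lambda_M(X\cup\{e\})<j+1$ together with both ranks being at least $j$ ``still certifies a vertical $k$-separation for $k=j$ or confirms $\kappa(M)\leq j$ outright.'' A vertical $j$-separation requires the \emph{strict} inequality $\lambda_M(X\cup\{e\})<j$, whereas you have only established $\lambda_M(X\cup\{e\})\leq j$, and nothing you have written forces $\kappa(M)\leq j$. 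Concretely, let $M=U_{3,5}$ and let $e$ be any element, so $M\del e=U_{3,4}$; splitting $E(M\del e)$ into two pairs $(X,Y)$ gives a vertical $2$-separation, so $j=2$. Here $r_M(Y)=2=j$ and $\lambda_M(X\cup\{e\})=2$, yet $\kappa(M)=3$: $M$ has no vertical $2$-separation at all, so your claimed dichotomy fails (the lemma still holds, but your route to it does not). The same example shows that your fallback for the case $\kappa(M)=r(M)$ is also unjustified: the rank relation bounds $\kappa(M)-1$ by $r(M\del e)$, but what is needed is a bound by $\kappa(M\del e)$, which can be strictly smaller (here $\kappa(M\del e)=2<3=r(M\del e)$), so that step is essentially circular.

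What is missing are the two ideas the paper uses at exactly this point. First reduce to $M$ loopless and connected (loops are irrelevant to $\kappa$, and the disconnected case is trivial), so that $r(M)=r(M\del e)$, and reduce to the hard case $\lambda_{M\del e}(X)=j-1$ with $r(X\cup\{e\})>r(X)$ and $r(Y\cup\{e\})>r(Y)$; in every other case $(X\cup\{e\},Y)$ or $(X,Y\cup\{e\})$ is already a vertical $j$-separation of $M$. Then: (a) if one side, say $X$, has $r(X)\geq j+1$, put $e$ on the \emph{other} side; the pair $(X,Y\cup\{e\})$ has both ranks at least $j+1$ and $\lambda_M(X)=\lambda_{M\del e}(X)+1=j<j+1$, so it is a vertical $(j+1)$-separation of $M$; (b) if instead $r(X)=r(Y)=j$, then $j-1=\lambda_{M\del e}(X)=2j-r(M\del e)$ forces $r(M)=r(M\del e)=j+1$, and the bound follows from $\kappa(M)\leq r(M)$ with no separation of $M$ needed at all --- this is precisely what happens in the $U_{3,5}$ example. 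Without steps (a) and (b), your case analysis does not close, so the proof as proposed has a genuine gap.
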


\begin{proof}
  Let $k = \kappa(M\del e)$. By the discussion above, we can assume
  that $M$ has no loops.  The inequality is obvious if $M$ is
  disconnected, so assume that $M$ is connected.  Thus,
  $r(M)=r(M\del e)$.  If $k = r(M\del e)$, then $M\del e$ is not a
  union of two proper flats, which implies the same for $M$, so
  $\kappa(M)=r(M)$, and so the inequality holds.  Now assume that
  $k < r(M\del e)$.  Let $(X,Y)$ be a vertical $k$-separation of
  $M\del e$.

  If either $\lambda_{M\del e}(X) <k-1$ or $r(X\cup e)=r(X)$, then
  $(X\cup e,Y)$ is a vertical $k$-separation of $M$.  If
  $r(Y\cup e)=r(Y)$, then $(X,Y\cup e)$ is a vertical $k$-separation
  of $M$.  Each case yields $\kappa(M)\leq k$ and so gives the desired
  inequality.  Thus, we may assume that $\lambda_{M\del e}(X) =k-1$,
  that $r(X\cup e)>r(X)$, and that $r(Y\cup e)>r(Y)$.

  If $r(X)>k$, then $r(X)\geq k+1$, $r(Y\cup e)\geq k+1$, and
  $\lambda_M(X)=k$, so $(X,Y\cup e)$ is a $(k+1)$-separation of $M$,
  so $\kappa(M)-1\leq k$.  Thus, we may assume that $r(X)=k$.  The
  same argument, switching $X$ and $Y$, shows that we may assume that
  $r(Y)=k$.

  From $\lambda_{M\del e}(X) =k-1$ and $r(X)=k=r(Y)$ we get
  $k=r(M\del e)-1=r(M)-1$.  Since $\kappa(M)\leq r(M)$, the inequality
  $\kappa(M)-1\leq k$ follows.
\end{proof}

\subsection{Branch-width and tangles}\label{sec:bkbwt}

A tree $T$ is \emph{cubic} if each vertex that is not a leaf has
degree $3$.  Let $L(T)$ be the set of leaves of $T$.  A
\emph{branch-decomposition} $T_\phi$ of a matroid $M$ is a cubic tree
$T$ along with an injection $\phi:E(M)\to L(T)$; we call $a\in E(M)$
the \emph{label} on the leaf $\phi(a)$.  Given a branch-decomposition
of $M$, for each edge $e$ in $T$, the deletion $T-e$ has two
components, and the sets of labels on the leaves in these components
give complementary subsets $X$ and $\overline{X}$ of $E(M)$; we say
that the edge $e$ \emph{displays} $(X,\overline{X})$.  The \emph{width
  $w(e)$ of the edge $e$} that displays $(X,\overline{X})$ is
$\lambda_M(X)+1$.  The \emph{width $w(T_\phi)$ of the
  branch-decomposition $T_\phi$} is the maximum width of an edge in
$T$.  The \emph{branch-width} of $M$, denoted $bw(M)$, is the minimum
width $w(T_\phi)$ over all branch-decompositions of $M$.

Since $\lambda_M(X)=\lambda_{M^*}(X)$, it follows that
$bw(M)=bw(M^*)$.  Also, $w(e)=1$ if and only if the sets in the pair
that $e$ displays are separators of $M$, so $bw(M)=1$ if and only if
each element of $M$ is a loop or a coloop.

For tangles, we follow Geelen, Gerards, Robertson, and Whittle
\cite{tangle1}.  For a matroid $M$ and $k\in\mathbb{N}$, a
\emph{tangle of order} $k$ is a set $\mathcal{T}$ of subsets of $E(M)$
such that
\begin{itemize}
\item[(T1)] if $X\in \mathcal{T}$, then $\lambda_M(X)<k-1$,
\item[(T2)] if $X\subseteq E(M)$ and $\lambda_M(X)<k-1$, then either
  $X\in\mathcal{T}$ or $\overline{X}\in\mathcal{T}$,
\item[(T3)] if $X,Y,Z\in\mathcal{T}$, then $X\cup Y\cup Z\ne E(M)$,
  and
\item[(T4)] if $e\in E(M)$, then $E(M)-e\not\in\mathcal{T}$.
\end{itemize}

In \cite[Theorem 3.2]{tangle1}, Geelen, Gerards, Robertson, and
Whittle prove the following connection between branch-width and
tangles (in a broader setting than we state here).

\begin{thm}\label{thm:tangle}
  The maximum order of a tangle of a matroid $M$ is the branch-width,
  $bw(M)$.
\end{thm}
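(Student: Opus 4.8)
The plan is to prove the stated equality as two inequalities. Writing $\theta(M)$ for the maximum order of a tangle of $M$, I would show $\theta(M)\le bw(M)$ and $\theta(M)\ge bw(M)$; equivalently, that $M$ has a tangle of order $k$ if and only if $bw(M)\ge k$. The first (easier) direction is that a tangle of order $k$ forces every branch-decomposition to contain an edge of width at least $k$, so that $bw(M)\ge k$. The second (harder) direction is that the absence of a tangle of order $k$ lets one assemble a branch-decomposition of width at most $k-1$, which applied with $k=bw(M)$ produces a tangle of order $bw(M)$.

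For $\theta(M)\le bw(M)$, I would fix a tangle $\mathcal{T}$ of order $k$ and an arbitrary branch-decomposition $T_\phi$, assuming after pruning that $\phi$ is a bijection onto $L(T)$. Suppose for contradiction that every edge has width less than $k$, so each edge $e$ displays a pair $(X,\overline{X})$ with $\lambda_M(X)<k-1$. By (T2) at least one side lies in $\mathcal{T}$, and since (T3) (allowing repeated arguments) prevents both sides from lying in $\mathcal{T}$, exactly one does; I orient $e$ toward the side \emph{not} in $\mathcal{T}$. For a leaf with label $a$, axiom (T4) gives $\overline{\{a\}}\notin\mathcal{T}$, so $\{a\}\in\mathcal{T}$ and the edge points away from the leaf; hence no leaf is a sink. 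Following out-edges through the finite acyclic tree must therefore terminate at a degree-$3$ vertex $v$ all of whose incident edges point toward $v$. If $A$, $B$, $C$ are the label sets of the three branches at $v$, then each edge pointing toward $v$ says its branch is the side lying in $\mathcal{T}$, so $A,B,C\in\mathcal{T}$; but $A\cup B\cup C=E(M)$, contradicting (T3). Thus some edge has width at least $k$, giving $bw(M)\ge k$.

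For $\theta(M)\ge bw(M)$, I would prove the contrapositive of the remaining implication: if $M$ has no tangle of order $k$, then $bw(M)\le k-1$. The strategy is to build a branch-decomposition of width at most $k-1$ recursively, starting from $E(M)$ and repeatedly partitioning each current part by a separation of order less than $k-1$, then assembling these splits into a single cubic tree. Submodularity of the connectivity function, $\lambda_M(X)+\lambda_M(Y)\ge\lambda_M(X\cap Y)+\lambda_M(X\cup Y)$, is the essential tool: it lets one uncross competing low-order separations and choose the splits so that every displayed separation keeps width below $k$. The tangle axioms (T1)--(T4) are precisely engineered so that any obstruction to decomposing a part within width $k-1$ is itself a tangle of order $k$; hence the hypothesis that no such tangle exists guarantees that the recursion never gets stuck and always produces a decomposition of the desired width.

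I expect this second direction to be the main obstacle. Orienting separations according to a tangle, as in the first direction, is a short counting argument on the tree; by contrast, manufacturing a branch-decomposition from the mere non-existence of a tangle requires the full submodular duality machinery of Robertson--Seymour and Geelen--Gerards--Robertson--Whittle, together with careful bookkeeping to ensure that the recursively chosen separations can be realized simultaneously by one cubic tree with no edge exceeding width $k-1$.
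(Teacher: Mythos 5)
First, a point of comparison: the paper does not prove this statement at all. It is quoted verbatim (in a restricted setting) from Geelen, Gerards, Robertson, and Whittle \cite[Theorem 3.2]{tangle1}, so there is no internal proof to match your attempt against; the relevant question is whether your attempt stands on its own. Your first direction does: given a tangle $\mathcal{T}$ of order $k$ and a hypothetical branch-decomposition in which every edge has width less than $k$, exactly one side of each displayed pair lies in $\mathcal{T}$ (by (T2), and by (T3) applied with a repeated set), orienting each edge toward the side not in $\mathcal{T}$ makes every leaf edge point inward by (T4), and following out-edges in the finite tree reaches a sink, which must be an internal degree-$3$ vertex whose three branches all lie in $\mathcal{T}$ and cover $E(M)$, contradicting (T3). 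This is the standard argument and it is complete.

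The second direction, however, is a genuine gap rather than a proof. Your key sentence --- ``the tangle axioms (T1)--(T4) are precisely engineered so that any obstruction to decomposing a part within width $k-1$ is itself a tangle of order $k$'' --- is not a lemma you prove; it \emph{is} the hard half of the theorem, asserted as a design feature of the axioms. To turn the plan into a proof one must exhibit the candidate tangle and verify the axioms: for instance, call a set $X$ with $\lambda_M(X)<k-1$ \emph{branched} if it admits a partial branch-decomposition of width less than $k$, and show that if $E(M)$ is not so decomposable then the branched sets of low order satisfy (T1)--(T4). The crux is (T3): one must prove that three branched sets cannot cover $E(M)$, because their partial decompositions could otherwise be merged into a full decomposition of width less than $k$; this merging step is where submodularity and uncrossing genuinely enter, and it is a nontrivial lemma, not bookkeeping. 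Your sketch never names the candidate tangle, never states this combination lemma, and ultimately defers to ``the full submodular duality machinery of Robertson--Seymour and Geelen--Gerards--Robertson--Whittle,'' i.e., to the very literature in which the theorem is proved. That is exactly how the paper itself treats the statement --- as a citation --- but as a self-contained proof your attempt establishes only the inequality that a tangle of order $k$ forces $bw(M)\geq k$, not the converse.
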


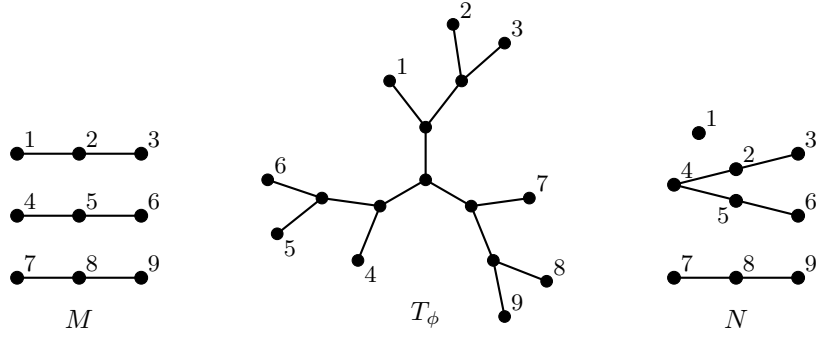
\begin{figure}
  \centering
  \begin{tikzpicture}[scale=1.1]
    \draw[thick](0,0)--(1.5,0);%
    \draw[thick](0,-.75)--(1.5,-.75);%
    \draw[thick](0,-1.5)--(1.5,-1.5);%
    \filldraw (0,0) node[above right =-1] {\small$1$} circle (2.2pt);%
    \filldraw (0.75,0) node[above right =-1] {\small$2$} circle
    (2.2pt);%
    \filldraw (1.5,0) node[above right =-1] {\small$3$} circle
    (2.2pt);%
    \filldraw (0,-0.75) node[above right =-1] {\small$4$} circle
    (2.2pt);%
    \filldraw (0.75,-0.75) node[above right =-1] {\small$5$} circle
    (2.2pt);%
    \filldraw (1.5,-0.75) node[above right =-1] {\small$6$} circle
    (2.2pt);%
    \filldraw (0,-1.5) node[above right =-1] {\small$7$} circle
    (2.2pt);%
    \filldraw (0.75,-1.5) node[above right =-1] {\small$8$} circle
    (2.2pt);%
    \filldraw (1.5,-1.5) node[above right =-1] {\small$9$} circle
    (2.2pt);%

    \node at (0.75,-2) {$M$};%
  \end{tikzpicture}
  \hspace{30pt}
  \begin{tikzpicture}[scale=1]
    \draw[thick] (0,0)--(90:0.7)--(70:1.4)--(60:2.1);%
    \draw[thick] (90:0.7)--(110:1.4);%
    \draw[thick] (70:1.4)--(80:2.1);%
    
    \draw[thick] (0,0)--(210:0.7)--(190:1.4)--(180:2.1);%
    \draw[thick] (210:0.7)--(230:1.4);%
    \draw[thick] (190:1.4)--(200:2.1);%

    \draw[thick] (0,0)--(330:0.7)--(310:1.4)--(300:2.1);%
    \draw[thick] (330:0.7)--(350:1.4);%
    \draw[thick] (310:1.4)--(320:2.1);%

    \filldraw (0,0) circle (2.2pt);%
    \filldraw (90:0.7) circle (2.2pt);%
    \filldraw (210:0.7) circle (2.2pt);%
    \filldraw (330:0.7) circle (2.2pt);%
    \filldraw (110:1.4) node[above right =-1] {\small$1$} circle
    (2.2pt);%
    \filldraw (70:1.4) circle (2.2pt);%
    \filldraw (80:2.1) node[above right =-1] {\small$2$} circle
    (2.2pt);%
    \filldraw (60:2.1) node[above right =-1] {\small$3$} circle
    (2.2pt);%

    \filldraw (230:1.4) node[below right =-1] {\small$4$} circle
    (2.2pt);%
    \filldraw (190:1.4) circle (2.2pt);%
    \filldraw (200:2.1) node[below right =-1] {\small$5$} circle
    (2.2pt);%
    \filldraw (180:2.1) node[above right =-1] {\small$6$} circle
    (2.2pt);%

    \filldraw (350:1.4) node[above right =-1] {\small$7$} circle
    (2.2pt);%
    \filldraw (310:1.4) circle (2.2pt);%
    \filldraw (320:2.1) node[above right =-1] {\small$8$} circle
    (2.2pt);%
    \filldraw (300:2.1) node[above right =-1] {\small$9$} circle
    (2.2pt);%

    \node at (0,-1.8) {$T_\phi$};%
  \end{tikzpicture}
  \hspace{30pt}
  \begin{tikzpicture}[scale=1.1]
    \draw[thick](0,-0.375)--(1.5,0);%
    \draw[thick](0,-0.375)--(1.5,-.75);%
    \draw[thick](0,-1.5)--(1.5,-1.5);%
    \filldraw (0.3,0.25) node[above right =-1] {\small$1$} circle (2.2pt);%
    \filldraw (0.75,-0.187) node[above right =-1] {\small$2$} circle
    (2.2pt);%
    \filldraw (1.5,0) node[above right =-1] {\small$3$} circle
    (2.2pt);%
    \filldraw (0,-0.375) node[above right =-1] {\small$4$} circle
    (2.2pt);%
    \filldraw (0.75,-0.57) node[below left =-1] {\small$5$} circle
    (2.2pt);%
    \filldraw (1.5,-0.75) node[above right =-1] {\small$6$} circle
    (2.2pt);%
    \filldraw (0,-1.5) node[above right =-1] {\small$7$} circle
    (2.2pt);%
    \filldraw (0.75,-1.5) node[above right =-1] {\small$8$} circle
    (2.2pt);%
    \filldraw (1.5,-1.5) node[above right =-1] {\small$9$} circle
    (2.2pt);%

    \node at (0.75,-2) {$N$};%
  \end{tikzpicture}
  \caption{A branch-decomposition of two rank-$3$ matroids, $M$ and
    $N$.  The width of each edge in $T_\phi$ that is incident with a
    leaf is $2$.  For $M$, the width of each other edge is $3$, so
    $w_M(T_\phi)=3$.  For $N$, the width of the vertical edge in the
    center is $4$, while all others that are not incident with leaves
    have width $3$, so $w_N(T_\phi)=4$.}\label{fig:bwexample}
\end{figure}

For example, for the matroid $M$ in Figure \ref{fig:bwexample}, a
branch-decomposition of width $3$ is shown, so $bw(M)\leq 3$.  The set
$\mathcal{T}$ of subsets of $E(M)$ of size at most one satisfies the
properties of a tangle of order $3$, so $bw(M)\geq 3$.  Thus,
$bw(M)=3$.  Likewise the branch-decomposition of the matroid $N$ in
that figure shows that $bw(N)\leq 4$, and the tangle of order $4$ that
consists of all sets of rank at most $2$ gives $bw(N)\geq 4$, so
$bw(N)=4$.  Note that $M$ and $N$ have the same configuration.  These
examples also illustrate the following result \cite[Lemma
3.2]{tangle}.

\begin{lemma}\label{lem:ggw}
  If $k\geq 3$, then each tangle of order $k$ in a matroid $M$
  contains all subsets $X$ of $E(M)$ for which $r(X)<k-1$.
\end{lemma}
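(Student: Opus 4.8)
The plan is to fix a tangle $\mathcal{T}$ of order $k\geq 3$ and a subset $X$ with $r(X)<k-1$, and to show $X\in\mathcal{T}$. The starting observation is that $\lambda_M(X)=r(X)+r(\overline{X})-r(M)\leq r(X)<k-1$, so axiom (T2) forces either $X\in\mathcal{T}$ or $\overline{X}\in\mathcal{T}$. Hence it suffices to rule out the case $X\notin\mathcal{T}$, i.e.\ the case $\overline{X}\in\mathcal{T}$; I would derive a contradiction from (T3) by exhibiting three members of $\mathcal{T}$, one of them $\overline{X}$, whose union is all of $E(M)$.

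I would argue by induction on $|X|$, ranging over all subsets of rank less than $k-1$ at once. For the base cases, note first that for a single element $e$ we have $\lambda_M(\{e\})\leq r(\{e\})\leq 1<k-1$ (this is exactly where the hypothesis $k\geq 3$ enters), so (T2) gives $\{e\}\in\mathcal{T}$ or $E(M)-e\in\mathcal{T}$, and (T4) excludes the latter, leaving $\{e\}\in\mathcal{T}$. Similarly $\lambda_M(\emptyset)=0<k-1$, while $E(M)\in\mathcal{T}$ is impossible by (T3) applied with $X=Y=Z=E(M)$, so $\emptyset\in\mathcal{T}$.

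For the inductive step, suppose $|X|\geq 2$ and, for contradiction, that $\overline{X}\in\mathcal{T}$. Choose any $e\in X$. Then $X-e$ and $\{e\}$ are proper subsets of $X$, each of rank at most $r(X)<k-1$, so by the inductive hypothesis (using the singleton base case for $\{e\}$) both lie in $\mathcal{T}$. But $\overline{X}\cup(X-e)\cup\{e\}=E(M)$, so the three members $\overline{X}$, $X-e$, and $\{e\}$ of $\mathcal{T}$ cover $E(M)$, contradicting (T3). Therefore $X\in\mathcal{T}$, which completes the induction.

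The substantive idea here, rather than a genuine obstacle, is the covering trick: to place a low-rank set $X$ into the tangle I split it as $(X-e)\cup\{e\}$ and combine these two smaller tangle members with $\overline{X}$ to violate (T3). The only points needing care are that the rank bound is stable under the reduction—automatic, since deleting an element cannot raise the rank, so $r(X-e)\leq r(X)<k-1$—and that the hypothesis $k\geq 3$ is used in an essential way, namely to guarantee that every singleton has connectivity below $k-1$ and hence belongs to $\mathcal{T}$; for $k=2$ a singleton of rank $1$ need not satisfy $\lambda_M(\{e\})<k-1$, and the conclusion can fail.
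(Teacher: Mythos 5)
Your proof is correct, but there is nothing in the paper to compare it against: the paper does not prove this lemma, it quotes it from Geelen--Gerards--Whittle \cite[Lemma 3.2]{tangle}. Taken on its own merits, your argument is the natural self-contained one and every step checks out: the inequality $\lambda_M(X)\leq r_M(X)<k-1$ makes (T2) applicable to $X$; singletons lie in $\mathcal{T}$ by (T2) and (T4); the empty set lies in $\mathcal{T}$ because $E(M)\in\mathcal{T}$ would violate (T3) (which, as stated, allows repeated sets); and the inductive step correctly combines $\overline{X}$, $X-e$, and $\{e\}$ to contradict (T3), with the rank bound preserved since $r(X-e)\leq r(X)$. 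The deduction ``$\overline{X}\notin\mathcal{T}$, hence $X\in\mathcal{T}$'' is justified by (T2), as you note.

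One quibble with your closing commentary, not with the proof itself: the hypothesis $k\geq 3$ is less essential than you claim. In the inductive step you only ever need singletons $\{e\}$ with $e\in X$, and for such $e$ we have $r(\{e\})\leq r(X)<k-1$ automatically, so $\lambda_M(\{e\})<k-1$ and hence $\{e\}\in\mathcal{T}$ without invoking $k\geq 3$. In particular, for $k=2$ the lemma concerns only sets of rank $0$, all of whose elements are loops, and your induction then goes through verbatim; so the conclusion does not in fact fail for $k=2$ --- that case is merely uninteresting. What $k\geq 3$ buys is the stronger statement that \emph{every} singleton lies in the tangle, which is how you organized your base case, but it is not needed for the lemma as stated.
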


Since $\lambda_M(X)\leq r(M)$ for all $X\subseteq E(M)$, we have that
$bw(M)\leq r(M)+1$.  The next result \cite[Proposition 2.2]{hall},
which characterizes the matroids for which that inequality is strict,
follows easily from Theorem \ref{thm:tangle} and Lemma \ref{lem:ggw}.

\begin{lemma}\label{lem:hall}
  For a matroid $M$, we have $bw(M)\leq r(M)$ if and only if $E(M)$ is
  the union of three proper flats of $M$.
\end{lemma}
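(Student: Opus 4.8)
The plan is to pass from branch-width to tangles via Theorem~\ref{thm:tangle}. Since $\lambda_M(X)\le r(M)$ for every $X\subseteq E(M)$, every edge of a branch-decomposition has width at most $r(M)+1$, so $bw(M)\le r(M)+1$ always; hence $bw(M)\le r(M)$ if and only if $M$ has no tangle of order $r(M)+1$. Writing $r=r(M)$, I would therefore reduce the lemma to the equivalence: $M$ has a tangle of order $r+1$ if and only if $E(M)$ is not the union of three proper flats. I would run the substantive argument under the assumption $r\ge 2$, so that $k=r+1\ge 3$ and Lemma~\ref{lem:ggw} is available, and dispatch the cases $r\le 1$ separately using the stated criterion that $bw(M)=1$ exactly when every element is a loop or a coloop.

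For one direction, suppose $E(M)=F_1\cup F_2\cup F_3$ with each $F_i$ a proper flat, and suppose for contradiction that $M$ has a tangle $\mathcal{T}$ of order $r+1$. A flat of rank $r$ is all of $E(M)$, so each $F_i$ has rank at most $r-1<k-1$; by Lemma~\ref{lem:ggw}, every set of rank less than $k-1$ lies in $\mathcal{T}$, so $F_1,F_2,F_3\in\mathcal{T}$. But then $F_1\cup F_2\cup F_3=E(M)$ violates axiom (T3). Hence no such tangle exists, and so $bw(M)\le r$.

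For the other direction I would prove the contrapositive: if $E(M)$ is not a union of three proper flats, I exhibit a tangle of order $r+1$, namely the family $\mathcal{T}=\{X\subseteq E(M):r(X)<r\}$ of non-spanning sets. Axioms (T1) and (T2) follow directly from $\lambda_M(X)=r(X)+r(\overline{X})-r$: if $r(X)<r$ then $\lambda_M(X)\le r-1<k-1$, and if $\lambda_M(X)<k-1=r$ then $r(X)+r(\overline{X})\le 2r-1$, forcing $r(X)<r$ or $r(\overline{X})<r$, so that $X$ or $\overline{X}$ lies in $\mathcal{T}$. For (T3), if $X,Y,Z\in\mathcal{T}$ had union $E(M)$, then $\cl(X),\cl(Y),\cl(Z)$ would be flats of rank less than $r$, hence proper, whose union contains $X\cup Y\cup Z=E(M)$, contradicting the hypothesis. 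The remaining axiom (T4) requires $r(E(M)-e)=r$ for every $e$, i.e.\ that $M$ has no coloop; I would derive this from the hypothesis, since a coloop $e$ together with $r\ge 2$ yields the covering $E(M)=(E(M)-e)\cup\cl(\{e\})$ by the two proper flats $E(M)-e$ (a hyperplane) and $\cl(\{e\})$ (a rank-$1$ flat), contradicting that $E(M)$ is not a union of three proper flats.

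The conceptual heart of the argument is the pair of closure computations that identify axiom (T3) with the flat-covering condition; these are short once the tangle $\mathcal{T}$ is chosen to be the non-spanning sets. The main obstacle, and the step demanding the most care, is axiom (T4) together with the low-rank bookkeeping: one must observe that forbidding a union of three proper flats already forbids coloops once $r\ge 2$, so that the natural candidate tangle is valid precisely when it needs to be, and then verify the degenerate cases $r\le 1$ by hand, where Lemma~\ref{lem:ggw} no longer applies and the $bw(M)=1$ criterion must be invoked directly.
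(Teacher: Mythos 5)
Your route is exactly the derivation the paper has in mind: the paper gives no proof of Lemma~\ref{lem:hall} beyond the remark that it follows easily from Theorem~\ref{thm:tangle} and Lemma~\ref{lem:ggw} (citing Hall), and your argument is a correct fleshing-out of that remark in the main case. For $r(M)\geq 2$ everything works: a proper flat has rank at most $r(M)-1$, so Lemma~\ref{lem:ggw} forces all three covering flats into any tangle of order $r(M)+1$, contradicting (T3); conversely, the family of non-spanning sets satisfies (T1)--(T3), and your observation that a coloop $e$ would give the covering $E(M)=(E(M)-e)\cup\cl(\{e\})$ by two proper flats correctly extracts (T4) from the hypothesis when $r(M)\geq 2$.

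The gap is in the cases you defer. You assert that $r(M)\leq 1$ ``can be verified by hand'' using the criterion for $bw(M)=1$, but no such verification exists, because the statement as written is false there. Take $M$ of rank $1$ consisting of a coloop $e$ and a loop $f$. Every element is a loop or a coloop, so by the criterion you invoke, $bw(M)=1=r(M)$, hence $bw(M)\leq r(M)$; yet the only proper flat of $M$ is $\{f\}$, so $E(M)$ is not a union of three proper flats. Inside your own framework the breakdown is visible: the coloop kills (T4) for the non-spanning tangle, and your coloop-exclusion argument is unavailable because it needs $\cl(\{e\})$ to be a proper flat, which requires $r(M)\geq 2$; indeed this $M$ has no tangle of order $2$, yet no three proper flats cover $E(M)$. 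So a correct write-up must either add a hypothesis (rank at least $2$, or no coloops) or explicitly note the failure of the literal statement in this degenerate case; note that the offending direction of the lemma is only ever applied in the paper to $t$-expansions whose rank is at least $2$, so the paper's applications are unaffected. Claiming the low-rank cases go through by hand is the one genuine flaw in an otherwise correct reconstruction.
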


\subsection{Cyclic sets, cyclic flats,  and clones}

A cyclic set of a matroid $M$ is a (possibly empty) union of circuits
of $M$; equivalently, $X\subseteq E(M)$ is cyclic if $M|X$ has no
coloops.  A cyclic flat is a flat that is cyclic.  The cyclic flats of
$M$ form a lattice, which we denote by $\mathcal{Z}(M)$.  The join of
$A$ and $B$ in $\mathcal{Z}(M)$ is $\cl_M(A\cup B)$, as in the lattice
of flats; the meet of $A$ and $B$ in $\mathcal{Z}(M)$ is $(A\cap B)-C$
where $C$ is the set of coloops of $M|A\cap B$.  We have
\begin{equation}\label{eq:rkviacyc}
  r(X) = \min\{r(A)+|X-A|\,:\,A\in\mathcal{Z}(M)\}
\end{equation}
for all subsets $X$ of $E(M)$.  Indeed,
$r(X) \leq r(X\cap A)+|X-A| \leq r(A)+|X-A|$ for all subsets $X$ and
$A$ of $E(M)$, and among the sets $A$ for which equality holds are the
closure of the union of the circuits of $M|X$, which is a cyclic flat,
and the cyclic flat $\cl(X)-C$ where $C$ is the set of coloops of
$M|\cl(X)$.

Thus, a matroid is determined by its cyclic flats along with the rank
of each cyclic flat.  The following characterization of matroids from
this perspective, which we will use when we define the $t$-expansion
of a matroid, is from \cite{sims, cycflats}.

\begin{thm}\label{thm:axioms}
  For a collection $\mathcal{Z}$ of subsets of a set $E$ and a
  function $r:\mathcal{Z}\to \mathbb{Z}$, there is a matroid $M$ on
  $E$ with $\mathcal{Z}(M)=\mathcal{Z}$ and $r_M(X) =r(X)$ for all
  $X\in\mathcal{Z}$ if and only if
  \begin{itemize}
  \item[(Z0)] $(\mathcal{Z},\subseteq)$ is a lattice,
  \item[(Z1)] $r(0_{\mathcal{Z}})=0$, where $0_{\mathcal{Z}}$ is the
    least set in $\mathcal{Z}$,
  \item[(Z2)] $0<r(Y)-r(X)<|Y-X|$ for all sets $X,Y$ in $\mathcal{Z}$
    with $X\subset Y$, and
  \item[(Z3)] for all pairs of sets $X,Y$ in $\mathcal{Z}$ (or,
    equivalently, just incomparable sets in $\mathcal{Z}$),
    $$r(X\join Y) + r(X\meet Y) + |(X\cap Y) - (X\meet Y)|\leq
    r(X)+r(Y).$$
  \end{itemize}
\end{thm}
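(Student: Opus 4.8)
The plan is to build everything around the rank function $\rho(S)=\min\{r(A)+|S\del A|:A\in\mathcal{Z}\}$ suggested by \eqref{eq:rkviacyc}. In the ``only if'' direction, formula \eqref{eq:rkviacyc} already guarantees that $r_M$ agrees with this $\rho$, so the real content is verifying (Z0)--(Z3); in the ``if'' direction I would take $\rho$ as the \emph{definition} of a candidate rank function and show that it produces a matroid whose cyclic flats and ranks are exactly the prescribed data.

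For necessity, suppose $\mathcal{Z}=\mathcal{Z}(M)$ and $r=r_M$ restricted to $\mathcal{Z}$. Condition (Z0) is the known fact that the cyclic flats form a lattice, and (Z1) holds because $0_{\mathcal{Z}}$ is the set of loops. For (Z2) with $X\subset Y$ in $\mathcal{Z}$, the bound $r(Y)>r(X)$ is immediate since $X$ is a flat strictly inside $Y$, while $r(Y)-r(X)<|Y\del X|$ follows because $Y$ is cyclic: every element of $Y\del X$ lies on a circuit of $M/X$ inside $Y\del X$, so $Y\del X$ is dependent in $M/X$. The key observation for (Z3) is that it is submodularity in disguise: deleting the coloops of $M|(X\cap Y)$ gives $r(X\meet Y)+|(X\cap Y)\del(X\meet Y)|=r(X\cap Y)$, and $r(X\join Y)=r(X\cup Y)$, so (Z3) is exactly $r(X\cup Y)+r(X\cap Y)\le r(X)+r(Y)$.

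For sufficiency I would define $\rho$ as above and run four checks. That $\rho$ is a rank function is routine apart from submodularity: $0\le\rho(S)\le|S|$, monotonicity, and the unit-increase bound all follow by testing a minimizer for $S$ or the set $0_{\mathcal{Z}}$. Submodularity is the crux; given minimizers $A$ for $S$ and $B$ for $T$, I would use $A\join B$ as a test set for $S\cup T$ and $A\meet B$ for $S\cap T$, apply (Z3) to bound $r(A\join B)+r(A\meet B)$, and reduce to the purely set-theoretic inequality $$|(S\cup T)\del(A\join B)|+|(S\cap T)\del(A\meet B)|\le|S\del A|+|T\del B|+|(A\cap B)\del(A\meet B)|,$$ which I would verify element by element using $A\cup B\subseteq A\join B$ and $A\meet B\subseteq A\cap B$. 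Separately I would prove the key lemma that $r(A)-r(B)<|A\del B|$ for all distinct $A,B\in\mathcal{Z}$ (the case $A\subseteq B$ being immediate): here (Z3) together with $r(A\join B)\ge r(A)$ gives $|(A\cap B)\del(A\meet B)|\le r(B)-r(A\meet B)$, while (Z2) on the chain $A\meet B\subset A$ gives $r(A)-r(A\meet B)<|A\del B|+|(A\cap B)\del(A\meet B)|$, and substituting yields the bound. This lemma gives $\rho(A)=r(A)$ for $A\in\mathcal{Z}$, and then shows each such $A$ is a flat (for $e\notin A$ every candidate $B$ yields $r(B)+|(A\cup e)\del B|\ge r(A)+1$) and cyclic (for $e\in A$ every candidate yields $r(B)+|(A\del e)\del B|\ge r(A)$, so $\rho(A\del e)=\rho(A)$). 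Finally, for any cyclic flat $F$ of the resulting matroid $M$ I would pick a minimizer $B^*$ with $\rho(F)=r(B^*)+|F\del B^*|$; flatness of $F$ forces $B^*\subseteq F$ and cyclicity of $F$ forces $F\subseteq B^*$, so $F=B^*\in\mathcal{Z}$, giving $\mathcal{Z}(M)=\mathcal{Z}$.

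The hard part will be the submodularity verification, which hinges on the element-wise counting inequality above; the delicate point is that (Z3) must be applied with exactly the join and meet as test sets, so that its slack term $|(A\cap B)\del(A\meet B)|$ precisely absorbs the deficit from replacing $A,B$ by $A\join B,A\meet B$. The intertwined use of (Z2) and (Z3) in the key lemma is the other essential step, and isolating that lemma is what makes the flat and cyclic identifications in the last two checks fall out cleanly.
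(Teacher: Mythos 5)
This theorem is stated in the paper without proof; it is quoted from \cite{sims,cycflats}, so there is no in-paper argument to compare yours against. Your proposal is correct, and it is in essence the standard proof from the cited sources: necessity by recognizing (Z3) as submodularity in disguise (via $r(X\join Y)=r(X\cup Y)$ and $r(X\meet Y)+|(X\cap Y)\del(X\meet Y)|=r(X\cap Y)$, the meet being $X\cap Y$ minus the coloops of $M|X\cap Y$), and sufficiency by adopting $\rho(S)=\min\{r(A)+|S\del A|\,:\,A\in\mathcal{Z}\}$ as the candidate rank function. I checked the two steps you flag as the crux, and both close. The counting inequality holds element by element: the only nontrivial case is $x\in S\cap T$ with $x\in A\join B$ but $x\notin A\meet B$, where the trichotomy $x\notin A$, or $x\notin B$, or $x\in(A\cap B)\del(A\meet B)$ supplies the needed unit on the right-hand side. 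The key lemma $r(A)-r(B)<|A\del B|$ for distinct $A,B\in\mathcal{Z}$ follows exactly as you describe from (Z3) with $r(A\join B)\geq r(A)$ together with (Z2) applied to $A\meet B\subset A$, using that $A\del(A\meet B)$ is the disjoint union of $A\del B$ and $(A\cap B)\del(A\meet B)$; integrality then upgrades it to $r(B)+|A\del B|\geq r(A)+1$, which is what your flatness and cyclicity checks actually consume. The two terse claims in your final step also hold, each by a one-line argument worth writing out: if $B^*\not\subseteq F$, then $\rho(F\cup B^*)\leq r(B^*)+|F\del B^*|=\rho(F)$, contradicting $\rho(F\cup x)=\rho(F)+1$ for $x\in B^*\del F$ (flatness); and if $e\in F\del B^*$, then $\rho(F\del e)\leq r(B^*)+|F\del B^*|-1=\rho(F)-1$, making $e$ a coloop of $M|F$ (contradicting cyclicity).
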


Adaptations of the argument used to justify Equation
(\ref{eq:rkviacyc}) give the following lemma.

\begin{lemma}\label{lem:gencycrk}
  Let $M$ be a matroid. For any subset $X$ of $E(M)$,
  \begin{itemize}
  \item[(1)]
    $r(X) = \min\{r(A)+|X-A|\,:\,A\subseteq X \text{ and } A \text{ is
      a cyclic set}\}$, and
  \item[(2)] if
    $\mathcal{Z}(M)\subseteq \mathcal{Y}\subseteq 2^{E(M)}$, then
    $r(X) = \min\{r(A)+|X-A|\,:\,A\in\mathcal{Y}\}$.
  \end{itemize}
\end{lemma}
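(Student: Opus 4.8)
The plan is to mirror, for both parts, the two-step argument that justified Equation~(\ref{eq:rkviacyc}): first establish an inequality that holds for every competing set $A$, and then exhibit a single set in the relevant family that attains it. The common ingredient is the uniform bound already used there: for any $A\subseteq E(M)$ we have $r(X)\le r(X\cap A)+|X-A|\le r(A)+|X-A|$, the first step by subadditivity of the rank function and the second because $X\cap A\subseteq A$. Consequently $r(A)+|X-A|\ge r(X)$ for every $A$, so in both parts $r(X)$ is a lower bound for the minimum on the right-hand side; it remains only to produce a set in the prescribed family that achieves equality.

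For part~(1) the natural candidate is the set $A_0$ consisting of the union of all circuits of the restriction $M|X$. By construction $A_0$ is a cyclic set with $A_0\subseteq X$, and the elements of $X-A_0$ are precisely the coloops of $M|X$. Since the coloops form a separator of $M|X$, we have $r(X)=r_{M|X}(X)=r_{M|X}(A_0)+|X-A_0|=r(A_0)+|X-A_0|$, where the last equality uses $A_0\subseteq X$. Combined with the uniform bound, this gives the claimed equality, with the minimum attained at a cyclic subset of $X$.

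For part~(2) I would invoke Equation~(\ref{eq:rkviacyc}) directly: it supplies a cyclic flat $A_1\in\mathcal{Z}(M)$ with $r(X)=r(A_1)+|X-A_1|$. Since $\mathcal{Z}(M)\subseteq\mathcal{Y}$, we have $A_1\in\mathcal{Y}$, so the minimum over $\mathcal{Y}$ is at most $r(A_1)+|X-A_1|=r(X)$; together with the uniform lower bound this forces equality. The only point needing attention in either part is confirming that the chosen minimizer genuinely lies in the prescribed family---a cyclic subset of $X$ in part~(1), and a member of $\mathcal{Y}$ in part~(2)---but in both cases this is immediate once the right set is named, so I do not expect any real obstacle; the lemma is essentially a packaging of the two observations behind~(\ref{eq:rkviacyc}).
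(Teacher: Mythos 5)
Your proposal is correct and takes essentially the same approach as the paper, which proves this lemma only by remarking that adaptations of the argument for Equation~(\ref{eq:rkviacyc}) suffice: your uniform bound $r(X)\le r(X\cap A)+|X-A|\le r(A)+|X-A|$ is that argument's first half, and your witnesses---the union of the circuits of $M|X$ (a cyclic set inside $X$, whose complement in $X$ consists of the coloops of $M|X$) for part~(1), and a minimizing cyclic flat supplied by Equation~(\ref{eq:rkviacyc}) itself for part~(2)---are exactly the intended adaptations.
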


The cyclic sets of a matroid $M$ are the unions of circuits of $M$;
the flats of $M$ are the intersections of hyperplanes of $M$, so their
complements are the unions of cocircuits of $M$.  This gives the next
lemma, relating the cyclic flats of $M$ and those of its dual, $M^*$.

\begin{lemma}\label{lem:dualcy}
  For any matroid $M$, we have
  $\mathcal{Z}(M^*) = \{E(M)-A\,:\,A\in\mathcal{Z}(M)\}$.
\end{lemma}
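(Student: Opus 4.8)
The plan is to prove the two complementation statements separately—one matching flats of $M$ with cyclic sets of $M^*$, and the dual one matching cyclic sets of $M$ with flats of $M^*$—and then to intersect them. First I would record the two facts already set up in the paragraph preceding the lemma. Since the flats of $M$ are exactly the intersections of hyperplanes of $M$, a set $F$ is a flat of $M$ if and only if its complement $\overline{F}$ is a union of cocircuits of $M$. The cocircuits of $M$ are precisely the circuits of $M^*$, so $\overline{F}$ is a union of cocircuits of $M$ if and only if $\overline{F}$ is a union of circuits of $M^*$, that is, a cyclic set of $M^*$. Thus $F$ is a flat of $M$ if and only if $\overline{F}$ is a cyclic set of $M^*$.

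Next I would obtain the dual statement by applying the equivalence just proved to the matroid $M^*$ in place of $M$ and using $(M^*)^*=M$: a set $F$ is a flat of $M^*$ if and only if $\overline{F}$ is a cyclic set of $M$. Equivalently, a set $A$ is a cyclic set of $M$ if and only if $\overline{A}$ is a flat of $M^*$.

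Finally I would combine the two equivalences. A set $A$ lies in $\mathcal{Z}(M)$ exactly when $A$ is simultaneously a flat of $M$ and a cyclic set of $M$. By the first equivalence the former holds if and only if $\overline{A}$ is a cyclic set of $M^*$, and by the second equivalence the latter holds if and only if $\overline{A}$ is a flat of $M^*$. Hence $A\in\mathcal{Z}(M)$ if and only if $\overline{A}$ is both a flat and a cyclic set of $M^*$, i.e.\ if and only if $\overline{A}\in\mathcal{Z}(M^*)$. Taking complements of the members of $\mathcal{Z}(M)$ therefore yields exactly $\mathcal{Z}(M^*)$, which is the claim.

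There is no genuine obstacle here; the only point that requires care is the passage between ``complement of a flat'' and ``union of cocircuits,'' together with the standard fact that the circuits of $M^*$ are the cocircuits of $M$. Both are routine matroid-duality facts, and once they are invoked the remainder of the argument is a purely formal manipulation of complements, so I expect the proof to be short.
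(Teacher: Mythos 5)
Your proof is correct and follows essentially the same route as the paper, which derives the lemma from exactly the two duality facts you cite: cyclic sets are unions of circuits, and complements of flats are unions of cocircuits (i.e., circuits of $M^*$). Your write-up just makes explicit the symmetric application of these facts to $M$ and $M^*$ that the paper leaves implicit.
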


A \emph{connected flat} of a matroid $M$ is a flat $F$ of $M$ for
which the restriction $M|F$ is connected.  All connected flats $F$
with $|F|\geq 2$ are cyclic flats, but the converse is false.

Distinct elements $e$ and $f$ in $M$ are \emph{clones} if the
transposition $(e,f)$ on $E(M)$, which maps $e$ to $f$, and $f$ to
$e$, and fixes each element of $E(M)-\{e,f\}$, is an automorphism of
$M$.  It is easy to show that $e$ and $f$ are clones if and only if
they are in the same cyclic flats of $M$.  With that reformulation of
clones, it follows that the relation on $E(M)$ given by $e\sim f$ if
and only if either $e=f$ or $e$ and $f$ are clones in $M$ is an
equivalence relation.  The equivalence classes of this relation are
the \emph{clonal classes} of $M$.  A \emph{set of clones} of $M$ is a
subset of a clonal class of $M$.  It is well known and easy to prove
that if $e$ and $f$ are clones in $M$, then they are clones in each
minor of $M$ that they are in.

\subsection{Positroids}

A \emph{positroid} is a matroid $M$, say of rank $r$ and with
$|E(M)|=n$, for which some matrix representation of $M$ by an $r$ by
$n$ matrix over $\mathbb{R}$ has the property that each submatrix that
is obtained by deleting all but $r$ columns has nonnegative
determinant.  Implicit in a matrix representation of a matroid $M$ is
a linear order on $E(M)$ since the columns of a matrix are naturally
linearly ordered.  A linear order on $E(M)$ in which the $i$th element
corresponds to the $i$th column of a matrix of the type above is a
\emph{positroid order} for $M$.  The next theorem, proven in
\cite{pos}, characterizes positroid orders and hence positroids.  It
uses the following notion: a subset $A$ of $E$ is a \emph{cyclic
  interval} in a linear order on $E$ if either $A$ or $E-A$ is an
interval.

\begin{thm}\label{thm:char}
  Let $M$ be a matroid that has no loops.  A linear order on $E(M)$ is
  a positroid order if and only if, for each proper connected flat $F$
  of $M$ with $|F|\geq 2$ and each connected component $K$ of the
  contraction $M/F$ with $|E(K)|\geq 2$, the set $E(K)$ is a subset of
  a cyclic interval that is disjoint from $F$.
\end{thm}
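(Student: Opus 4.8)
The plan is to translate the determinantal definition of a positroid into the combinatorial description of its base polytope and then to match the resulting cyclic-interval requirement against the condition in the statement, which I abbreviate as (C). Associate with $M$ its base polytope $P(M)=\mathrm{conv}\{\mathbf{e}_B: B\text{ a basis of }M\}\subseteq\mathbb{R}^n$, where $n=|E(M)|$. The key external input, which I would cite rather than reprove \cite{post,ARW}, is that for a loopless matroid the given order is a positroid order exactly when $P(M)$ is cut out by the cyclic-interval inequalities; that is, when
$$P(M)=\Big\{x\in[0,1]^n:\sum_{i=1}^n x_i=r(M)\text{ and }\sum_{i\in I}x_i\le r(I)\text{ for every cyclic interval }I\subseteq[n]\Big\}.$$
Every cyclic-interval inequality is valid on $P(M)$, so writing $Q$ for the polytope on the right we always have $P(M)\subseteq Q$; thus the theorem reduces to showing that the reverse inclusion $Q\subseteq P(M)$ holds if and only if (C) holds. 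A point of $Q$ lies in $P(M)$ precisely when it satisfies every rank inequality $\sum_{i\in S}x_i\le r(S)$, and it suffices to test this against the equalities cutting out the affine hull of $P(M)$ together with the facet-defining inequalities.

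I would treat the affine hull first. Its defining equalities are $\sum_{i\in E(M_c)}x_i=r(M_c)$, one for each connected component $M_c$ of $M$. I would show that these are forced by the cyclic-interval constraints of $Q$ exactly when the partition of $[n]$ into components is non-crossing, and that this is precisely the instance of (C) obtained by taking $F$ to be one component (a proper connected flat) and $K$ a different component, now a connected component of $M/F$. Single-element components cause no trouble, since a coloop $e$ already has $x_e=1$ forced in $Q$ by the cyclic intervals $\{e\}$ and $[n]\setminus\{e\}$; this is also why (C) only needs the hypotheses $|F|\ge 2$ and $|E(K)|\ge 2$.

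For the facets I would use the standard fact that, inside a connected component, the inequality $\sum_{i\in S}x_i\le r(S)$ is facet-defining exactly when $M|S$ and $M/S$ are both connected, so $S$ is a connected flat whose contraction is connected. When $M/S$ is connected the matching instance of (C) takes $F=S$ and $K$ the single complementary component, and one checks that the inequality is implied by the cyclic-interval inequalities iff $S$ is cyclically an interval, i.e. iff $F$ and $E(K)$ do not cross. The crux—and the step I expect to be the main obstacle—is a connected flat $F$ whose contraction $M/F$ breaks into several components $K_1,\dots,K_m$: here $F$ itself is not a flacet, yet each $F\cup\bigcup_{j\ne t}E(K_j)$ is, and I would have to prove that imposing (C) for every $K_t$ is equivalent to requiring this whole family of flacet inequalities to be cyclic-interval-implied, and moreover that these flacets, together with the component equalities from the first step, generate all the facet inequalities of $P(M)$. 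The forward direction makes the obstruction concrete: a genuine crossing of $F$ with some $E(K_t)$ produces a flacet whose rank inequality the cyclic-interval inequalities cannot imply, hence an explicit point of $Q\setminus P(M)$ and a witness that the order is not a positroid order; the converse is the bookkeeping that (C) leaves no room for such a point. (Alternatively, one could prove sufficiency by directly building a totally nonnegative representation by induction on the lattice of cyclic flats, gluing Schubert-matroid representations along the cyclic intervals supplied by (C), but the polytope route keeps both directions uniform.)
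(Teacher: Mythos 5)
First, a structural point: the paper does not prove Theorem~\ref{thm:char} at all. It is quoted as background, with the proof deferred to the cited preprint \cite{pos}. So there is no in-paper argument to compare yours against; your proposal has to stand on its own as a proof of the cited result, and as written it does not.

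Your overall strategy is legitimate: the external input (for a loopless matroid with a fixed order, being a positroid is equivalent to the base polytope $P(M)$ being cut out by the cyclic-interval rank inequalities) is a real theorem of Lam--Postnikov and Ardila--Rinc\'on--Williams, and the facet description of matroid base polytopes by ``flacets'' (sets $S$ with $M|S$ and $M/S$ connected), box constraints, and component equalities is also citable. But everything after those citations---the actual equivalence of condition (C) with ``every facet inequality of $P(M)$ is valid on $Q$''---is where the entire content of the theorem lives, and your proposal defers it wholesale: you yourself flag it as ``the main obstacle'' and then describe only what you ``would have to prove.'' Concretely: (i) you assert without proof that each $S_t=F\cup\bigcup_{j\ne t}E(K_j)$ is a flacet; the isomorphism $M/S_t\cong K_t$ is clear, but connectivity of $M|S_t$ genuinely needs an argument (it follows from connectivity of $M$ by showing a separator of $M|S_t$ avoiding $F$ would be a separator of $M$, but no such argument appears). (ii) For necessity you claim a crossing ``produces an explicit point of $Q\setminus P(M)$'' but construct nothing; the clean route is instead the polyhedral fact that a facet-defining inequality must occur, up to positive scaling and adding multiples of the affine-hull equalities, in every defining inequality system, which forces each flacet to be a cyclic interval---a different argument from the one you sketch. (iii) For sufficiency, ``bookkeeping'' is not accurate when $M$ is disconnected (and the theorem covers disconnected $M$): a flacet $F$ of a connected component $M_c$ need not be a cyclic interval of $E(M)$ at all, since (C) only places $E(M_c)\setminus F$ inside a cyclic interval disjoint from $F$; its inequality is then implied only by combining cyclic-interval inequalities with the component equalities, and those equalities must themselves first be derived from the pairwise non-crossing structure of the components, by an induction you do not supply. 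None of this shows the strategy is unworkable---it can be completed---but the proposal is an outline whose central equivalences are exactly the missing part, so it is not yet a proof.
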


The property in Theorem \ref{thm:char} is the \emph{cyclic interval
  property}.

\subsection{Matroid unions}

The theory of submodular functions (see \cite[Chapter 11]{oxley})
yields the following well-known results about matroid union.  Let
$M_1, M_2, \ldots, M_k$ be matroids, all on the same set $E$.  Their
\emph{matroid union}, denoted $M_1\vee M_2\vee \cdots\vee M_k$, is
also defined on $E$ and is given by the rank function $r$ where
\begin{equation}\label{eq:rankunion}
r(X) = \min\biggl\{\biggl(\sum_{i=1}^kr_{M_i}(Y)\biggr)
+|X-Y|\,:\,Y\subseteq X\biggr\}
\end{equation}
for all $X\subseteq E$.  The independent sets of
$M_1\vee M_2\vee \cdots\vee M_k$ are the sets of the form
$I_1\cup I_2\cup\cdots\cup I_k$ where $I_j$ is independent in $M_j$.

It is well known that a matroid is transversal if and only if it is a
union of matroids of rank $1$.

\section{The $t$-expansion of a matroid}\label{section:texpansion}

From Theorem \ref{thm:axioms}, it is easy to check that the definition
below indeed defines a matroid.  The resulting matroid mimics the
structure of $M$, but with the size and rank of each cyclic flat
magnified by a factor of $t$.

\begin{dfn}\label{def:kexp}
  Fix a matroid $M$ and $t\in\mathbb{N}$.  For each $e\in E(M)$, let
  $S_e$ be a $t$-element set with $e\in S_e$ and
  $S_e\cap S_f=\emptyset$ whenever $e\ne f$.  For
  $X=\{e_1,e_2,\ldots,e_k\}\subseteq E(M)$, let
  $S_X=S_{e_1}\cup S_{e_2}\cup \cdots\cup S_{e_k}$.  The
  \emph{$t$-expansion} of $M$ is the matroid $M^t$ on $S_{E(M)}$ for
  which $\mathcal{Z}(M^t)=\{S_A \,:\, A\in \mathcal{Z}(M)\}$ and
  $r_{M^t}(S_A)=t\cdot r_M(A)$ for $S_A\in\mathcal{Z}(M^t)$.
\end{dfn}

The notation $S_e$, $S_X$, and $M^t$ is used throughout the rest of
this paper.

Note that any two distinct elements of any set $S_e$ are clones of
$M^t$.  Thus, each clonal class of $M^t$ is $S_X$ for some
$X\subseteq E(M)$.  Moreover, $S_X$ is a clonal class of $M^t$ if and
only if $X$ is a clonal class of $M$.  The following result is also
immediate from the definition.

\begin{lemma}\label{lem:expandconfig}
  Fix $t\in\mathbb{N}$.  Matroids $M$ and $N$, neither of which have
  coloops, have the same configuration if and only if $M^t$ and $N^t$
  have the same configuration.
\end{lemma}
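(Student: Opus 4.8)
The plan is to prove the equivalence by showing that the configuration of $M^t$ is completely determined by, and determines, the configuration of $M$, via the explicit size-and-rank scaling in Definition \ref{def:kexp}. Recall that the configuration of a coloopless matroid is the abstract lattice $\mathcal{Z}(M)$ together with the pair $(|A|, r_M(A))$ attached to each element $A\in\mathcal{Z}(M)$. So I would first record that by construction $\mathcal{Z}(M^t)=\{S_A : A\in\mathcal{Z}(M)\}$, and that the map $A\mapsto S_A$ is an order-preserving bijection between $\mathcal{Z}(M)$ and $\mathcal{Z}(M^t)$; since it preserves $\subseteq$ in both directions, it is a lattice isomorphism. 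Hence $\mathcal{Z}(M^t)$ and $\mathcal{Z}(M)$ are isomorphic as abstract lattices.

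Next I would track the labels. For each $A\in\mathcal{Z}(M)$ the corresponding cyclic flat $S_A$ of $M^t$ satisfies $|S_A| = t\cdot|A|$ (since the sets $S_e$ are pairwise disjoint $t$-element sets) and $r_{M^t}(S_A) = t\cdot r_M(A)$ by definition. Thus under the lattice isomorphism $A\mapsto S_A$, the label $(|A|, r_M(A))$ on an element of the configuration of $M$ is sent to the label $(t|A|, t\,r_M(A))$ on the corresponding element of the configuration of $M^t$. In particular, the configuration of $M^t$ is obtained from that of $M$ by the fixed, invertible rescaling $(s,k)\mapsto(ts,tk)$ of every label, and this transformation depends only on $t$, not on $M$.

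For the forward direction, if $M$ and $N$ have the same configuration, then there is a lattice isomorphism $\mathcal{Z}(M)\to\mathcal{Z}(N)$ matching the $(s,k)$ labels; composing with the two scaling bijections $A\mapsto S_A$ gives a lattice isomorphism $\mathcal{Z}(M^t)\to\mathcal{Z}(N^t)$ that matches labels, since corresponding labels $(s,k)$ and $(s,k)$ become $(ts,tk)$ and $(ts,tk)$. So $M^t$ and $N^t$ share a configuration. Conversely, because the scaling $(s,k)\mapsto(ts,tk)$ is injective for fixed $t\geq 1$, a configuration isomorphism $\mathcal{Z}(M^t)\to\mathcal{Z}(N^t)$ can be pulled back through the inverse bijections to recover a label-preserving lattice isomorphism $\mathcal{Z}(M)\to\mathcal{Z}(N)$; here one divides each matched label by $t$. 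This yields the same configuration for $M$ and $N$.

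The only point requiring care, and the main obstacle, is the coloop hypothesis. Eberhardt's configuration is an invariant of coloopless matroids, so I must confirm that $M^t$ has no coloops when $M$ has none, and that the reverse transfer is also legitimate. An element of $M^t$ lies in some $S_e$, and since any two elements of $S_e$ are clones lying in exactly the same cyclic flats, no element of $S_e$ can be a coloop unless $e$ is a coloop of $M$ (a coloop is precisely an element in no cyclic flat other than forced ones); thus $M$ coloopless forces $M^t$ coloopless, and conversely. I would state this briefly and then invoke the label-scaling bijection above to finish. The argument is essentially bookkeeping once the lattice isomorphism $A\mapsto S_A$ and the multiplicative effect on labels are isolated, so the proof is short.
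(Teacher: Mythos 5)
Your proof is correct, and it is essentially the paper's own argument: the paper offers no written proof at all, declaring the lemma ``immediate from the definition,'' and what you have written is precisely that immediate argument spelled out --- the bijection $A\mapsto S_A$ is a label-scaling lattice isomorphism $\mathcal{Z}(M)\to\mathcal{Z}(M^t)$, the scaling $(s,k)\mapsto(ts,tk)$ is invertible, and coloops are preserved and reflected by $t$-expansion. No gaps; your care about the coloop hypothesis (so that the configuration of $M^t$ is well-defined) is a worthwhile detail the paper leaves implicit.
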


\begin{lemma}\label{lem:kexpiso}
  Fix $t\in\mathbb{N}$.  The matroid $M$ is determined, up to
  isomorphism, by $M^t$.  Thus, $M^t$ and $N^t$ are isomorphic if and
  only if $M$ and $N$ are isomorphic.
\end{lemma}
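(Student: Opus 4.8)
\textbf{The plan} is to show that the isomorphism type of $M$ can be recovered from $M^t$ by an explicit ``division by $t$'' on the cyclic-flat data, and then read off the stated equivalence. The easy direction, that $M\cong N$ implies $M^t\cong N^t$, I would dispose of first: an isomorphism $\sigma\colon M\to N$ lets me choose, for each $e\in E(M)$, a bijection $S_e\to S_{\sigma(e)}$, and the resulting bijection $S_{E(M)}\to S_{E(N)}$ carries $\{S_A:A\in\mathcal{Z}(M)\}$ onto $\{S_B:B\in\mathcal{Z}(N)\}$ while preserving the ranks $t\cdot r_M(A)$, so by Definition \ref{def:kexp} it is an isomorphism $M^t\to N^t$. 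The substantive direction is to reconstruct the isomorphism type of $M$ from $M^t$; since any such reconstruction is invariant under isomorphism of its input, it forces $M\cong N$ whenever $M^t\cong N^t$.

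For the reconstruction, set $N=M^t$ and recall from Definition \ref{def:kexp} and the remarks after it that $\mathcal{Z}(N)=\{S_A:A\in\mathcal{Z}(M)\}$ with $r_N(S_A)=t\cdot r_M(A)$, that the clonal classes of $N$ are exactly the sets $S_X$ with $X$ a clonal class of $M$, and, because clones lie in the same cyclic flats, that each cyclic flat of $N$ is a union of clonal classes of $N$. Consequently every clonal class of $N$ has size divisible by $t$ (each is some $S_X$ with $|S_X|=t|X|$), and every cyclic-flat size and rank of $N$ is divisible by $t$. I would then list the clonal classes $C_1,\dots,C_k$ of $N$, choose within each $C_i$ a representative subset $D_i$ of size $|C_i|/t$, and set $E'=D_1\cup\cdots\cup D_k$. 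For each cyclic flat $F$ of $N$, say $F=\bigcup_{i\in I}C_i$, put $F'=\bigcup_{i\in I}D_i$ and $r'(F')=r_N(F)/t$, and let $\mathcal{Z}'=\{F':F\in\mathcal{Z}(N)\}$. The map $F\mapsto F'$ is an order isomorphism, hence a lattice isomorphism, from $\mathcal{Z}(N)$ onto $\mathcal{Z}'$.

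The key step is to verify that $(\mathcal{Z}',r')$ satisfies the axioms (Z0)--(Z3) of Theorem \ref{thm:axioms}, so that it is the cyclic-flat data of a matroid $M'$ on $E'$. Each axiom should follow by dividing the corresponding relation for $N$ by the positive integer $t$: (Z0) holds since $F\mapsto F'$ is a lattice isomorphism; (Z1) since $r'(0_{\mathcal{Z}'})=r_N(0_{\mathcal{Z}(N)})/t=0$; and for (Z2) and (Z3) one uses that, because cyclic flats of $N$ are unions of whole clonal classes, every cardinality that appears (such as $|G-F|$ and $|(F\cap G)-(F\meet G)|$) scales by exactly $t$ in passing from $\mathcal{Z}(N)$ to $\mathcal{Z}'$, as do the ranks, so dividing the strict inequalities for $N$ by $t$ yields the required inequalities, with integrality of $r'$ guaranteed by the divisibility noted above. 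Finally I would fix a bijection $\beta\colon E'\to E(M)$ sending each $D_i$ bijectively onto the corresponding clonal class $X_i$ of $M$ (possible since $|D_i|=|X_i|$); then $\beta$ carries $\mathcal{Z}'$ onto $\mathcal{Z}(M)$ and $r'$ onto $r_M$ exactly, so the uniqueness part of Theorem \ref{thm:axioms} gives $M'\cong M$. Since $M'$ is built from $N=M^t$ using only its clonal classes, cyclic flats, and ranks, all preserved by isomorphism, and since the choices of the $D_i$ and of $\beta$ affect $M'$ only up to isomorphism, the isomorphism type of $M$ is determined by $M^t$, whence $M^t\cong N^t$ forces $M\cong N$.

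The main obstacle I anticipate is the bookkeeping in (Z3): one must confirm that set intersections and lattice meets of cyclic flats of $N$ correspond, under $F\mapsto F'$, to the analogous operations in $\mathcal{Z}'$ with all cardinalities scaling uniformly by $t$. This rests entirely on the fact that cyclic flats of $N$ are unions of complete clonal classes; once the divisibility-by-$t$ of all sizes and ranks is secured, the remaining verifications are routine.
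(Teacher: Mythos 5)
Your proposal is correct and takes essentially the same approach as the paper: both reconstruct $M$ from $M^t$ by selecting a $1/t$-fraction of each clonal class, intersecting the cyclic flats with that selection, dividing ranks by $t$, and invoking Theorem \ref{thm:axioms}. The only organizational difference is that the paper first argues that any two such reconstructions are isomorphic via a clone-swapping bijection and then notes that one particular choice recovers $M$ itself, whereas you map each reconstruction onto $(\mathcal{Z}(M),r_M)$ directly via the bijection $\beta$ (which also makes your separate verification of (Z0)--(Z3) redundant, since $\beta$ transports the axioms from $M$).
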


\begin{proof}
  The second assertion follows from the first, so we focus on that.
  As noted above, each clonal class of $M^t$ is $S_X$ for some
  $X\subseteq E(M)$, and $S_X$ is a clonal class of $M^t$ if and only
  if $X$ is a clonal class of $M$.  From $M^t$, we know its clonal
  classes, and each has size $mt$ for some $m\in \mathbb{N}$.  From
  each clonal class $C$, say of size $mt$, pick any $m$ elements, and
  let $E_1$ be the set of all $|E(M)|$ such elements chosen.  Let
  $\mathcal{Z}_1 = \{E_1\cap A\,:\,A\in\mathcal{Z}(M^k)\}$ and let
  $r_1(E_1\cap A) = \frac{1}{t}\cdot r_{M^t}(A)$.  It is routine to
  check the conditions in Theorem \ref{thm:axioms} for the pair
  $(\mathcal{Z}_1,r_1)$, so this pair yields a matroid $M_1$ on $E_1$.
  For any two such matroids $M_1$ on $E_1$ and $M_2$ on $E_2$, there
  is a bijection $\phi:E_1\to E_2$ for which, for each $e\in E_1$, the
  elements $e$ and $\phi(e)$ are either equal or are clones in $M^t$.
  It follows that $\phi$ is an isomorphism.  Since one choice of $E_1$
  is $E(M)$, and in that case $r_1(A)=r_M(A)$ for all
  $A\in \mathcal{Z}(M)$, all matroids $M_1$ are isomorphic to $M$,
  which proves the result.
\end{proof}

The relation between $r_M(X)$ and $r_{M^t}(S_X)$ when
$X\in \mathcal{Z}(M)$ applies for all subsets $X$ of $E(M)$, as we
show next.

\begin{lemma}\label{lem:rank}
  If $X\subseteq E(M)$, then $r_{M^t}(S_X)=t\cdot r(X)$.
\end{lemma}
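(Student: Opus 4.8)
The plan is to apply the cyclic-flat rank formula, Equation (\ref{eq:rkviacyc}), directly in the matroid $M^t$ to the set $S_X$, and then reduce everything back to the corresponding formula in $M$. By Definition \ref{def:kexp}, the cyclic flats of $M^t$ are exactly the sets $S_A$ with $A\in\mathcal{Z}(M)$, and $r_{M^t}(S_A)=t\cdot r_M(A)$ for each such $A$. So, applying Equation (\ref{eq:rkviacyc}) to $M^t$ and the subset $S_X$ of $E(M^t)=S_{E(M)}$, I would write
$$r_{M^t}(S_X)=\min\{r_{M^t}(S_A)+|S_X-S_A|\,:\,A\in\mathcal{Z}(M)\}=\min\{t\cdot r_M(A)+|S_X-S_A|\,:\,A\in\mathcal{Z}(M)\}.$$

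The key computation is to evaluate $|S_X-S_A|$. Since the blocks $S_e$ are pairwise disjoint, for $e\in X\cap A$ we have $S_e\subseteq S_A$, while for $e\in X-A$ the set $S_e$ is disjoint from $S_A$; hence $S_X-S_A=\bigcup_{e\in X-A}S_e$, which has size $t\cdot|X-A|$ because each $S_e$ has exactly $t$ elements. I expect this bookkeeping step, though routine, to be the only place that genuinely uses the disjointness and the common size of the blocks $S_e$, so it is where a little care is needed; everything else is a substitution.

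Substituting $|S_X-S_A|=t\cdot|X-A|$ into the displayed minimum and factoring out $t$ then yields
$$r_{M^t}(S_X)=t\cdot\min\{r_M(A)+|X-A|\,:\,A\in\mathcal{Z}(M)\}.$$
Finally, recognizing the inner minimum as the right-hand side of Equation (\ref{eq:rkviacyc}) for $M$ and the set $X$ identifies it with $r_M(X)$, giving $r_{M^t}(S_X)=t\cdot r(X)$, as claimed. (Alternatively, one could invoke Lemma \ref{lem:gencycrk}(2) in $M^t$ in place of Equation (\ref{eq:rkviacyc}), but since $\mathcal{Z}(M^t)$ is known exactly, the direct appeal to Equation (\ref{eq:rkviacyc}) is the cleanest route.)
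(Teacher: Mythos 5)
Your proposal is correct and follows essentially the same route as the paper's own proof: apply Equation (\ref{eq:rkviacyc}) in $M^t$, substitute $r_{M^t}(S_A)=t\cdot r_M(A)$ and $|S_X-S_A|=t\cdot|X-A|$, factor out $t$, and recognize Equation (\ref{eq:rkviacyc}) for $M$. Your explicit justification that $S_X-S_A=S_{X-A}$ via disjointness of the blocks is a detail the paper leaves implicit, but the argument is identical in substance.
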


\begin{proof}
  By Equation (\ref{eq:rkviacyc}) and Definition \ref{def:kexp}, we
  have
  \begin{align*}
    r_{M^t}(S_X)=
    &\, \min\{r(S_Z)+|S_X- S_Z|:Z\in\mathcal{Z}(M)\}\\
    =&\,\min\{t\cdot  r(Z)+t|X- Z|:Z\in\mathcal{Z}(M) \}\\
    =&\, t\cdot \min\{r(Z)+|X- Z|:
       Z\in\mathcal{Z}(M)\}\\
    =&\, t\cdot r(X).\qedhere
  \end{align*}
\end{proof}

Routine arguments show that the operation of $t$-expansion commutes
with taking direct sums and free products.  (For the definition of
free products and the relevant background, see
\cite{free1,free2,oxley}.)  As we show next, the same is true for
taking duals.

\begin{lemma}\label{lem:dual}
  For any matroid $M$, we have $(M^*)^t=(M^t)^*$.  Thus, if $M$ is
  self-dual, so is $M^t$.
\end{lemma}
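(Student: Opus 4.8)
The plan is to show that the two matroids $(M^*)^t$ and $(M^t)^*$ coincide by verifying that they have the same ground set, the same cyclic flats, and the same rank on each cyclic flat; by Theorem \ref{thm:axioms} (or simply by Brylawski's observation that a matroid is determined by its cyclic flats and their ranks) this suffices. Both matroids live on the same ground set $S_{E(M)}$, since $E(M^*)=E(M)$ and so the expansion replaces each element by the same block $S_e$.

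First I would compute the cyclic flats of $(M^t)^*$. By Lemma \ref{lem:dualcy} applied to $M^t$, we have $\mathcal{Z}\bigl((M^t)^*\bigr)=\{S_{E(M)}-Z:Z\in\mathcal{Z}(M^t)\}$. Since $\mathcal{Z}(M^t)=\{S_A:A\in\mathcal{Z}(M)\}$ and $S_{E(M)}-S_A=S_{E(M)-A}=S_{\overline{A}}$ (because the blocks $S_e$ partition $S_{E(M)}$), this set equals $\{S_{\overline{A}}:A\in\mathcal{Z}(M)\}$. On the other hand, Lemma \ref{lem:dualcy} applied to $M$ gives $\mathcal{Z}(M^*)=\{\overline{A}:A\in\mathcal{Z}(M)\}$, so by the definition of $t$-expansion, $\mathcal{Z}\bigl((M^*)^t\bigr)=\{S_{\overline{A}}:A\in\mathcal{Z}(M)\}$. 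Thus the two lattices of cyclic flats coincide as families of subsets of $S_{E(M)}$.

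It remains to check that the ranks agree on each common cyclic flat $S_{\overline{A}}$, where $A\in\mathcal{Z}(M)$. For $(M^*)^t$, the defining rank is $r_{(M^*)^t}(S_{\overline{A}})=t\cdot r_{M^*}(\overline{A})$. For $(M^t)^*$, the element $S_{\overline{A}}$ is the complement of the cyclic flat $S_A$ of $M^t$, and the rank of a set in the dual is governed by the standard duality identity $r_{N^*}(Y)=|Y|-r(N)+r_N(E(N)-Y)$. Applying this with $N=M^t$, $Y=S_{\overline{A}}$, and using $|S_{\overline{A}}|=t\,|\overline{A}|$, $r(M^t)=t\cdot r(M)$, and $r_{M^t}(S_A)=t\cdot r_M(A)$ (the latter from the definition, or from Lemma \ref{lem:rank}), I get $r_{(M^t)^*}(S_{\overline{A}})=t\bigl(|\overline{A}|-r(M)+r_M(A)\bigr)$. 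The same duality identity applied to $M$ gives $r_{M^*}(\overline{A})=|\overline{A}|-r(M)+r_M(A)$, so $r_{(M^*)^t}(S_{\overline{A}})=t\bigl(|\overline{A}|-r(M)+r_M(A)\bigr)$ as well. The two ranks match, completing the identification and yielding $(M^*)^t=(M^t)^*$.

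The steps are all routine bookkeeping, so there is no serious obstacle; the only point requiring a little care is the complementation identity $S_{E(M)}-S_A=S_{\overline{A}}$, which relies on the blocks $S_e$ being pairwise disjoint with union $S_{E(M)}$, exactly as guaranteed by Definition \ref{def:kexp}. One should also confirm at the outset that both $(M^*)^t$ and $(M^t)^*$ are genuinely matroids on $S_{E(M)}$ (the former by Definition \ref{def:kexp}, the latter as an ordinary dual), so that the cyclic-flats-plus-ranks data indeed determines each of them. The final sentence, that a self-dual $M$ yields a self-dual $M^t$, is then immediate: if $M\cong M^*$ then $M^t\cong(M^*)^t=(M^t)^*$, using Lemma \ref{lem:kexpiso} to transport the isomorphism through the expansion.
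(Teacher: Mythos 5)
Your proof is correct and takes essentially the same route as the paper's: both identify $\mathcal{Z}\bigl((M^*)^t\bigr)$ with $\mathcal{Z}\bigl((M^t)^*\bigr)$ via Lemma \ref{lem:dualcy} together with the block identity $S_{E(M)}-S_A=S_{\overline{A}}$, and then match ranks using the dual rank formula $r_{M^*}(X)=|X|+r_M(\overline{X})-r(M)$ scaled by the factor $t$. The only cosmetic difference is that the paper checks the rank agreement for all sets of the form $S_X$ rather than just the common cyclic flats, which is immaterial since the cyclic flats and their ranks already determine the matroid.
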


\begin{proof}
  By Definition \ref{def:kexp} and Lemma \ref{lem:dualcy},
  \begin{align*}
    \mathcal{Z}((M^*)^t)
    & =\{S_X\,:\,X\in\mathcal{Z}(M^*)\} \\
    &=\{S_{E(M)-Y}\,:\,Y\in\mathcal{Z}(M)\} \\
    &=\{S_{E(M)}-S_Y\,:\,S_Y\in\mathcal{Z}(M^t)\} \\
    & = \mathcal{Z}((M^t)^*).
  \end{align*}
  The result now follows since the rank of any set $S_X$ is the same
  in $(M^*)^t$ as in $(M^t)^*$:
  \begin{align*}
    r_{(M^*)^t}(S_X)
    & = t\cdot r_{M^*}(X) \\
    &= t\cdot (|X|  +r_M(E(M)-X)-r(M)) \\
    & = |S_X|+r_{M^t}(S_{E(M)}-S_X) -r(M^t)\\
    & = r_{(M^t)^*}(S_X).\qedhere
  \end{align*}
\end{proof}

Taking $t$-expansions commutes with certain minors, as we show next.

\begin{lemma}\label{lem:texpminors}
  If $X\subseteq E(M)$, then $M^t|S_X=(M|X)^t$ and
  $M^t/S_X=(M/X)^t$.
\end{lemma}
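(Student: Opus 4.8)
The plan is to prove the two equalities separately, deriving the contraction statement from the restriction statement by duality. Both $M^t\del S_{\overline X}=M^t|S_X$ and $(M|X)^t$ are matroids on the common ground set $S_X=\bigcup_{e\in X}S_e$, so it suffices to check that they have the same rank function; that is, that $r_{M^t}(Y)=r_{(M|X)^t}(Y)$ for every $Y\subseteq S_X$.

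First I would compute $r_{M^t}(Y)$ using Lemma \ref{lem:gencycrk}(2) applied to $M^t$ with the family $\mathcal{Y}=\{S_A\,:\,A\subseteq E(M)\}$, which satisfies $\mathcal{Z}(M^t)\subseteq\mathcal{Y}\subseteq 2^{S_{E(M)}}$. Together with Lemma \ref{lem:rank} this gives $r_{M^t}(Y)=\min\{t\cdot r_M(A)+|Y\del S_A|\,:\,A\subseteq E(M)\}$. The crucial reduction is that, because $Y\subseteq S_X$ and the sets $S_e$ are pairwise disjoint, replacing $A$ by $A\cap X$ leaves $|Y\del S_A|$ unchanged (each element of $Y$ lies in the unique $S_e$ with $e\in X$, so its membership in $S_A$ depends only on $A\cap X$) while not increasing $r_M(A)$. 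Hence the minimum may be taken over $A\subseteq X$, giving $r_{M^t}(Y)=\min\{t\cdot r_M(A)+|Y\del S_A|\,:\,A\subseteq X\}$. Applying the same two lemmas to $(M|X)^t$ with the family $\{S_A\,:\,A\subseteq X\}$, and using $r_{M|X}(A)=r_M(A)$ for $A\subseteq X$, yields exactly the same expression for $r_{(M|X)^t}(Y)$. This proves $M^t|S_X=(M|X)^t$.

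For the contraction I would avoid a second rank computation and instead invoke duality. Writing $\overline{S_X}=S_{\overline X}$ and using the standard identity $N/Y=(N^*|\overline Y)^*$ together with Lemma \ref{lem:dual}, we have $M^t/S_X=\bigl((M^t)^*|S_{\overline X}\bigr)^*=\bigl((M^*)^t|S_{\overline X}\bigr)^*$. The restriction case, applied to $M^*$ and the set $\overline X$, gives $(M^*)^t|S_{\overline X}=(M^*\del X)^t$, so $M^t/S_X=\bigl((M^*\del X)^t\bigr)^*$. A final application of Lemma \ref{lem:dual} moves the dual inside the $t$-expansion, and the identity $(M^*\del X)^*=M/X$ finishes the argument.

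The main obstacle is the restriction rank computation, specifically the reduction of the minimization from subsets $A$ of $E(M)$ to subsets of $X$. Everything else is either a direct appeal to the cyclic-flat rank formula of Lemma \ref{lem:gencycrk} or routine manipulation of duals; once the rank functions are shown to agree on all $Y\subseteq S_X$, the first equality is immediate, and the contraction equality follows formally.
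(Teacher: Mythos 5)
Your proof is correct, and it shares the paper's overall skeleton---restriction is handled by showing the two matroids on $S_X$ have the same rank function via Lemma \ref{lem:gencycrk}(2) and Lemma \ref{lem:rank}, contraction by duality through Lemma \ref{lem:dual}---but the mechanism for the key restriction step is genuinely different. The paper argues structurally: the elements of each $S_e$ are clones of $M^t$, clones persist in restrictions, so every cyclic flat of $M^t|S_X$ is $S_A$ for some $A\subseteq X$; Lemma \ref{lem:gencycrk} then applies to both $M^t|S_X$ and $(M|X)^t$ with the single family $\{S_A\,:\,A\subseteq X\}$, on which the ranks visibly agree. You never examine the cyclic flats of the restriction at all: you expand $r_{M^t}(Y)$ over the larger family $\{S_A\,:\,A\subseteq E(M)\}$ and then show the minimization collapses to $A\subseteq X$, since replacing $A$ by $A\cap X$ leaves $|Y-S_A|$ unchanged (as $Y\subseteq S_X$ and the $S_e$ are pairwise disjoint) while not increasing $t\cdot r_M(A)$; this reduction is valid. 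What your route buys is independence from the clone machinery---it is pure rank bookkeeping---at the cost of an extra minimization argument; the paper's clone observation is shorter and is in any case a tool it reuses repeatedly (e.g., in Lemma \ref{lem:flatstructure1} and Theorem \ref{thm:texpviaunion}). Your explicit chain $M^t/S_X=\bigl((M^t)^*|S_{\overline{X}}\bigr)^*=\bigl((M^*)^t|S_{\overline{X}}\bigr)^*=\bigl((M^*\del X)^t\bigr)^*=(M/X)^t$ is exactly what the paper compresses into the single sentence that duality relates deletion and contraction.
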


\begin{proof}
  Duality relates deletion and contraction, so by Lemma
  \ref{lem:dual}, it suffices to prove that $M^t|S_X=(M|X)^t$.  All
  elements of any set $S_e$ are clones in $M^t$, so they are clones in
  any restriction that contains them, and so any cyclic flat of
  $M^t|S_X$ is $S_A$ for some $A\subseteq X$.  The lemma now follows
  from Lemma \ref{lem:gencycrk} since
  $r_{M^t|S_X}(S_A)=t\cdot r_M(A)=r_{(M|X)^t}(S_A)$.
\end{proof}

The next four lemmas relate the flats of $M$ and those of $M^t$.
  
\begin{lemma}\label{lem:flatstructure1}
  For $A\subseteq E(M)$ and $e\in A$, the following statements are
  equivalent:
  \begin{enumerate}
  \item $e$ is a coloop of $M|A$,
  \item all elements of $S_e$ are coloops of $M^t|S_A$,
  \item some element of $S_e$ is a coloop of $M^t|S_A$.
  \end{enumerate}
  Thus, $X\subseteq E(M)$ is cyclic in $M$ if and only if $S_X$ is
  cyclic in $M^t$.  Also, $F\subseteq E(M)$ is a flat of $M$ if and
  only if $S_F$ is a flat of $M^t$.
\end{lemma}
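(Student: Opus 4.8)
The plan is to reduce everything to the statement about coloops and then argue through cyclic flats, using the basic fact that \emph{in any matroid an element is a coloop precisely when it lies in no cyclic flat}. (The non-coloops of a matroid form the union of its circuits, and the closure of that union is a cyclic flat containing every non-coloop and no coloop, since a coloop $g$ lies outside $\cl(E(M)-g)$; thus an element lies in some cyclic flat exactly when it is not a coloop.) First I would reduce to the case $A=E(M)$: by Lemma \ref{lem:texpminors} we have $M^t|S_A=(M|A)^t$, and statement (1) is just the full-ground-set case of the equivalence applied to the matroid $M|A$, while statements (2) and (3) concern coloops of $M^t|S_A=(M|A)^t$. So it suffices to prove, for a matroid $M$ and $e\in E(M)$, that $e$ is a coloop of $M$ if and only if all (equivalently, some) elements of $S_e$ are coloops of $M^t$.

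For this core equivalence I would use only the description of the cyclic flats of $M^t$. By Definition \ref{def:kexp} these are exactly the sets $S_B$ with $B\in\mathcal{Z}(M)$. Since the sets $S_f$ are pairwise disjoint and $S_B=\bigcup_{f\in B}S_f$, for any $x\in S_e$ and any $B\in\mathcal{Z}(M)$ we have $x\in S_B$ if and only if $e\in B$. Hence $x$ lies in some cyclic flat of $M^t$ if and only if $e$ lies in some cyclic flat of $M$, and this condition is independent of the choice of $x\in S_e$. Applying the coloop characterization to $M^t$ and to $M$ then gives, simultaneously for every $x\in S_e$, that $x$ is a coloop of $M^t$ if and only if $e$ is a coloop of $M$; this yields (1), (2), and (3) together. (The implication (2) $\Rightarrow$ (3) is immediate, and (3) $\Rightarrow$ (2) also follows directly from the fact that distinct elements of $S_e$ are clones of $M^t$ and that clones lie in the same cyclic flats.)

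The two consequences then follow quickly. A set $X\subseteq E(M)$ is cyclic in $M$ exactly when $M|X$ has no coloops; applying the equivalence with $A=X$ to each $e\in X$, this holds exactly when no element of any $S_e$, and hence no element of $S_X$, is a coloop of $M^t|S_X$, that is, exactly when $S_X$ is cyclic in $M^t$. For the flat statement I would invoke duality: as noted before Lemma \ref{lem:dualcy}, the complements of the flats of a matroid are exactly the cyclic sets of its dual. Thus $F$ is a flat of $M$ iff $E(M)-F$ is cyclic in $M^*$; by the cyclic statement just proved applied to $M^*$ this holds iff $S_{E(M)-F}=S_{E(M)}-S_F$ is cyclic in $(M^*)^t$; and by Lemma \ref{lem:dual} we have $(M^*)^t=(M^t)^*$, so the condition is that $S_{E(M)}-S_F$ is cyclic in $(M^t)^*$, which says precisely that $S_F$ is a flat of $M^t$.

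I expect the only real content to lie in the core equivalence, and specifically in the direction that a non-coloop of $M$ yields no coloops among the elements of $S_e$. In the cyclic-flat framing this is packaged into the coloop characterization together with the membership correspondence $x\in S_B \Leftrightarrow e\in B$, so the single observation needing care is that the set of non-coloops is itself a cyclic flat. A more computational alternative would evaluate $r_{M^t}(S_{E(M)}-x)$ using Equation (\ref{eq:rkviacyc}) and Lemma \ref{lem:rank}; there the crux becomes the sub-claim that if a cyclic flat $B$ with $e\notin B$ satisfies $r_M(B)+|E(M)-B|=r_M(E(M))$, then $e$ is a coloop of $M$, which follows from $r_M(E(M)-e)\le r_M(B)+|E(M)-B|-1=r_M(E(M))-1$.
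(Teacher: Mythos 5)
Your proof is correct, but it takes a genuinely different route from the paper's. The paper argues directly: items (2) and (3) are equivalent because $S_e$ is a set of clones, and (1) $\Leftrightarrow$ (2) follows from Lemma \ref{lem:rank}, since $e$ being a coloop of $M|A$ means $r_M(A-e)=r_M(A)-1$, which by $r_{M^t}(S_Y)=t\cdot r_M(Y)$ is equivalent to $r_{M^t}(S_A-S_e)=r_{M^t}(S_A)-t$, i.e., to every element of $S_e$ being a coloop of $M^t|S_A$; the flat statement is then deduced from the coloop equivalence via the characterization that $F$ is a flat of $M$ if and only if each $x\in E(M)-F$ is a coloop of $M|(F\cup x)$. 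You instead work straight from the defining data of $M^t$: after reducing to $A=E(M)$ via Lemma \ref{lem:texpminors}, you combine the (correct, standard) fact that an element is a coloop exactly when it lies in no cyclic flat with the membership correspondence $x\in S_B\Leftrightarrow e\in B$ for $B\in\mathcal{Z}(M)$, which Definition \ref{def:kexp} hands you; and you obtain the flat statement by dualizing, using that complements of flats are the cyclic sets of the dual together with $(M^*)^t=(M^t)^*$ from Lemma \ref{lem:dual}. There is no circularity in your dependencies, since Lemmas \ref{lem:rank}, \ref{lem:dual}, and \ref{lem:texpminors} all precede this lemma and none of their proofs rely on it. The paper's approach buys brevity and minimal machinery (essentially only Lemma \ref{lem:rank}); yours buys a purely lattice-theoretic core argument read off from the definition of $M^t$, plus an elegant reduction of the flat assertion to the cyclic assertion already proved, at the cost of invoking two more lemmas and the auxiliary fact that the set of non-coloops of a matroid is a cyclic flat.
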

  
\begin{proof}
  Items (2) and (3) are equivalent since $S_e$ is a set of clones of
  $M^t$.  The equivalence of items (1) and (2) follows from Lemma
  \ref{lem:rank}.  The statement about $F$ and $S_F$ follows since $F$
  is a flat of $M$ if and only if, for each $x\in E(M)-F$, the element
  $x$ is a coloop of $M|F\cup x$, and likewise for flats of $M^t$.
\end{proof}

From Lemma \ref{lem:rank}, a set $S_X$ is a separator of $M^t$ if and
only if $X$ is a separator of $M$.  Since each set $S_e$, for
$e\in E(M)$, is a set of clones, if $M$ has no coloops (so $M^t$ has
none), then each separator of $M^t$ is $S_X$ for some
$X\subseteq E(M)$.  These observations along with Lemmas
\ref{lem:texpminors} and \ref{lem:flatstructure1} yield the next
lemma.

\begin{lemma}\label{lem:conn}
  A matroid $M$ is connected if and only if $M^t$ is connected.  Also,
  the connected flats of $M^t$ with at least two elements are the sets
  $S_F$ where $F$ is a connected flat of $M$ with $|F|\geq 2$.
\end{lemma}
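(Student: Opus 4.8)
The plan is to deduce both assertions from the two facts recorded immediately before the lemma---that $S_X$ is a separator of $M^t$ if and only if $X$ is a separator of $M$, and that when $M$, equivalently $M^t$, has no loops or coloops every separator of $M^t$ has the form $S_X$---together with Lemmas~\ref{lem:texpminors} and~\ref{lem:flatstructure1}.

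For the connectivity equivalence, one direction is immediate: if $M$ is disconnected, then $M$ has a nontrivial separator $X$, so $S_X$ is a separator of $M^t$, and it is nontrivial because $\emptyset\ne X\ne E(M)$ forces $\emptyset\ne S_X\ne S_{E(M)}$; hence $M^t$ is disconnected. For the converse I would argue contrapositively with a short case analysis, and we may assume $|E(M)|\ge 2$. If $M$ has a loop or a coloop, then that element is a nontrivial separator of $M$, so $M$ is disconnected, and then $M^t$ is disconnected as well. Otherwise $M$ is loopless and coloopless, so every separator of $M^t$ is of the form $S_X$; thus if $M^t$ has a nontrivial separator $S_X$, then $X$ is a nontrivial separator of $M$ by the separator correspondence, and $M$ is disconnected.

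For the statement about connected flats I would use that every connected flat with at least two elements is a cyclic flat. Let $G$ be a connected flat of $M^t$ with $|G|\ge 2$. Then $G$ is a cyclic flat of $M^t$, so $G=S_A$ for some $A\in\mathcal{Z}(M)$, and by Lemma~\ref{lem:flatstructure1} the set $A$ is a flat of $M$. By Lemma~\ref{lem:texpminors} we have $M^t|S_A=(M|A)^t$, and since $G=S_A$ is connected, the connectivity equivalence applied to $M|A$ gives that $M|A$ is connected. Moreover $|A|\ge 2$: if instead $|A|=1$, then $A$ is a single loop and $G=S_A$ has $t=|G|\ge 2$ elements, all loops, so $M^t|S_A$ is disconnected, contradicting that $G$ is connected. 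Hence $A$ is a connected flat of $M$ with $|A|\ge 2$ and $G=S_A$. Conversely, if $F$ is a connected flat of $M$ with $|F|\ge 2$, then $F$ is a cyclic flat, so $S_F$ is a cyclic flat, and hence a flat, of $M^t$; by Lemma~\ref{lem:texpminors} and the connectivity equivalence, $M^t|S_F=(M|F)^t$ is connected, and $|S_F|=t|F|\ge 2$, so $S_F$ is a connected flat of $M^t$ with at least two elements.

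The main obstacle is the converse half of the connectivity equivalence: the easy implication only needs that separators of $M$ yield separators of $M^t$, whereas here one needs that \emph{every} nontrivial separator of $M^t$ descends to one of $M$. This can fail once loops or coloops are present---an isolated loop inside some $S_e$ is a separator of $M^t$ that is not of the form $S_X$---so the decisive step is the reduction to the loopless, coloopless case (with at least two elements), where the structural description of the separators of $M^t$ applies; the trivial single-element matroids are the only cases that would need to be set aside and treated directly.
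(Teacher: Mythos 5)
Your proof is correct and takes essentially the same approach as the paper, which gives no separate argument but simply states that the lemma follows from the separator correspondence, the clone description of the separators of $M^t$, and Lemmas \ref{lem:texpminors} and \ref{lem:flatstructure1} --- precisely the facts you assemble. You are, if anything, slightly more careful than the source: the paper's remark before the lemma asks only that $M$ have no coloops for every separator of $M^t$ to be some $S_X$, while your loop example rightly shows that looplessness is needed as well, and the single-element case you set aside is genuinely exceptional (for $t\ge 2$, $M=U_{1,1}$ is connected but $M^t=U_{t,t}$ is not), a case the paper also tacitly excludes.
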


\begin{lemma}\label{lem:flatstructure2}
  For $X\subseteq E(M^t)$, let
  $\theta(X) = \{e\in E(M)\,:\,S_e\subseteq X\}$.  If $X$ is a flat of
  $M^t$, then $\theta(X)$ is a flat of $M$ and each element of
  $X-S_{\theta(X)}$ is a coloop of $M^t|X$.
\end{lemma}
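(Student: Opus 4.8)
The plan is to prove the two assertions separately; both rest on the fact that the elements of each set $S_e$ are clones of $M^t$, together with the rank formula $r_{M^t}(S_A)=t\cdot r(A)$ of Lemma~\ref{lem:rank}. As a preliminary step I would record the identity $\cl_{M^t}(S_A)=S_{\cl_M(A)}$, valid for every $A\subseteq E(M)$. To see it, note that $\cl_M(A)$ is a flat of $M$, so $S_{\cl_M(A)}$ is a flat of $M^t$ by Lemma~\ref{lem:flatstructure1}; it contains $S_A$, and by Lemma~\ref{lem:rank} it has the same rank, $t\cdot r(A)$, as $S_A$, so it must equal $\cl_{M^t}(S_A)$.

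For the claim that $\theta(X)$ is a flat of $M$, I would write $A=\theta(X)$ and note that $S_A\subseteq X$ directly from the definition of $\theta$. Since $X$ is a flat, $\cl_{M^t}(S_A)\subseteq\cl_{M^t}(X)=X$; by the preliminary identity this says $S_{\cl_M(A)}\subseteq X$. Hence for each $g\in\cl_M(A)$ we have $S_g\subseteq X$, so $g\in\theta(X)=A$. Thus $\cl_M(A)\subseteq A$, so $A$ is a flat of $M$.

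For the coloop claim, I would take $x\in X-S_{\theta(X)}$, say with $x\in S_e$. Then $e\notin\theta(X)$, so $S_e\not\subseteq X$, and I may fix some $y\in S_e-X$. The elements $x$ and $y$ are clones, so the transposition $\sigma=(x,y)$ is an automorphism of $M^t$; since $y\notin X$, the set $X-x$ contains neither $x$ nor $y$ and is therefore fixed setwise by $\sigma$. Consequently $\sigma$ fixes $\cl_{M^t}(X-x)$, so $x\in\cl_{M^t}(X-x)$ if and only if $y\in\cl_{M^t}(X-x)$. But $\cl_{M^t}(X-x)\subseteq\cl_{M^t}(X)=X$ and $y\notin X$, so $y\notin\cl_{M^t}(X-x)$; hence $x\notin\cl_{M^t}(X-x)$, which (as $x\in X$) is exactly the statement that $x$ is a coloop of $M^t|X$.

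The rank bookkeeping and the closure identity are routine. The one genuinely load-bearing idea is the clone-swap in the coloop argument: the observation that whenever one element of $S_e$ lies in the flat $X$ while another element of $S_e$ lies outside $X$, the ``partial'' element is forced to be a coloop of $M^t|X$. I expect the only real care to be in keeping the restriction closure straight from the ambient closure $\cl_{M^t}$ (these agree on the relevant membership precisely because $x\in X$), and in verifying that it is exactly the condition $y\notin X$ that makes $X-x$ invariant under $\sigma$.
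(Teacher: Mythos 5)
Your proof is correct, but it takes a genuinely different route from the paper's. The paper runs a single circuit-exchange argument twice: if some $y\in S_e$ lay in $\cl_{M^t}(S_{\theta(X)})-S_{\theta(X)}$, there would be a circuit $C$ with $y\in C$ and $C-y\subseteq S_{\theta(X)}\subseteq X$; since $S_e$ is a set of clones, $(C-y)\cup x$ would be a circuit for every $x\in S_e$, so the flat $X$ would absorb all of $S_e$, a contradiction. This shows $S_{\theta(X)}$ is a flat of $M^t$, whence $\theta(X)$ is a flat of $M$ by Lemma~\ref{lem:flatstructure1}, and the same circuit argument shows that no circuit of $M^t|X$ contains an element of $X-S_{\theta(X)}$. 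You instead prove the first assertion by rank bookkeeping --- the identity $\cl_{M^t}(S_A)=S_{\cl_M(A)}$, which you correctly derive from Lemmas~\ref{lem:rank} and~\ref{lem:flatstructure1} --- and the second by invoking the automorphism definition of clones: the transposition $(x,y)$, with $y\in S_e-X$, fixes $X-x$ setwise, hence fixes $\cl_{M^t}(X-x)$, so $x\in\cl_{M^t}(X-x)$ would force $y\in\cl_{M^t}(X-x)\subseteq X$, a contradiction. Both arguments ultimately rest on clone symmetry, but your version isolates a reusable closure identity (a natural companion to Lemma~\ref{lem:flatstructure1}) and avoids circuits entirely, while the paper's approach is more uniform, dispatching both conclusions with one circuit-exchange template and without needing the rank formula beyond what Lemma~\ref{lem:flatstructure1} already encodes.
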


\begin{proof}
  We first show that $S_{\theta(X)}$ is a flat of $M^t$.  If
  $y\in \cl_{M^t}(S_{\theta(X)})-S_{\theta(X)}$, then there would be a
  circuit $C$ of $M^t$ with $y\in C$ and $C-y\subseteq S_{\theta(X)}$.
  Now $y\in S_e$ for some $e\in E(M)$.  Since $S_e$ is a set of
  clones, $(C-y)\cup x$ would be a circuit of $M^t$ for any
  $x\in S_e$.  Since $X$ is a flat and
  $C-y\subseteq S_{\theta(X)}\subseteq X$, this would give
  $S_e\subseteq X$, so $e\in \theta(X)$, but that is impossible since
  $y\in S_e- S_{\theta(X)}$.  Thus, $S_{\theta(X)}$ is a flat of
  $M^t$, so $\theta(X)$ is a flat of $M$ by Lemma
  \ref{lem:flatstructure1}.  The same argument about circuits and
  clones shows that if $z\in X-S_{\theta(X)}$, then no circuit of
  $M^t|X$ contains $z$, so $z$ is a coloop of $M^t|X$.
\end{proof}

For a flat $X$ of $M^t$, some elements of $S_{\theta(X)}$ may also be
coloops of $M^t|X$.

\begin{lemma}\label{lem:flatstructure3}
  Let $F$ be a flat of $M$.  If $A\subset E(M^t)-S_F$ with $|A|<t$,
  then $S_F\cup A$ is a flat of $M^t$.
\end{lemma}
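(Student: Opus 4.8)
The plan is to argue by contradiction, showing that if $S_F\cup A$ fails to be a flat, then its closure $G=\cl_{M^t}(S_F\cup A)$ contains an element that is simultaneously a coloop of $M^t|G$ (by Lemma~\ref{lem:flatstructure2}) and a member of a circuit lying inside $G$. The whole argument runs through the map $\theta(X)=\{e\in E(M):S_e\subseteq X\}$ from Lemma~\ref{lem:flatstructure2} and the rank formula of Lemma~\ref{lem:rank}.

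First I would set $G=\cl_{M^t}(S_F\cup A)$, suppose for contradiction that $G\ne S_F\cup A$, and pick some $x\in G-(S_F\cup A)$. The crux is to pin down $\theta(G)$; I claim $\theta(G)=F$. Since $S_F\subseteq G$, every $e\in F$ satisfies $S_e\subseteq S_F\subseteq G$, so $F\subseteq\theta(G)$, and $\theta(G)$ is a flat of $M$ by Lemma~\ref{lem:flatstructure2}. For the reverse inclusion I would compare ranks. Closure preserves rank, so by Lemma~\ref{lem:rank} together with the fact that adding $|A|$ elements raises the rank by at most $|A|$,
$$r_{M^t}(G)=r_{M^t}(S_F\cup A)\leq r_{M^t}(S_F)+|A|=t\cdot r_M(F)+|A|<t\cdot r_M(F)+t,$$
where the hypothesis $|A|<t$ is used in the final step. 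If $F$ were a proper subset of the flat $\theta(G)$, then $r_M(\theta(G))\geq r_M(F)+1$, whence $r_{M^t}(G)\geq r_{M^t}(S_{\theta(G)})=t\cdot r_M(\theta(G))\geq t\cdot r_M(F)+t$, contradicting the displayed inequality. Hence $\theta(G)=F$.

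With $\theta(G)=F$ established, the element $x$ lies in $G-S_{\theta(G)}$, so Lemma~\ref{lem:flatstructure2} makes $x$ a coloop of $M^t|G$. On the other hand, $x\in\cl_{M^t}(S_F\cup A)$ with $x\notin S_F\cup A$ yields a circuit $C$ of $M^t$ with $x\in C$ and $C-x\subseteq S_F\cup A\subseteq G$; thus $C\subseteq G$ is a circuit containing $x$, contradicting that $x$ is a coloop of $M^t|G$. This contradiction forces $G=S_F\cup A$, so $S_F\cup A$ is a flat of $M^t$.

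I expect the only genuine subtlety to be the rank comparison that forces $\theta(G)=F$: it hinges on $r_{M^t}(S_{\theta(G)})$ being a multiple of $t$, while the hypothesis $|A|<t$ keeps $r_{M^t}(S_F\cup A)$ strictly below the next multiple $t(r_M(F)+1)$. This integrality is exactly where the bound $|A|<t$ does its work, and it is what prevents the closure from swallowing an entire extra class $S_e$. Everything else is a routine translation between flats of $M$ and flats of $M^t$ using the machinery already assembled in Lemmas~\ref{lem:rank} and~\ref{lem:flatstructure2}.
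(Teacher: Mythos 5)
Your proof is correct and takes essentially the same route as the paper's: both hinge on Lemma~\ref{lem:flatstructure2} together with the rank bound $r_{M^t}(S_F\cup A)\leq r_{M^t}(S_F)+|A|<r_{M^t}(S_F)+t$ from Lemma~\ref{lem:rank}, where $|A|<t$ is exactly what prevents the closure from absorbing a full clone class $S_e$. The paper arranges the same ingredients slightly more directly (the circuit through $x$ shows $x$ is not a coloop, so Lemma~\ref{lem:flatstructure2} forces $S_e\subseteq\cl_{M^t}(S_F\cup A)$, giving the rank contradiction at once), whereas you first pin down $\theta(G)=F$ and then contradict coloop-ness, but the mathematical content is identical.
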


\begin{proof}
  Assume, to the contrary, that
  $x\in\cl_{M^t}(S_F\cup A)- \bigl(S_F\cup A\bigr)$.  Say $x\in S_e$.
  Since $x$ is not a coloop of $M^t|\cl_{M^t}(S_F\cup A)$, Lemma
  \ref{lem:flatstructure2} gives $S_e\subseteq \cl_{M^t}(S_F\cup A)$.
  Now $r_M(F\cup e)=r_M(F)+1$, so
  $$r_{M^t}(S_F\cup A)=r_{M^t}(\cl_{M^t}(S_F\cup A))
  \geq r_{M_t}(S_F\cup S_e)=r_{M^t}(S_F)+t,$$ but that contradicts the
  inequality
  $$r_{M^t}(S_F\cup A)\leq r_{M_t}(S_F)+|A|<r_{M^t}(S_F)+t.$$  Thus,
  $\cl_{M^t}(S_F\cup A)= S_F\cup A$, so $S_F\cup A$ is a flat of
  $M^t$.
\end{proof}

As we show next, the $t$-expansion of a matroid can be written as a
matroid union, and if $M$ itself is a matroid union, there can be many
such expressions.

\begin{thm}\label{thm:texpviaunion}
  Fix $t\in\mathbb{N}$.  Let $M$ be the matroid union
  $M_1\vee M_2\vee \cdots \vee M_k$ where $k\geq 1$.  For each
  $i\in [k]$ and $j\in[t]$, let $M_{i,j}$, be the matroid obtained
  from $M_i$ by, for each $e\in E(M)$, adding each element of $S_e-e$
  parallel to $e$ if $r_{M_i}(e)=1$, or as a loop if $r_{M_i}(e)=0$.
  Then $M^t$ is the matroid union
  \begin{equation}\label{eq:bigunion}
    \bigvee_{i\in [k],j\in[t]} M_{i,j}.
  \end{equation}
\end{thm}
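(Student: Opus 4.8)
The plan is to prove the two matroids are equal by showing that $M^t$ and the matroid union $U=\bigvee_{i\in[k],\,j\in[t]}M_{i,j}$ have the same rank function. The key simplification is that in both matroids the elements of each set $S_e$ form a set of clones, which lets me reduce the comparison of rank functions to the sets of the form $S_A$.

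First I would record the clone structure of $U$. For any $e\in E(M)$ and any two elements of $S_e$, the transposition swapping them is an automorphism of each $M_{i,j}$ (within each $M_{i,j}$ the elements of $S_e$ are all parallel or all loops), hence an automorphism of $U$; thus the elements of $S_e$ are clones of $U$, just as in $M^t$. Since clones lie in exactly the same cyclic flats, every cyclic flat of $U$ is a union of sets $S_e$, that is, of the form $S_A$ for some $A\subseteq E(M)$. Applying Lemma~\ref{lem:gencycrk}(2) to $U$ and to $M^t$ with $\mathcal{Y}=\{S_A:A\subseteq E(M)\}$ (which contains the cyclic flats of each matroid), I get
$$r_U(Y)=\min\{r_U(S_A)+|Y-S_A|:A\subseteq E(M)\}$$
and likewise for $M^t$, where $r_{M^t}(S_A)=t\cdot r_M(A)$ by Lemma~\ref{lem:rank}. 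Hence it suffices to prove $r_U(S_A)=t\cdot r_M(A)$ for every $A\subseteq E(M)$.

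To compute $r_U(S_A)$ I would use the matroid union rank formula, Equation~(\ref{eq:rankunion}). Writing $\sigma(Z)=\{e\in E(M):S_e\cap Z\ne\emptyset\}$, the fact that in $M_{i,j}$ each element of $S_e$ is parallel to $e$ or is a loop gives $r_{M_{i,j}}(Z)=r_{M_i}(\sigma(Z))$, independent of $j$, so $\sum_{i,j}r_{M_{i,j}}(Z)=t\sum_i r_{M_i}(\sigma(Z))$ and therefore
$$r_U(S_A)=\min\Bigl\{t\sum_{i}r_{M_i}(\sigma(Z))+|S_A-Z|\,:\,Z\subseteq S_A\Bigr\}.$$
I would then match this against $r_M(A)=\min\{\sum_i r_{M_i}(V)+|A-V|:V\subseteq A\}$, again Equation~(\ref{eq:rankunion}). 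For the upper bound, take a minimizing $V^*\subseteq A$ for $r_M(A)$ and set $Z=S_{V^*}$, so $\sigma(Z)=V^*$ and $|S_A-Z|=t\,|A-V^*|$, giving value exactly $t\cdot r_M(A)$. For the lower bound, given any $Z\subseteq S_A$ put $V=\sigma(Z)$; then $|S_A-Z|\ge t\,|A-V|$ since $S_A-Z$ contains all of $S_e$ for each $e\in A-V$, and combining this with $\sum_i r_{M_i}(V)+|A-V|\ge r_M(A)$ yields that the bracketed quantity is at least $t\cdot r_M(A)$. This establishes $r_U(S_A)=t\cdot r_M(A)$ and completes the proof.

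The main obstacle is this last step: the matroid union rank function is itself a minimization, so comparing $r_U(S_A)$ with $r_M(A)$ requires relating two nested minimizations. This is exactly where the special structure of the construction pays off — because every element of $S_e$ is parallel to $e$ or a loop in each $M_{i,j}$, the inner rank $r_{M_{i,j}}(Z)$ collapses to $r_{M_i}(\sigma(Z))$, so the optimal $Z$ can be taken of the form $S_V$ and the two minimizations align. Verifying the identity $r_{M_{i,j}}(Z)=r_{M_i}(\sigma(Z))$ carefully, including the loop case, is the one place demanding attention, but it is routine once the parallel/loop structure is made explicit.
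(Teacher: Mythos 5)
Your proposal is correct and follows essentially the same path as the paper's proof: establish that each $S_e$ is a set of clones in the union, so its cyclic flats have the form $S_A$, reduce via Lemma~\ref{lem:gencycrk} to showing the two rank functions agree on sets $S_A$, and then use Equation~(\ref{eq:rankunion}) together with the parallel/loop structure of the $M_{i,j}$ to match the minimizations. The only cosmetic difference is in the endgame: you invoke Lemma~\ref{lem:rank} and give a two-sided bound against $t\cdot r_M(A)$, whereas the paper expands $r_{M^t}(S_X)$ and $r_M(W)$ as nested minimizations and collapses them; the content is the same.
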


\begin{proof}
  Let $M'$ be the matroid union (\ref{eq:bigunion}) and let $r'$ be
  its rank function.  By construction, each set $S_e$, for
  $e\in E(M)$, is a set of clones in each matroid $M_{i,j}$, and so
  $S_e$ is a set of clones in $M'$.  Thus, each cyclic flat of $M'$ is
  $S_X$ for some $X\subseteq E(M)$.  Therefore, by Lemma
  \ref{lem:gencycrk}, in order to show that $M'=M^t$, it suffices to
  show that $r'(S_X)=r_{M^t}(S_X)$ for all $X\subseteq E(M)$.  By
  Equation (\ref{eq:rankunion}), for $X\subseteq E(M)$,
  $$r'(S_X)
  = \min\biggl\{\biggl(\sum_{i\in [k],j\in[t]}r_{M_{i,j}}(Y)\biggr) +
  |S_X-Y|\,:\,Y\subseteq S_X\biggr\}.$$ If $S_e\cap Y\ne \emptyset$,
  then the sum can only decrease or remain the same if we replace $Y$
  by $Y\cup S_e$.  Also, $r_{M_{i,j}}(S_V)=r_{M_i}(V)$ for
  $V\subseteq X$, so
  $$r'(S_X) = \min\biggl\{\biggl(\sum_{i\in [k]}t\cdot r_{M_i}(V)\biggr)
  + |S_X-S_V|\,:\,V\subseteq X\biggr\}.$$

  By Lemma \ref{lem:flatstructure1}, the union of the circuits of
  $M^t|S_X$ is $S_W$ for some $W\subseteq X$, so by Lemmas
  \ref{lem:gencycrk} and \ref{lem:rank},
  \begin{align*}
    r_{M^t}(S_X)
    & = \min\{r_{M^t}(S_W)+  |S_X-S_W|\,:\,W\subseteq X\}\\
    & =  \min\{t\cdot r_M(W) + t\cdot |X-W|\,:\,W\subseteq X\}.       
  \end{align*}
  By Equation (\ref{eq:rankunion}),
  $$r_M(W) = \min\biggl\{\biggl(\sum_{i\in [k]}r_{M_i}(U)\biggr) +
  |W-U|\,:\,U\subseteq W\biggr\}.$$ Now for
  $U\subseteq W\subseteq X$, we have
  $$t\cdot |W-U|+t\cdot|X-W| =t\cdot |X-U| =|S_X-S_U|.$$
  Therefore
  \begin{align*}
    r_{M^t}(S_X)
    & =  \min\{t\cdot r_M(W) + t\cdot |X-W|\,:\,W\subseteq X\}\\
    & =  \min\biggl\{\biggl(t\cdot
      \sum_{i\in [k]}r_{M_i}(U)\biggr)+t\cdot |W-U|  +
      t\cdot |X-W|\,:\,U\subseteq W\subseteq X\biggr\}\\
    & =  \min\biggl\{ \biggl( \sum_{i\in [k]}t\cdot r_{M_i}(U)\biggr) + |S_X-S_U|
      \,:\,U\subseteq X\biggr\}\\        
    & = r'(S_X). \qedhere
  \end{align*}
\end{proof}

\begin{cor}\label{cor:transv}
  If $M$ is a transversal matroid, so is $M^t$.  If $M$ is a
  cotransversal matroid (i.e., $M^*$ is transversal), so is $M^t$.
\end{cor}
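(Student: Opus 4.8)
The plan is to reduce the transversal case to the fact, stated just before Section~\ref{section:texpansion}, that a matroid is transversal if and only if it is a union of rank-$1$ matroids, and then to invoke Theorem~\ref{thm:texpviaunion}. First I would suppose $M$ is transversal and write $M = M_1\vee M_2\vee\cdots\vee M_k$ where each $M_i$ has rank $1$. The key observation is that the matroids $M_{i,j}$ built in the statement of Theorem~\ref{thm:texpviaunion} are again of rank at most $1$: each $M_{i,j}$ is obtained from $M_i$ by adding, for each $e\in E(M)$, the elements of $S_e - e$ either parallel to $e$ (when $r_{M_i}(e)=1$) or as loops (when $r_{M_i}(e)=0$). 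Adding loops and parallel elements never raises the rank, so $r(M_{i,j}) = r(M_i)\le 1$. Hence Theorem~\ref{thm:texpviaunion} exhibits $M^t$ as the union $\bigvee_{i\in[k],\,j\in[t]} M_{i,j}$ of matroids of rank at most $1$, which is exactly the condition for $M^t$ to be transversal.

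For the cotransversal (dual-transversal) statement, I would deduce it from the transversal case together with Lemma~\ref{lem:dual}, which gives $(M^*)^t = (M^t)^*$. Concretely, if $M$ is cotransversal then $M^*$ is transversal, so by the first part $(M^*)^t$ is transversal; then $(M^t)^* = (M^*)^t$ is transversal, which says precisely that $M^t$ is cotransversal. This keeps the second half essentially a one-line consequence of the first half and the duality lemma.

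The one point requiring a little care is the rank-$1$ claim for the $M_{i,j}$, since a rank-$1$ matroid can legitimately contain loops, and the construction in Theorem~\ref{thm:texpviaunion} explicitly adds loops in the case $r_{M_i}(e)=0$. I should note that ``union of rank-$1$ matroids'' is meant in the sense allowing rank at most $1$ (the characterization of transversal matroids via rank-$1$ unions is unaffected by including rank-$0$ summands, as these merely contribute loops to the union and can be discarded or absorbed). Thus the only genuine content is verifying $r(M_{i,j})\le 1$, which follows immediately because parallel extension and the addition of loops are rank-preserving operations applied to a matroid of rank at most $1$.

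I expect no serious obstacle here: the entire corollary is a direct application of Theorem~\ref{thm:texpviaunion} once one records that the constructed summands $M_{i,j}$ inherit the rank bound from $M_i$. The mild subtlety is bookkeeping around rank-$0$ (loop) summands in the definition of transversal via rank-$1$ unions, but this is a matter of phrasing rather than a mathematical difficulty.
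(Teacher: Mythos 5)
Your proof is correct and takes essentially the same approach as the paper: both write $M$ as a union of rank-$1$ matroids, observe that the matroids $M_{i,j}$ constructed in Theorem~\ref{thm:texpviaunion} still have rank $1$ (since parallel extension and adding loops preserve rank), and then deduce the cotransversal statement from Lemma~\ref{lem:dual}. Your side remark about rank-$0$ summands is harmless but unnecessary bookkeeping, as each $M_i$ has rank exactly $1$ and hence so does each $M_{i,j}$.
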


\begin{proof}
  When $M$ is transversal, we can write it as
  $M_1\vee M_2\vee \cdots \vee M_k$ where each matroid $M_i$ has rank
  $1$. Thus, when applying the theorem above, each $M_{i,j}$ has rank
  $1$, and so $M^t$, being a matroid union of rank-$1$ matroids, is
  transversal.  The corresponding statement for cotransversal matroids
  follows by Lemma \ref{lem:dual}.
\end{proof}

Adding the elements of each set $S_e-e$ to $M$, as in the theorem, to
get $N$ and taking the matroid union of $t$ copies of $N$, while also
giving $M^t$, is quite different from what is done in the proof of the
corollary since $N$ need not be transversal; the class of transversal
matroids is not closed under parallel extension.  This illustrates the
fact that different ways to write $M$ as a matroid union can yield
significantly different ways to write $M^t$ as a matroid union.

When a transversal matroid $M$ is written as
$M_1\vee M_2\vee \cdots \vee M_k$ where each matroid $M_i$ has rank
$1$, if $A_i$ is the set of non-loops of $M_i$, then
$(A_1,A_2,\ldots,A_k)$ is a presentation of $M$; likewise, we can go
from presentations to matroid unions by reversing this.  It follows
that if $(A_1,A_2,\ldots,A_k)$ is a presentation of $M$ and, for each
$i\in[k]$ and $j\in[t]$, we set $A_{i,j}=S_{A_i}$, then
$(A_{1,1},A_{1,2},\ldots,A_{1,t},\ldots,A_{k,1},A_{k,2},\ldots,A_{k,t})$
is a presentation of $M^t$. One can adapt those ideas to show that the
class of lattice path matroids \cite{lpm1} and the class of multi-path
matroids \cite{multipath} are closed under $t$-expansion.

Another way to prove Corollary \ref{cor:transv} yields a stronger
result.  The Mason-Ingleton inequalities characterize transversal
matroids, and the corresponding equalities characterize fundamental
(or principal) transversal matroids (see \cite{chartrans}).  With that
approach, the result below follows easily.

\begin{thm}\label{thm:trcl}
  Fix $t\in\mathbb{N}$.  A matroid $M$ is transversal if and only if
  $M^t$ is transversal.  Furthermore, $M$ is fundamental transversal
  if and only if $M^t$ is fundamental transversal.
\end{thm}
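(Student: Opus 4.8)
The plan is to invoke the characterization of transversal matroids by the Mason--Ingleton inequalities from \cite{chartrans} and to show that, for each family of cyclic flats, the inequality written for $M^t$ is exactly $t$ times the corresponding inequality written for $M$. Recall that these inequalities assert that, for every nonempty family $F_1,F_2,\ldots,F_k$ of cyclic flats of a matroid,
$$r\Big(\bigcup_{i\in[k]}F_i\Big)\le\sum_{\emptyset\ne S\subseteq[k]}(-1)^{|S|+1}\,r\Big(\bigcap_{i\in S}F_i\Big),$$
with the matroid being transversal precisely when all such inequalities hold and being fundamental transversal precisely when all of them hold with equality. Since every term is the rank of a union or an intersection of cyclic flats, the argument reduces to tracking how these ranks transform under $t$-expansion.

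First I would record the dictionary between $\mathcal{Z}(M)$ and $\mathcal{Z}(M^t)$. By Definition \ref{def:kexp} the map $A\mapsto S_A$ is a bijection from $\mathcal{Z}(M)$ onto $\mathcal{Z}(M^t)$, and since the sets $S_e$ are pairwise disjoint with $S_A=\bigcup_{e\in A}S_e$, this map carries unions to unions and intersections to intersections, so that $\bigcup_i S_{F_i}=S_{\bigcup_i F_i}$ and $\bigcap_{i\in S}S_{F_i}=S_{\bigcap_{i\in S}F_i}$. Consequently, a family of cyclic flats of $M^t$ is exactly $\{S_{F_1},\ldots,S_{F_k}\}$ for a family $F_1,\ldots,F_k$ of cyclic flats of $M$, and the unions and intersections occurring in the inequality for $M^t$ are the sets $S_Y$ for the corresponding unions and intersections $Y$ arising in $M$.

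Next I would apply Lemma \ref{lem:rank}, which gives $r_{M^t}(S_Y)=t\cdot r_M(Y)$ for every subset $Y$ of $E(M)$. Substituting these values, the inequality for $M^t$ on the family $\{S_{F_1},\ldots,S_{F_k}\}$ becomes
$$t\cdot r_M\Big(\bigcup_{i\in[k]}F_i\Big)\le\sum_{\emptyset\ne S\subseteq[k]}(-1)^{|S|+1}\,t\cdot r_M\Big(\bigcap_{i\in S}F_i\Big),$$
which, after dividing by the positive integer $t$, is exactly the inequality for $M$ on $F_1,\ldots,F_k$. Because the correspondence between families is a bijection, $M^t$ satisfies every Mason--Ingleton inequality if and only if $M$ does, giving the first assertion, and an inequality holds with equality for $M^t$ if and only if it holds with equality for $M$, giving the statement about fundamental transversal matroids.

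I expect the main obstacle to be stating the Mason--Ingleton inequalities in exactly the form used in \cite{chartrans} and confirming that every quantity in them scales by $t$. The one point needing care is that an intersection $\bigcap_{i\in S}F_i$ of cyclic flats is generally not itself a cyclic flat, but this causes no difficulty because Lemma \ref{lem:rank} applies to all subsets of $E(M)$; and should the precise form in \cite{chartrans} also involve cardinalities, these scale as well, since $|S_Y|=t\,|Y|$, so the reduction to $t$ times the inequality for $M$ is unaffected.
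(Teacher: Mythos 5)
Your proposal is correct and is essentially the paper's own argument: the paper establishes this theorem only by remarking that the Mason--Ingleton inequalities (and the corresponding equalities) from \cite{chartrans} characterize transversal (and fundamental transversal) matroids and that the result then follows easily, and your scaling argument---the union- and intersection-preserving bijection $A\mapsto S_A$ between $\mathcal{Z}(M)$ and $\mathcal{Z}(M^t)$ combined with Lemma \ref{lem:rank}---is exactly the omitted verification. One small correction for a final write-up: the characterization in \cite{chartrans} runs in the opposite direction from what you wrote, namely $r\bigl(\bigcap_{i\in[k]}F_i\bigr)\le\sum_{\emptyset\ne S\subseteq[k]}(-1)^{|S|+1}\,r\bigl(\bigcup_{i\in S}F_i\bigr)$ for families of cyclic flats, but, as you yourself anticipate, this changes nothing because every term in either form scales by exactly $t$ under $t$-expansion.
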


Since the class of gammoids (minors of transversal matroids) is closed
under parallel extension and matroid unions, we have the following
corollary of Theorem \ref{thm:texpviaunion}.

\begin{cor}
  For any $t\in \mathbb{N}$, the $t$-expansion of a gammoid is a
  gammoid.
\end{cor}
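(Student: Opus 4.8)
The plan is to reduce immediately to Theorem \ref{thm:texpviaunion} by viewing the gammoid $M$ as a trivial matroid union of a single term, namely $M$ itself. Recall that a gammoid is by definition a minor of a transversal matroid, and that the class of gammoids is closed under parallel extension and under matroid union. Those two closure properties, together with the union description of $M^t$ in Theorem \ref{thm:texpviaunion}, are all that is needed.

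First I would apply Theorem \ref{thm:texpviaunion} in the case $k=1$, writing $M$ as the one-term matroid union $M_1$ with $M_1=M$. The theorem then expresses $M^t$ as the matroid union $\bigvee_{j\in[t]} M_{1,j}$, where each $M_{1,j}$ is obtained from $M$ by, for every $e\in E(M)$, adjoining the $t-1$ elements of $S_e-e$ parallel to $e$ when $e$ is a non-loop (that is, $r_M(e)=1$) and as loops when $e$ is a loop (that is, $r_M(e)=0$).

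Next I would observe that each $M_{1,j}$ arises from the gammoid $M$ by a finite sequence of parallel extensions (for the non-loop elements) together with loop additions (for the loop elements). Since the class of gammoids is closed under parallel extension, and since adjoining loops also preserves being a gammoid (a loop can be added to any transversal matroid by adjoining a new ground-set element covered by no set in a presentation, hence to any of its minors), every $M_{1,j}$ is a gammoid.

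Finally, $M^t$ is the matroid union of the gammoids $M_{1,j}$, and the class of gammoids is closed under matroid union, so $M^t$ is a gammoid. There is essentially no obstacle here: the one point deserving a moment's care is the treatment of the loops of $M$ in the construction of the $M_{1,j}$, but this is dispatched by the standard fact, noted above, that gammoids remain gammoids when loops are adjoined.
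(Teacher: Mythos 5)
Your proof is correct and takes essentially the same route as the paper, which derives the corollary from Theorem \ref{thm:texpviaunion} together with the closure of gammoids under parallel extension and matroid union; your $k=1$ specialization and the explicit handling of loops just spell out the details the paper leaves implicit.
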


We next prove a counterpart of Theorem \ref{thm:trcl} for positroids.

\begin{thm}\label{thm:poscl}
  Fix $t\in\mathbb{N}$.  A matroid $M$ is a positroid if and only if
  $M^t$ is a positroid.
\end{thm}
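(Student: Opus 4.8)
The plan is to reduce to the loopless case and then read off the positroid property from the cyclic interval characterization in Theorem~\ref{thm:char}. First I would dispose of loops. By Lemma~\ref{lem:rank}, an element of $S_e$ is a loop of $M^t$ exactly when $e$ is a loop of $M$, so the loops of $M^t$ are $S_L$, where $L$ is the set of loops of $M$. Since loops correspond to zero columns, which can be inserted into or deleted from any real matrix without affecting the nonnegativity of its maximal minors, a matroid is a positroid if and only if its loopless part is; and by Lemma~\ref{lem:texpminors} we have $M^t\setminus S_L=(M\setminus L)^t$. Hence it suffices to prove the theorem when $M$ is loopless, which I assume from now on.

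The heart of the argument is a correspondence between the data that Theorem~\ref{thm:char} tests for $M$ and for $M^t$. Call a pair $(F,K)$ \emph{relevant} for a loopless matroid if $F$ is a proper connected flat with $|F|\geq 2$ and $K$ is a connected component of the contraction by $F$ with at least two elements. I claim $(F,K)$ is relevant for $M$ if and only if $(S_F,\,S_{E(K)})$ is relevant for $M^t$. Indeed, by Lemma~\ref{lem:conn} the proper connected flats $G$ of $M^t$ with $|G|\geq 2$ are exactly the sets $S_F$ for $F$ a proper connected flat of $M$ with $|F|\geq 2$. For such $F$, Lemma~\ref{lem:texpminors} gives $M^t/S_F=(M/F)^t$, and since $t$-expansion commutes with direct sums this equals $\bigoplus_i K_i^t$ over the components $K_i$ of $M/F$. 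Because $F$ is a flat, $M/F$ is loopless, so each singleton component is a coloop $\{x\}$ whose expansion $S_x$ is a set of $t$ coloops (contributing only singleton components to $M^t/S_F$), while each component $K$ with $|E(K)|\geq 2$ has no coloops, so $K^t$ is connected by Lemma~\ref{lem:conn} and has $t|E(K)|\geq 2$ elements. Thus the components of $M^t/S_F$ with at least two elements are precisely the sets $S_{E(K)}$, establishing the correspondence in both directions. This bookkeeping is the delicate part of the proof: the point is that $t$-expansion creates no spurious connected flats or large contraction-components beyond the expansions of those already present in $M$.

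For the forward direction, suppose $M$ has a positroid order $\lhd$, and form the \emph{expanded order} $\prec$ on $S_{E(M)}$ by replacing each $e$ with the block $S_e$, listed consecutively in any internal order. If $I$ is a cyclic interval of $\lhd$ then $S_I$ is a cyclic interval of $\prec$, and $I\cap F=\emptyset$ forces $S_I\cap S_F=\emptyset$. Given any relevant pair $(S_F,S_{E(K)})$ of $M^t$, the corresponding relevant pair $(F,K)$ of $M$ satisfies $E(K)\subseteq I$ for some cyclic interval $I$ of $\lhd$ disjoint from $F$; then $S_{E(K)}\subseteq S_I$ with $S_I$ a cyclic interval of $\prec$ disjoint from $S_F$. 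So $\prec$ satisfies the cyclic interval property, and $M^t$ is a positroid.

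For the converse, suppose $M^t$ has a positroid order $\prec$. For each $e$ choose a representative $\sigma(e)\in S_e$; since the sets $S_e$ are disjoint, $\sigma$ is injective and induces a linear order $\sqsubset$ on $E(M)$ by $e\sqsubset e'$ iff $\sigma(e)\prec\sigma(e')$. Fix a relevant pair $(F,K)$ of $M$; then $(S_F,S_{E(K)})$ is relevant for $M^t$, so there is a cyclic interval $A$ of $\prec$ with $S_{E(K)}\subseteq A$ and $A\cap S_F=\emptyset$. For $e\in E(K)$ we get $\sigma(e)\in S_e\subseteq S_{E(K)}\subseteq A$, while for $f\in F$ we get $\sigma(f)\in S_f\subseteq S_F$, so $\sigma(f)\notin A$. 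Taking $J$ to be the smallest $\sqsubset$-interval containing $E(K)$ (when $A$ is an interval of $\prec$; otherwise one argues symmetrically with $F$ and the complementary interval), any $f\in F\cap J$ would lie $\prec$-between two representatives of $E(K)$ that both lie in the interval $A$, forcing $\sigma(f)\in A$, a contradiction. Hence $E(K)$ lies in a cyclic interval of $\sqsubset$ disjoint from $F$, so $\sqsubset$ is a positroid order and $M$ is a positroid. With the correspondence of the second paragraph in hand, both directions reduce to routine manipulations of cyclic intervals, and the loopless reduction completes the proof.
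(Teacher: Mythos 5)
Your proof is correct and takes essentially the same approach as the paper's: both verify the cyclic interval property of Theorem~\ref{thm:char} by matching the relevant pairs (proper connected flat, large contraction component) of $M$ and $M^t$ via Lemmas~\ref{lem:conn} and~\ref{lem:texpminors}, using the block-expanded order in the forward direction and an order induced on $E(M)$ for the converse. If anything, your write-up is more complete than the paper's, which dismisses the converse as ``a similar idea'' and does not explicitly handle loops (relevant because Theorem~\ref{thm:char} assumes looplessness); your representative-based argument and loop reduction supply exactly those details.
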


\begin{proof}
  We use the characterization of positroids in Theorem \ref{thm:char}.
  We write a linear order as a list, giving the elements from least to
  greatest.  First assume that $M$ is a positroid, and let
  $x,y,\ldots,z$ be a positroid order for $M$.  For each $e\in E(M)$,
  take a linear order $e,e_1,e_2,\ldots,e_{t-1}$ on $S_e$, and
  concatenate these linear orders as
  $$x,x_1,x_2,\ldots,x_{t-1}, y,y_1,y_2,\ldots,y_{t-1},\ldots,
  z,z_1,z_2,\ldots,z_{t-1}$$ to get a linear order on $E(M^t)$.  We
  claim that this is a positroid order for $M^t$.  By Lemma
  \ref{lem:conn}, a connected flat of $M^t$ with at least two elements
  is $S_F$ for some connected flat $F$ of $M$ with $|F|\geq 2$.  The
  lemmas above, applied to $M/F$ and $(M/F)^t=M^t/S_F$, show that the
  non-singleton connected components of $M^t/S_F$ are the
  $t$-expansions of the non-singleton connected components of $M/F$.
  Each non-singleton connected component of $M/F$ is a subset of a
  cyclic interval of $E(M)$ that is disjoint from $F$, so, using the
  linear order on $E(M^t)$ above, the same follows for the
  non-singleton connected components of $M^t/S_F$.  Thus, the cyclic
  interval property holds for $M^t$, so it is a positroid.  A similar
  idea yields the converse, using the linear order induced on $E(M)$
  by a linear order on $E(M^t)$ (in which the sets $S_e$ may or may
  not be intervals).
\end{proof}

\section{Tutte connectivity}\label{section:connectivity}

We first show that $\tau(M)$ and $\tau(M^t)$ are related in a simple
way when $\tau(M)<\infty$.  We then use that relation to show that,
for each $n\in \mathbb{N}$, there are loopless matroids $M'$ and $N'$
on the same ground set that have the same configuration (and so the
same Tutte polynomial and $\mathcal{G}$-invariant) but
$\tau(N')-\tau(M')\geq n$.

\begin{thm}\label{thm:connkexp}
  For a matroid $M$ and $t\in \mathbb{N}$, if $\tau(M)<\infty$, then
  $\tau(M^t) =t(\tau(M)-1)+1$.
\end{thm}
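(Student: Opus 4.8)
The plan is to establish the identity $\tau(M^t)=t(\tau(M)-1)+1$ by two inequalities, using the relationship $r_{M^t}(S_X)=t\cdot r_M(X)$ from Lemma \ref{lem:rank} as the central computational tool. Write $\tau(M)=k$, so $M$ has a $k$-separation but no $(k-1)$-separation. The key observation is that for any $X\subseteq E(M)$, we have $\lambda_{M^t}(S_X)=r_{M^t}(S_X)+r_{M^t}(S_{\overline X})-r(M^t)=t\bigl(r_M(X)+r_M(\overline X)-r(M)\bigr)=t\cdot\lambda_M(X)$. This scaling of the connectivity function on the ``symmetric'' sets $S_X$ is what drives the whole argument.

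For the upper bound $\tau(M^t)\le t(k-1)+1$, I would take a $k$-separation $(X,\overline X)$ of $M$, so $|X|\ge k$, $|\overline X|\ge k$, and $\lambda_M(X)=k-1$ (it cannot be smaller since $\tau(M)=k$). Then $(S_X,S_{\overline X})$ has $|S_X|=t|X|\ge tk\ge t(k-1)+1$, likewise for $S_{\overline X}$, and $\lambda_{M^t}(S_X)=t(k-1)<t(k-1)+1$, so this is a $(t(k-1)+1)$-separation of $M^t$, giving $\tau(M^t)\le t(k-1)+1$.

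For the lower bound $\tau(M^t)\ge t(k-1)+1$, I must show $M^t$ has no $\ell$-separation for any $\ell\le t(k-1)$. Here is where the main obstacle lies: an arbitrary separation $(Y,\overline Y)$ of $M^t$ need not have $Y$ of the form $S_X$, so the clean scaling above does not directly apply. The plan is to exploit that each $S_e$ is a set of $t$ clones. Given a low-width set $Y$, I would argue that ``rounding'' $Y$ to a union of clonal blocks $S_e$ does not increase the connectivity function, i.e.\ that some set $S_X$ with $\lambda_{M^t}(S_X)\le\lambda_{M^t}(Y)$ can be extracted, while controlling how the sizes $|Y|$, $|\overline Y|$ change. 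Concretely, if some $S_e$ meets both $Y$ and $\overline Y$, moving those clones all to one side can only decrease $\lambda_{M^t}$ by submodularity and the clonal symmetry; iterating yields a set of the form $S_X$. Once $(S_X,S_{\overline X})$ is reached with $\lambda_{M^t}(S_X)=t\cdot\lambda_M(X)\le t(k-1)-1<t(k-1)$, the value $t\cdot\lambda_M(X)$ forces $\lambda_M(X)\le k-2$, so $(X,\overline X)$ would be a $(k-1)$-separation of $M$ (after checking $|X|,|\overline X|\ge k-1$, which follows because an $\ell$-separation of $M^t$ with small $\ell$ cannot have a tiny side once rounded), contradicting $\tau(M)=k$.

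The delicate point to handle carefully is the size bookkeeping when rounding $Y$ to $S_X$: I would verify that if $(Y,\overline Y)$ is an $\ell$-separation with $\ell\le t(k-1)$, then after replacing $Y$ by a union of blocks the resulting sides remain large enough to contradict the non-existence of a $(k-1)$-separation of $M$. Since $\lambda_{M^t}(S_X)$ is a multiple of $t$, any value below $t(k-1)$ is at most $t(k-2)$, which gives genuine slack and makes the size conditions $|X|\ge k-1$, $|\overline X|\ge k-1$ straightforward to confirm. I expect this clone-rounding step, rather than the arithmetic, to be the crux of the proof.
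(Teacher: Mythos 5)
Your upper bound is exactly the paper's, and your lower-bound strategy---round a separation of $M^t$ to a block-aligned one $(S_X,S_{\overline{X}})$ and then rescale---is also the paper's. Your mechanism for the $\lambda$-part of the rounding is sound: for clones $x\in Y$, $y\notin Y$, clonal symmetry gives $\lambda_{M^t}((Y\cup y)-x)=\lambda_{M^t}(Y)$, and submodularity applied to $Y$ and $(Y\cup y)-x$ yields $\lambda_{M^t}(Y\cup y)+\lambda_{M^t}(Y-x)\leq 2\lambda_{M^t}(Y)$, so at least one of the two one-element moves does not increase $\lambda_{M^t}$. (The paper instead does a coloop case analysis on whole blocks $S_e$, which also settles termination of the iteration---a point you would still need to address, since one-element moves chosen only by this criterion can oscillate.)

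The genuine gap is the size bookkeeping, which you defer and then dismiss as ``straightforward''; it is in fact the crux, and your stated justification addresses the wrong quantity. The divisibility-by-$t$ slack concerns $\lambda_{M^t}(S_X)$ and is only needed at the very end, where it converts $t|X|=|S_X|\geq t\lambda_M(X)+1$ into $|X|\geq\lambda_M(X)+1$; it gives no control whatsoever over $|X|$ and $|\overline{X}|$ during the rounding. Since your criterion only guarantees that \emph{some} direction is $\lambda$-safe, nothing in your argument prevents the rounding from shrinking $Y$ step after step until it terminates at a block-aligned set $S_X$ with $X$ independent and $\overline{X}$ spanning (in the extreme, $X=\emptyset$); then $\lambda_M(X)=|X|$, the pair $(X,\overline{X})$ is not a $(\lambda_M(X)+1)$-separation of $M$, and no contradiction with $\tau(M)=k$ is obtained. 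The paper closes this hole with two ingredients absent from your proposal: (i) it rounds a separation of \emph{minimal} order $k'=\tau(M^t)$, so that in the asymmetric case---the elements of $S_e\cap A$ are coloops of $M^t|A$ while those of $S_e\cap\overline{A}$ are not coloops of $M^t|\overline{A}$---removing $S_e\cap A$ drops $\lambda_{M^t}$ by exactly $|S_e\cap A|$ and produces a separation of order $k'-|S_e\cap A|<k'$, contradicting minimality; hence this case never occurs, and in the remaining cases \emph{both} directions are $\lambda$-safe; (ii) one can therefore always move the whole block in the direction that preserves the size bounds, because if both $|A-S_e|<k'$ and $|\overline{A}-S_e|<k'$, then $t|E(M)|\leq 2k'-2+t$, which together with $k'\leq t(k-1)+1$ forces $|E(M)|\leq 2k-1$, contradicting $|E(M)|\geq 2k$ (which holds since $M$ has a $k$-separation). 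Note that your proposal never invokes $|E(M)|\geq 2k$ at all, which is a sign that the size constraints have not actually been used; without (i) and (ii), the final claim $|X|,|\overline{X}|\geq k-1$ is unsupported.
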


\begin{proof}
  Set $\tau(M)=k$ and $\tau(M^t)=k'$.  Let $(X,\overline{X})$ be a
  $k$-separation of $M$.  Thus,
  $$\lambda_{M^t}(S_X) = t\cdot\lambda_M(X)\leq t(k-1).$$
  Also, $|X|\geq k$ so $|S_X|\geq t\cdot k\geq t(k-1)+1$ and likewise
  for $|S_{\overline{X}}|$, so $(S_X, S_{\overline{X}})$ is a
  $(t(k-1)+1)$-separation of $M^t$.  Thus, $k'\leq t(k-1)+1$.  We will
  get $k'\geq t(k-1)+1$ by first proving that $M^t$ has a
  $k'$-separation of the form $(S_Y, S_{\overline{Y}})$.

  Let $(A,\overline{A})$ be a $k'$-separation of $M^t$.  Assume that,
  for some $e\in E(M)$, both $S_e\cap A$ and $S_e\cap \overline{A}$
  are nonempty.  Distinct elements of $S_e$ are clones in $M^t$, so
  the same holds in each minor of $M^t$ that they are in.  This gives
  three options:
  \begin{enumerate}
  \item all elements of $S_e\cap A$ are coloops of $M^t|A$ and no
    element of $S_e\cap \overline{A}$ is a coloop of
    $M^t|\overline{A}$ (or the reverse, switching $A$ and
    $\overline{A}$),
  \item each element of $S_e$ is a coloop of either $M^t|A$ or
    $M^t|\overline{A}$, or
  \item no element of $S_e$ is a coloop of either $M^t|A$ or
    $M^t|\overline{A}$.
  \end{enumerate}
  In case (1), set $B=A-S_e$. Then
  $\lambda_{M^t}(B)=\lambda_{M^t}(A)- |S_e\cap A|$.  Also,
  $B\ne\emptyset$, for otherwise case (1) would give
  $r_{M^t}(\overline{A})=r_{M^t}(\overline{B})=r(M^t)$ and
  $r_{M^t}(A)=|A|$, and so $\lambda_{M^t}(A)< k'$ would give
  $|A|< k'$, contrary to $(A,\overline{A})$ being a $k'$-separation.
  Thus, $(B,\overline{B})$ is a $(k'-|A\cap S_e|)$-separation,
  contrary to the choice of $k'$.  Thus, case (1) does not occur.

  In both cases (2) and (3), we get $\lambda_{M^t}(A-S_e)< k'$ and
  $\lambda_{M^t}(A\cup S_e)< k'$ from $\lambda_{M^t}(A)< k'$, so we
  can resolve each instance of case (2) or (3) in at least one of
  these ways (and so get the desired $k'$-separation
  $(S_Y, S_{\overline{Y}})$) provided that either $|A-S_e|\geq k'$ or
  $|\overline{A}-S_e|\geq k'$.  Assume instead that $|A-S_e|< k'$ and
  $|\overline{A}-S_e|< k'$.  Then
  $$t|E|=|S_E|=|A- S_e|+|\overline{A}- S_e|+|S_e|\leq 2k'-2+t.$$
  Since $k'\leq t(k-1)+1$, this would give $t|E|\leq 2(t(k-1)+1)-2+t$,
  which simplifies to $|E|\leq 2k-1$, but since $M$ has
  $k$-separations, $|E|\geq 2k$.

  Thus, there is a $k'$-separation $(S_Y, S_{\overline{Y}})$ of $M^t$.
  Now $t\cdot\lambda_M(Y)=\lambda_{M^t}(S_Y)= k'-1$ since
  $\tau(M^k)=k'$.  From
  $t\cdot|Y|=|S_Y|\geq k'=t\cdot \lambda_M(Y)+1$, we get
  $|Y|\geq \lambda_M(Y)+1$; likewise,
  $|\overline{Y}|\geq \lambda_M(Y)+1$.  So $(Y,\overline{Y})$ is an
  $(\lambda_M(Y)+1)$-separation of $M$.  Thus, $k\leq \lambda_M(Y)+1$,
  so $t(k-1)+1 \leq t\cdot \lambda_M(Y)+1 =k'$. With the inequality
  proven in the first paragraph, this gives $t(k-1)+1 =k'$.
\end{proof}

Recall that $\tau(M)=\infty$ if and only if $M$ is the uniform matroid
$U_{r,n}$ with $r\in\mathbb{N}$ and $n\in\{2r-1,2r,2r+1\}$.  The
$t$-expansion $U_{tr,2tr}$ of $U_{r,2r}$ has the same form, so
$\tau((U_{r,2r})^t)=\infty= \tau(U_{r,2r})$.  In contrast, since
$(U_{r,2r-1})^t= U_{tr,2tr-t}$, if $t>1$, then
$\tau(U_{r,2r-1})=\infty$ while $\tau((U_{r,2r-1})^t)\ne\infty$.
Likewise, for vertical connectivity we will see that if
$\kappa(M)=r(M)$, then $\kappa(M^t)$ might, but does not have to, be
$r(M^t)$.

\begin{thm}\label{thm:Tuttegap}
  For any $n\in\mathbb{N}$, there are matroids $M'$ and $N'$ with no
  coloops and with the following properties: each is self-dual, both
  are positroids, both are transversal matroids, they have the same
  configuration, and $\tau(N')-\tau(M')\geq n$.
\end{thm}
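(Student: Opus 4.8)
The plan is to reduce everything to a single small base case and then let $t$-expansion supply the magnification. Suppose we can produce two coloopless matroids $M$ and $N$ on a common ground set that are self-dual, transversal, positroids, that share a configuration, and that satisfy $\tau(N)-\tau(M)=d$ for some fixed $d\geq 1$ with both connectivities finite. Then I would set $M'=M^t$ and $N'=N^t$ for $t=\lceil n/d\rceil$, using the \emph{same} sets $S_e$ for both expansions so that $M'$ and $N'$ lie on the common ground set $S_{E(M)}$. Every demanded property of $M'$ and $N'$ is then inherited: having no coloops follows from Lemma \ref{lem:flatstructure1} (as $E(M)$ is cyclic, so is $S_{E(M)}=E(M^t)$); sameness of configuration is Lemma \ref{lem:expandconfig}; self-duality is Lemma \ref{lem:dual}, combined with Lemma \ref{lem:kexpiso} for the merely-isomorphic case (from $N\cong N^*$ we get $N^t\cong (N^*)^t=(N^t)^*$); transversality is Corollary \ref{cor:transv} (or Theorem \ref{thm:trcl}); and being a positroid is Theorem \ref{thm:poscl}. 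Finally, since $\tau(M),\tau(N)<\infty$, Theorem \ref{thm:connkexp} gives $\tau(N')-\tau(M')=\bigl(t(\tau(N)-1)+1\bigr)-\bigl(t(\tau(M)-1)+1\bigr)=t\,d\geq n$.

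For the base case I would take the two rank-$3$ matroids $M$ and $N$ on $[6]$ from Figure \ref{fig:configexamples}, which already share a configuration and satisfy $\tau(M)=2$ and $\tau(N)=3$, so $d=1$ and $t=n$ suffices. Both are connected with six elements, hence loopless and coloopless. Self-duality I would verify directly from Lemma \ref{lem:dualcy}: complementing the cyclic flats of $M$ returns the same labelled family, so $M=M^*$, whereas for $N$ the complemented family is two three-point lines meeting in a point together with a free point, which is isomorphic to $N$, giving $N\cong N^*$.

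The remaining and most delicate task is to certify that both seeds are \emph{simultaneously} transversal and positroids. For transversality I would exhibit rank-$1$ presentations: $(\{1,2,3\},\{4,5,6\},[6])$ realizes $M$, and $(\{1,3,5,6\},\{2,3,6\},\{4,5,6\})$ realizes $N$; in each case one checks with Hall's condition that the partial transversals are exactly the independent sets, i.e.\ that the only dependent triples are the two prescribed lines. For the positroid property I would invoke Theorem \ref{thm:char}: the only proper connected flats with at least two elements are the two lines, and for each such line the nonsingleton components of the corresponding contraction form the complementary triple; the order $1,2,3,4,5,6$ makes those triples cyclic intervals disjoint from the respective lines for $M$, and the order $1,4,5,6,2,3$ does the same for $N$.

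The genuinely load-bearing step is this base case: assembling one pair of small matroids that meets all five structural requirements at once while still showing a strict connectivity gap. Once such a pair is in hand, the magnification to an arbitrary gap is mechanical, since each structural property and the exact value of $\tau$ transfer through $t$-expansion by the results of Section \ref{section:texpansion} and Theorem \ref{thm:connkexp}.
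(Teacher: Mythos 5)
Your proposal is correct and follows essentially the same route as the paper: take the two matroids of Figure \ref{fig:configexamples} as the seed pair (with $\tau(M)=2$, $\tau(N)=3$), apply $n$-expansion, and invoke Theorem \ref{thm:connkexp} together with Lemmas \ref{lem:dual}, \ref{lem:expandconfig}, Corollary \ref{cor:transv}, and Theorem \ref{thm:poscl} to transfer the gap and the structural properties. The only difference is that you explicitly verify the base-case claims (self-duality via complementing cyclic flats, the transversal presentations, and the positroid orders $1,2,3,4,5,6$ and $1,4,5,6,2,3$), which the paper simply asserts; your verifications are correct.
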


\begin{proof}
  Let $M$ and $N$ be the matroids shown in Figure
  \ref{fig:configexamples}.  Both are positroids, transversal, and
  self-dual.  Now $\tau(N)=3$ and $\tau(M)=2$, so
  $\tau(N^n)-\tau(M^n)=2n+1-(n+1)=n$ for any $n\in\mathbb{N}$.  The
  remaining assertions hold by Lemma \ref{lem:dual}, Theorem
  \ref{thm:poscl}, Corollary \ref{cor:transv}, and Lemma
  \ref{lem:expandconfig}.
\end{proof}

\section{Vertical connectivity}\label{section:vertical}

In this section, we show that if $\kappa(M)<r(M)$, then $\kappa(M)$
and $\kappa(M^t)$ are related in a manner like that for $\tau(M)$ and
$\tau(M^t)$ in Theorem \ref{thm:connkexp}.
We also give a counterpart
of Theorem \ref{thm:Tuttegap} for vertical connectivity.

We start with an example of the case not covered by Theorem
\ref{thm:vertkexp}.  The matroids $M$ and $N$ shown in Figure
\ref{fig:kappa=rank} both have vertical connectivity $3$ since $E(M)$
is not the union of two hyperplanes of $M$, and likewise for $N$.
However, $M^2$ has vertical $5$-separations, such as
$(X,\overline{X})$ with $X=S_1\cup S_2\cup S_3\cup \{6\}$, while
$\kappa(N^2)=6=r(N^2)$ since $E(N^2)$ is not a union of two
hyperplanes of $N^2$.  Thus, $\kappa(M)=\kappa(N)$, but
$\kappa(M^2)\ne\kappa(N^2)$.

\begin{figure}
  \centering
  \begin{tikzpicture}[scale=1]
    \filldraw (0,0.5) node[above] {\small$1$} circle (2pt);%
    \filldraw (1,0.75) node[above] {\small$2$} circle (2pt);%
    \filldraw (2,1) node[above] {\small$3$} circle (2pt);%
    \filldraw (1,0.25) node[above] {\small$4$} circle (2pt);%
    \filldraw (2,0) node[above] {\small$5$} circle (2pt);%
    \filldraw (2.5,0.5) node[above] {\small$6$} circle (2pt);%
    \draw[thick](0,0.5)--(2,0);%
    \draw[thick](0,0.5)--(2,1);%
    \node at (1,-0.75) {$M$};%
    \filldraw (4,0.5) node[above] {\small$1$} circle (2pt);%
    \filldraw (5,0.75) node[above] {\small$2$} circle (2pt);%
    \filldraw (6,1) node[above] {\small$3$} circle (2pt);%
    \filldraw (5,0.25) node[above] {\small$4$} circle (2pt);%
    \filldraw (6,0) node[above] {\small$5$} circle (2pt);%
    \filldraw (6.5,0.1) node[above] {\small$6$} circle (2pt);%
    \filldraw (6.5,0.9) node[above] {\small$7$} circle (2pt);%
    \draw[thick](4,0.5)--(6,0);%
    \draw[thick](4,0.5)--(6,1);%
    \node at (5,-0.75) {$N$};%
  \end{tikzpicture}
  \caption{ Two rank-$3$ matroids with $\kappa(M)=\kappa(N)$, but
    $\kappa(M^2)\ne\kappa(N^2)$.}
  \label{fig:kappa=rank}
\end{figure}
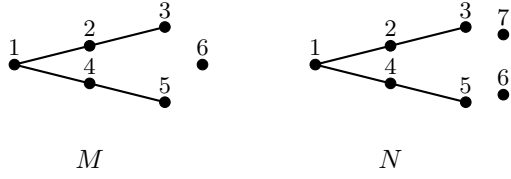

\begin{thm}\label{thm:vertkexp}
  For a matroid $M$, if $\kappa(M)<r(M)$, then
  $\kappa(M^t)= t(\kappa(M)-1)+1$ for each integer $t\in\mathbb{N}$.
\end{thm}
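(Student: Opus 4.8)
The plan is to follow the proof of Theorem~\ref{thm:connkexp}, but with the size conditions of Tutte $k$-separations replaced by the rank conditions of vertical $k$-separations. Since $\kappa(M)=\kappa(M\del L)$ for the set $L$ of loops and $M^t\del S_L=(M\del L)^t$ by Lemma~\ref{lem:texpminors}, I may assume $M$ is loopless, so $r_{M^t}(S_e)=t$ for each $e$. Write $\kappa(M)=k$ and $\kappa(M^t)=k'$. Because $k<r(M)$, $M$ has a vertical $k$-separation $(X,\overline X)$; by Lemma~\ref{lem:rank}, $(S_X,S_{\overline X})$ has both ranks at least $tk\ge t(k-1)+1$ and connectivity $t\lambda_M(X)\le t(k-1)$, so it is a vertical $(t(k-1)+1)$-separation. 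This gives $k'\le t(k-1)+1$ and also $k'<r(M^t)$. For the reverse inequality I would use the reformulation (easy to check from the definitions) that $k'-1$ is the minimum of $\lambda_{M^t}(A)$ over sets $A$ with both $A$ and $\overline A$ non-spanning, and aim to show this minimum is attained at a block-respecting set $A=S_Y$. For such a set $\lambda_{M^t}(S_Y)=t\lambda_M(Y)$ and the two rank conditions pass to $M$, so $(Y,\overline Y)$ is a vertical $(\lambda_M(Y)+1)$-separation of $M$; hence $\lambda_M(Y)\ge k-1$ and $k'=t\lambda_M(Y)+1\ge t(k-1)+1$.

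The new ingredient is that the elements of each block $S_e$ are clones, so for a fixed separation that splits $S_e$, the rank of either side is a concave function of the number of clones of $S_e$ it contains; a one-line submodularity argument using the transposition automorphism of two clones gives concavity, and the increments lie in $\{0,1\}$. Writing $\rho(i)$ and $\bar\rho(t-i)$ for the ranks of the two sides when $i$ clones of $S_e$ lie on the first side, the function $\lambda(i)=\rho(i)+\bar\rho(t-i)-r(M^t)$ is concave in $i$ and so attains its minimum over $\{0,\dots,t\}$ at $i=0$ or $i=t$; that is, pushing all of $S_e$ to one side does not increase $\lambda$. I would then choose $A$ to minimize $\lambda_{M^t}$ among sets with both sides non-spanning and, among those, to minimize the total splitting $\sum_e\min(|S_e\cap A|,\,t-|S_e\cap A|)$, and show that if any block is split we can resolve it to produce another minimizer with strictly smaller splitting, contradicting the choice of $A$.

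The main obstacle, and the point of departure from the Tutte case, is that a resolution must keep both sides non-spanning; this is the rank analogue of the size bookkeeping at the end of the proof of Theorem~\ref{thm:connkexp}. Pushing $S_e$ entirely to one side shrinks the opposite side, so its non-spanning condition is automatic, and the only risk is that the receiving side becomes spanning. Here I expect the explicit saturation profiles $\rho(i)=\rho(0)+\min(i,p)$ and $\bar\rho(i)=\bar\rho(0)+\min(i,q)$, with $p,q\le t$, to do all the work. If $S_e$ cannot be pushed to either side without creating a spanning set, a short computation with these profiles should give $\lambda_{M^t}(A)\ge t(r(M)-1)\ge t(k-1)$, so that $A$ itself already certifies $k'-1\ge t(k-1)$ and we are done without resolving. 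Otherwise some side can absorb $S_e$ while staying non-spanning, and the same profiles should show that this admissible resolution has $\lambda$ no larger than $\lambda_{M^t}(A)$; since $A$ is a minimizer, equality holds, yielding a minimizer with less splitting and the desired contradiction. The only genuine computation is this elementary arithmetic with $\rho$ and $\bar\rho$ that determines exactly when a side spans, which I expect to be routine.
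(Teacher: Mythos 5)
Your proposal is correct, and it takes a genuinely different route from the paper's proof. The paper obtains the lower bound $\kappa(M^t)\ge t(\kappa(M)-1)+1$ by induction on $|E(M)|$: it takes a vertical $k'$-separation $(A,\overline A)$ with the fewest split blocks, runs a clone/coloop case analysis to show that only one block $S_e$ can be split, notes that $(A-S_e,\overline A-S_e)$ is then a vertical $(k'-t)$-separation of $M^t\del S_e=(M\del e)^t$, and closes with Lemma~\ref{lem:decrcon} (applied $t$ times to $M^t$ and once to $M$) together with the induction hypothesis applied to $M\del e$. You avoid both the induction and Lemma~\ref{lem:decrcon}: your reformulation is valid, since any pair with both sides non-spanning is automatically a vertical $(\lambda+1)$-separation, so $\kappa-1$ is the minimum of $\lambda$ over such pairs; and concavity of $i\mapsto\rho(i)$ does follow from submodularity plus the clone property, yielding the profiles $\rho(i)=\rho(0)+\min(i,p)$ and $\bar\rho(j)=\bar\rho(0)+\min(j,q)$ with $p,q\le t$. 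The arithmetic you deferred does close, in exactly the two ways you predicted: (a) if both pushes make the receiving side spanning, i.e.\ $\rho(0)+p=r(M^t)=\bar\rho(0)+q$, then non-spanning of the current pair forces $i_0<p$ and $t-i_0<q$, so $\lambda(i_0)=r(M^t)+t-p-q\ge r(M^t)-t=t(r(M)-1)>t(k-1)$, the direct certificate (indeed this exceeds the already-proved upper bound, so this case is vacuous); (b) if the push into $\overline A$ is inadmissible, i.e.\ $\bar\rho(t)=r(M^t)$, then $\bar\rho(t-i_0)<r(M^t)$ forces $\bar\rho(t-i_0)=\bar\rho(0)+(t-i_0)$, whence $\lambda(t)-\lambda(i_0)=[\rho(t)-\rho(i_0)]-(t-i_0)\le 0$, so the remaining admissible push never increases $\lambda$; and if both pushes are admissible, your concavity statement selects one with $\lambda(0)$ or $\lambda(t)$ at most $\lambda(i_0)$. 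One wording caution on (b): your phrase ``this admissible resolution has $\lambda$ no larger'' needs the existential reading, since an admissible push can strictly increase $\lambda$ (e.g.\ $p=0$, $q=t$, where pushing into $\overline A$ raises $\lambda$ by $i_0$ while the opposite push is admissible and non-increasing). As for what each route buys: yours is self-contained and non-inductive, and it pinpoints exactly where the hypothesis $\kappa(M)<r(M)$ enters (case (a)); the paper's route is lighter on arithmetic because it delegates work to Lemma~\ref{lem:decrcon} and to induction, and it reuses the same minimal-splitting clone analysis that drives Theorem~\ref{thm:connkexp}, keeping the two connectivity proofs structurally parallel.
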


\begin{proof}
  Since $\kappa(M)=\kappa(M\del L)$ where $L$ is the set of loops of
  $M$, we may assume that $M$ has no loops.  If $M$ is disconnected,
  then so is $M^t$, and so, since $M$ has no loops, $\kappa(M)=1$ and
  $\kappa(M^t)=1= t(\kappa(M)-1)+1$.

  We induct on $|E(M)|$.  Since $\kappa(M)<r(M)$, the least that
  $|E(M)|$ can be is $2$, in which case $M$ is $U_{2,2}$, so the
  equality in the theorem holds by the remark on disconnected loopless
  matroids.  Now assume that $|E(M)|\geq 3$ and that the result holds
  for all matroids $N$ for which $\kappa(N)<r(N)$ and
  $|E(N)|=|E(M)|-1$.  We may also assume that $M$ is connected.
  
  Set $\kappa(M)=k$ and $\kappa(M^t)=k'$.  Let $(X,\overline{X})$ be a
  vertical $k$-separation of $M$.  Then
  $$\lambda_{M^t}(S_X) =t\cdot \lambda_M(X) \leq t(k-1).$$  Also,
  $r_{M^t}(S_X)=t\cdot r(X)\geq t\cdot k \geq t(k-1)+1$ and likewise for
  $r_{M^t}({\overline{S_X}})$, so $(S_X, \overline{S_X})$ is a
  vertical $(t(k-1)+1)$-separation of $M^t$.  Thus,
  $k'\leq t(k-1)+1<r(M^t)$.

  If $M^t$ has a vertical $k'$-separation of the form
  $(S_Y, \overline{S_Y})$, then we get $k'\geq t(k-1)+1$ by adapting
  the argument in the last paragraph of the proof of Theorem
  \ref{thm:connkexp}, from which the desired equality follows.  Thus,
  assume that no vertical $k'$-separation of $M^t$ has the form
  $(S_Y, \overline{S_Y})$.
  For a vertical $k'$-separation $(A,\overline{A})$ of $M^t$, set
  $$\sigma(A)=\{e\in E\,:\, S_e\cap A\ne\emptyset \text{ and }
  S_e\cap \overline{A}\ne\emptyset\}.$$ Let $(A,\overline{A})$ be a
  vertical $k'$-separation of $M^t$ with $|\sigma(A)|$ minimal.
    
  Assume that $e\in \sigma(A)$.  Distinct elements of $S_e$ are clones
  in $M^t$, so the same holds in any minor of $M^t$ that they are in.
  This gives three options:
  \begin{enumerate}
  \item all elements of $S_e\cap A$ are coloops of $M^t|A$ and no
    element of $S_e\cap \overline{A}$ is a coloop of
    $M^t|\overline{A}$ (or the reverse, switching $A$ and
    $\overline{A}$),
  \item no element of $S_e$ is a coloop of either $M^t|A$ or
    $M^t|\overline{A}$, or
  \item each element of $S_e$ is a coloop of either $M^t|A$ or
    $M^t|\overline{A}$.
  \end{enumerate}
  In case (1), set $B=A-S_e$.  Then
  $\lambda_{M^t}(B)=\lambda_{M^t}(A)- |S_e\cap A|$.  Also,
  $B\ne\emptyset$, for otherwise we would have
  $r_{M^t}(\overline{A})=r_{M^t}(\overline{B})=r(M^t)$, and so
  $\lambda_{M^t}(A)< k'$ would give $r_{M^t}(A)< k'$, contrary to
  $(A,\overline{A})$ being a vertical $k'$-separation.  Thus,
  $(B,\overline{B})$ is a vertical $(k'-|A\cap S_e|)$-separation of
  $M^t$, contrary to $k'$ being $\kappa(M^t)$.  So case (1) does not
  occur.

  In case (2), if $x\in S_e\cap A$, then $r_{M^t}(A-x)=r_{M^t}(A)$ and
  $r_{M^t}(\overline{A-x})=r_{M^t}(\overline{A})$, so
  $(A-x,\overline{A-x})$ is a vertical $k'$-separation of $M^t$.  If
  $S_e\cap (A-x)\ne\emptyset$, then, since no element of
  $S_e\cap \overline{A-x}$ is a coloop of $M^t|\overline{A-x}$ and
  case (1) does not occur, no element of $S_e\cap (A-x)$ is a coloop
  of $M^t|A-x$, so we can repeat the argument.  It follows that
  $(A-S_e,\overline{A-S_e})$ is a vertical $k'$-separation of $M^t$,
  contrary to $|\sigma(A)|$ being minimal.  Thus, case (2) does not
  occur.
  
  We now focus on case (3). We claim that $\sigma(A)= \{e\}$.  Assume
  that $f\in \sigma(A)-e$.  Let
  $$m=\min\{|A\cap S_e|, |\overline{A}\cap S_e|,|A\cap S_f|,
  |\overline{A}\cap S_f|\}.$$ By symmetry, we may assume that
  $|A\cap S_e|=m$.  Let $B=(A-S_e)\cup V$ where
  $V\subseteq \overline{A}\cap S_f$ and $|V|=m$.  The elements removed
  from $A$ are coloops of $M|A$, and likewise for $\overline{A}$, so
  $r_{M^t}(B)\leq r_{M^t}(A)$ and
  $r_{M^t}(\overline{B})\leq r_{M^t}(\overline{A})$.  Also, exchanging
  elements one at a time and recalling that case (1)
  does not occur shows that
  $r_{M^t}(A)-1\leq r_{M^t}(B)$ and
  $r_{M^t}(\overline{A})-1\leq r_{M^t}(\overline{B})$.
    It follows that $r_{M^t}(B)= r_{M^t}(A)$ and
  $r_{M^t}(\overline{B})= r_{M^t}(\overline{A})$ since $M^t$ has no
  vertical $(k'-1)$-separation.  Thus, $(B,\overline{B})$ is a
  vertical $k'$-separation of $M^t$; also, $|\sigma(B)|<|\sigma(A)|$,
  which contradicts the choice of $(A,\overline{A})$.  Thus,
  $\sigma(A)= \{e\}$.
  
  Consider $M^t\del S_e$, which is $(M\del e)^t$.  Since $M$ is
  connected, $r(M^t\del S_e) = r(M^t)$.  We have
  \begin{align*}
    r_{M^t}(A-S_e)+r_{M^t}(\overline{A}-S_e)
    & =r_{M^t}(A)+r_{M^t}(\overline{A})-t\\
    & <r(M^t)+k'-t\\
    & =r(M^t\del S_e)+k'-t.
  \end{align*}
  Also, $r(A-S_e)>k'-t$ and $r(\overline{A}-S_e)> k'-t$, so
  $(A-S_e,\overline{A}-S_e)$ is a vertical $(k'-t)$-separation of
  $M^t\del S_e$.  Applying Lemma \ref{lem:decrcon} to $M^t$ a total of
  $t$ times shows that $k'-t\leq\kappa(M^t\del S_e)$, so having a
  vertical $(k'-t)$-separation of $M^t\del S_e$ gives the equality
  $k'-t=\kappa(M^t\del S_e)$.  Since $M$ is connected,
  $r(M\del e)=r(M)$, so $ \kappa(M\del e)<r(M\del e)$.  Thus, the
  induction hypothesis applies to $M\del e$ and gives
  $\kappa(M^t\del S_e) =t(\kappa(M\del e) -1)+1$.  That equality and
  Lemma \ref{lem:decrcon} applied to $M$ give
  $\kappa(M^t\del S_e) \geq t(k -2)+1$, so
  $$k'=t+\kappa(M^t\del S_e) 
  \geq t(k-1)+1, $$ which is the inequality we needed to show in order
  to get $k'= t(k-1)+1$.
\end{proof}

\begin{thm}\label{thm:vertgap}
  For any $n\in\mathbb{N}$, there are matroids $M'$ and $N'$ with no
  coloops and with the following properties: each is self-dual, both
  are positroids, both are transversal matroids, they have the same
  configuration, and $\kappa(N')-\kappa(M')\geq n$.
\end{thm}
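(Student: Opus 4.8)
The plan is to follow the proof of Theorem~\ref{thm:Tuttegap} almost verbatim, using the same two small matroids $M$ and $N$ of Figure~\ref{fig:configexamples} as seeds and magnifying the gap by $t$-expansion. As recorded in the proof of Theorem~\ref{thm:Tuttegap}, these matroids share a configuration and are self-dual positroids that are transversal, so upon setting $M'=M^n$ and $N'=N^n$, all of the listed structural properties transfer to $M'$ and $N'$ by Lemma~\ref{lem:dual}, Theorem~\ref{thm:poscl}, Corollary~\ref{cor:transv}, and Lemma~\ref{lem:expandconfig}, exactly as in that earlier proof. It therefore remains only to establish the inequality $\kappa(N')-\kappa(M')\geq n$.

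For $M'$ I would apply Theorem~\ref{thm:vertkexp}. Since $E(M)$ is the union of its two $3$-point lines, we have $\kappa(M)=2<3=r(M)$, so the theorem gives $\kappa(M^n)=n(\kappa(M)-1)+1=n+1$. The difficulty is $N$: because $E(N)$ is not the union of two proper flats, $\kappa(N)=3=r(N)$, so Theorem~\ref{thm:vertkexp} does not apply, and the remarks following it (illustrated by Figure~\ref{fig:kappa=rank}) caution that the formula can genuinely fail when $\kappa=r$. This is the main obstacle, and I expect the cleanest route is to avoid computing $\kappa(N^n)$ exactly and instead bound it from below.

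The key observation is that Tutte connectivity supplies the needed lower bound. Because $r_M(X)\leq |X|$ for every $X\subseteq E(M)$, every vertical $k$-separation is a $k$-separation; equivalently, a matroid with no $k$-separation has no vertical $k$-separation. Now $\tau(N)=3<\infty$, so Theorem~\ref{thm:connkexp} yields $\tau(N^n)=n(\tau(N)-1)+1=2n+1$, which means that $N^n$ has no $k$-separation for any $k\leq 2n$. Consequently $N^n$ has no vertical $k$-separation for $k\leq 2n$, and hence $\kappa(N^n)\geq 2n+1$ (whether or not $N^n$ admits any vertical separation at all).

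Combining the two bounds gives $\kappa(N')-\kappa(M')\geq (2n+1)-(n+1)=n$, as required. The only point I expect to need care is the passage from ``no $k$-separation'' to ``no vertical $k$-separation,'' which must be stated so as to sidestep the exceptional uniform matroids, where $\tau=\infty$ while $\kappa=r$; phrasing it as the contrapositive above, applied to $N^n$ (for which $\tau(N^n)$ is finite), avoids that pitfall entirely and keeps the argument parallel to Theorem~\ref{thm:Tuttegap}.
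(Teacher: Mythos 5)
Your proof is correct, and it handles the crux of the theorem --- the matroid $N$, to which Theorem~\ref{thm:vertkexp} does not apply because $\kappa(N)=r(N)=3$ --- by a genuinely different route than the paper. The paper replaces the seeds by their $2$-expansions $M_0=M^2$ and $N_0=N^2$: it exhibits the vertical $5$-separation $(X,\overline{X})$ of $N_0$ with $X=S_1\cup S_2\cup S_3\cup\{6\}$ and checks by hand that every $Y$ with $r_{N_0}(Y)\leq 4$ has spanning complement, so $\kappa(N_0)=5$. Since then $\kappa(M_0)<r(M_0)$ and $\kappa(N_0)<r(N_0)$, Theorem~\ref{thm:vertkexp} applies to all further expansions and yields the \emph{exact} values $\kappa(M_0^t)=2t+1$ and $\kappa(N_0^t)=4t+1$, hence a gap of $2t$; choosing $t\geq n/2$ finishes. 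You instead work with $M^n$ and $N^n$ directly and never compute $\kappa(N^n)$: you observe that every vertical $k$-separation is a $k$-separation (as $r(X)\leq|X|$), so that $\kappa(N^n)\geq\min\{\tau(N^n),\,r(N^n)\}$, and then Theorem~\ref{thm:connkexp} gives $\tau(N^n)=2n+1$ while $r(N^n)=3n\geq 2n+1$. Your route is shorter, reuses a theorem already proved for the Tutte-connectivity gap, and avoids the rank verification for $N_0$ that the paper leaves to the reader; what it gives up is the exact value of the vertical connectivity of the second family, which the paper's bootstrapping (pre-expand until $\kappa<r$, a phenomenon codified in Theorem~\ref{thm:notmaxkappa}) does deliver, but the lower bound is all the statement requires. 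The only step worth making fully explicit is the one hiding behind your parenthetical ``whether or not $N^n$ admits any vertical separation at all'': if $E(N^n)$ is not a union of two proper flats, the definition gives $\kappa(N^n)=r(N^n)=3n$, and $3n\geq 2n+1$ precisely because $n\geq 1$.
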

 
\begin{proof}
  Let $M_0$ and $N_0$ be the $2$-expansions of the matroids $M$ and
  $N$, respectively, shown in Figure \ref{fig:configexamples}.  Now
  $\kappa(M_0)=2(2-1)+1=3$ by Theorem \ref{thm:vertkexp}, but that
  result does not apply to $N_0$.  Note that $(X,\overline{X})$, where
  $X=S_1\cup S_2\cup S_3\cup\{6\}$, is a vertical $5$-separation of
  $N_0$.  One can check that if $r_{N_0}(Y)\leq 4$, then
  $r_{N_0}(\overline{Y})=6$, from which it follows that $N_0$ has no
  vertical $j$-separation with $j<5$, so $\kappa(N_0)=5$.  Hence, for
  any $t\in\mathbb{N}$, we have, for the $t$-expansions,
  $\kappa(N_0^t)-\kappa(M_0^t)=4t+1-(2t+1)=2t$. The other properties
  hold as in the proof of Theorem \ref{thm:Tuttegap}, so choosing $t$
  with $t\geq n/2$ proves the result.
\end{proof}

Theorem \ref{thm:vertkexp} applies when $\kappa(M)<r(M)$.  To close
this section, we identify when that condition applies to the
$t$-expansions of $M$ where $t>1$.  Recall that $\kappa(M)<r(M)$ if
and only if $E(M)$ is the union of two proper flats of $M$.

\begin{thm}\label{thm:notmaxkappa}
  For a matroid $M$, the following statements are equivalent:
  \begin{itemize}
  \item[(1)] there are proper flats $F_1$ and $F_2$ of $M$ for which
    $|E(M)-(F_1\cup F_2)|\leq 1$,
  \item[(2)] $\kappa(M^t)<r(M^t)$ for all $t\geq 2$,
  \item[(3)] $\kappa(M^t)<r(M^t)$ for some $t\geq 2$.
  \end{itemize}
  Thus, if $E(M)$ is not the union of two proper flats of $M$ and a
  singleton subset of $E(M)$, then $\kappa(M^t)=r(M^t)$ for all
  $t\in\mathbb{N}$.
\end{thm}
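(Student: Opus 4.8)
The plan is to prove the equivalence of the three statements by establishing $(1)\Rightarrow(2)\Rightarrow(3)\Rightarrow(1)$, since $(2)\Rightarrow(3)$ is trivial. The key technical fact throughout is the criterion recalled just before the statement: $\kappa(N)<r(N)$ if and only if $E(N)$ is the union of two proper flats of $N$. So the entire proof reduces to understanding when $E(M^t)=S_{E(M)}$ can be written as a union of two proper flats of $M^t$, which I can analyze using the lemmas relating flats of $M$ to flats of $M^t$ (Lemmas \ref{lem:flatstructure1}, \ref{lem:flatstructure2}, and \ref{lem:flatstructure3}).

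For $(1)\Rightarrow(2)$, suppose $F_1$ and $F_2$ are proper flats of $M$ with $|E(M)-(F_1\cup F_2)|\leq 1$. By Lemma \ref{lem:flatstructure1}, $S_{F_1}$ and $S_{F_2}$ are proper flats of $M^t$, and $S_{F_1}\cup S_{F_2}=S_{F_1\cup F_2}$ covers all of $S_{E(M)}$ except possibly one block $S_e$ (when $E(M)-(F_1\cup F_2)=\{e\}$). In that case I want to enlarge one of the two flats to absorb $S_e$: since $t\geq 2$, I can write $S_e$ as a union of smaller pieces and apply Lemma \ref{lem:flatstructure3} to adjoin fewer than $t$ of its elements to $S_{F_1}$, keeping it a proper flat, while the remaining at-most-$t-1$ elements get adjoined to $S_{F_2}$, again a proper flat by Lemma \ref{lem:flatstructure3}. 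The point is that splitting a single $t$-element block $S_e$ between the two flats is exactly what requires $t\geq 2$ and is feasible precisely because each piece has size $<t$. This produces two proper flats of $M^t$ whose union is $S_{E(M)}$, giving $\kappa(M^t)<r(M^t)$.

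For the hard direction, $(3)\Rightarrow(1)$, I assume $\kappa(M^t)<r(M^t)$ for some $t\geq 2$, so there are proper flats $G_1,G_2$ of $M^t$ with $G_1\cup G_2=S_{E(M)}$. I apply the map $\theta$ from Lemma \ref{lem:flatstructure2} to each: $F_i=\theta(G_i)$ is a proper flat of $M$. The goal is to show $|E(M)-(F_1\cup F_2)|\leq 1$. An element $e\in E(M)$ lies outside $F_1\cup F_2$ precisely when $S_e\not\subseteq G_1$ and $S_e\not\subseteq G_2$; since $G_1\cup G_2=S_{E(M)}\supseteq S_e$, this means $S_e$ is genuinely split between $G_1$ and $G_2$, with nonempty intersection with each but contained in neither. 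I expect this to be the main obstacle: I must rule out two or more blocks being split simultaneously. The leverage is a counting or rank argument — each split block $S_e$ forces elements of $S_e$ to be coloops of $M^t|G_i$ by Lemma \ref{lem:flatstructure2}, and having two such split blocks should let me trim both $G_1$ and $G_2$ down (removing coloops does not lower the rank) to produce proper flats whose union still covers $S_{E(M)}$ but contradicts properness, or else derive an inequality on ranks that cannot hold. Concretely, I would argue that if $e$ and $f$ are two distinct split blocks, I can move all of $S_e$ into $G_1$ and all of $S_f$ into $G_2$ (using that the extra elements are coloops, hence addable while keeping flats proper via Lemma \ref{lem:flatstructure3} type reasoning), reducing to the case where at most one block is split, which directly yields statement (1).

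Finally, the concluding sentence is the contrapositive of $(3)\Rightarrow(1)$ combined with $(1)\Rightarrow(2)$: if $E(M)$ is not the union of two proper flats and a singleton, then (1) fails, so by the equivalence $\kappa(M^t)=r(M^t)$ for every $t\geq 2$; and for $t=1$ the hypothesis says exactly that $E(M)$ is not the union of two proper flats (the singleton-free special case), giving $\kappa(M)=r(M)$ by the criterion. Thus $\kappa(M^t)=r(M^t)$ for all $t\in\mathbb{N}$.
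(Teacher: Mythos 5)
Your overall strategy coincides with the paper's: reduce to the criterion that $\kappa(N)<r(N)$ iff $E(N)$ is a union of two proper flats, prove $(1)\Rightarrow(2)$ by splitting the single leftover block $S_e$ between $S_{F_1}$ and $S_{F_2}$ via Lemma \ref{lem:flatstructure3}, and prove $(3)\Rightarrow(1)$ by applying $\theta$ from Lemma \ref{lem:flatstructure2} and showing that at most one block can be split between the two covering flats. Your $(1)\Rightarrow(2)$ is essentially the paper's argument and is correct; the one point you leave implicit is why $S_{F_i}\cup A_i$ is a \emph{proper} flat, which needs the observation that $r(M^t)-r_{M^t}(S_{F_i})=t\bigl(r(M)-r_M(F_i)\bigr)\geq t>|A_i|$.

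The genuine gap is in $(3)\Rightarrow(1)$, precisely at the step you yourself flag as the main obstacle. The mechanism you propose --- move all of $S_e$ into $G_1$ and all of $S_f$ into $G_2$, ``using that the extra elements are coloops, hence addable while keeping flats proper'' --- fails as stated. Enlarging a flat means taking a closure, and $\cl_{M^t}(G_1\cup S_e)$ can have rank up to $r_{M^t}(G_1)+|S_e-G_1|$; nothing in your setup bounds $r(M^t)-r_{M^t}(G_1)$ below by $|S_e-G_1|$ (indeed $r_{M^t}(G_1)=t\cdot r_M(\theta(G_1))+|G_1-S_{\theta(G_1)}|$, and when blocks are split the coloop part $|G_1-S_{\theta(G_1)}|$ is large), so the enlarged set can be all of $E(M^t)$ and properness is lost. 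Your parenthetical ``removing coloops does not lower the rank'' is also backwards: removing a coloop of $M^t|G_1$ lowers the rank by exactly one --- and that loss is exactly the resource the correct argument spends. The paper fixes this with a rank-compensated exchange organized by maximality: choose the covering proper flats $X_1,X_2$ maximizing $|\theta(X_1)\cup\theta(X_2)|$, trim so that $(X_i-S_{\theta(X_i)})\cap S_{\theta(X_j)}=\emptyset$, note that $|D|\geq 2$ forces (by pigeonhole) $|X_1-S_{\theta(X_1)}|\geq t$ for one of the flats, and then pick $Z\subseteq X_1-(S_{\theta(X_1)}\cup S_e)$ with $|Z|=|S_e\cap X_2|$. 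Setting $Y_1=\cl_{M^t}((X_1-Z)\cup S_e)$ and $Y_2=\cl_{M^t}((X_2-S_e)\cup Z)$, the removed elements are coloops and are at least as numerous as what is added, so $r_{M^t}(Y_i)\leq r_{M^t}(X_i)$; the flats stay proper, still cover, and $\theta(Y_1)\cup\theta(Y_2)$ strictly grows, contradicting maximality. Your symmetric trade between $S_e$ and $S_f$ alone cannot substitute for this: if, say, $|S_e\cap G_1|=|S_f\cap G_1|=1$ with $t$ large, then $G_1$ has only one coloop outside $S_e$ available to pay for absorbing the $t-1$ elements of $S_e-G_1$, so the exchange cannot be made rank-nonincreasing. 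The missing ingredients, concretely, are (i) compensation by an equal-sized set of coloops rather than bare absorption, and (ii) the counting/pigeonhole step showing such a set exists whenever two or more blocks are split.
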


\begin{proof}
  Assume that statement (1) holds.  If $E(M)=F_1\cup F_2$, then
  statement (2) follows from Theorem \ref{thm:vertkexp}, so assume
  instead that $E(M)-(F_1\cup F_2)=\{e\}$.  By Lemma
  \ref{lem:flatstructure1}, both $S_{F_1}$ and $S_{F_2}$ are flats of
  $M^t$.  Now $r(M^t)-r_{M^t}(S_{F_1})=t(r(M)-r_M(F_1))\geq t$, and
  likewise for $S_{F_2}$.  Since $S_{F_1}$ and $S_{F_2}$ are flats of
  $M^t$, if $\{A,B\}$ is a partition of $S_e$, then $S_{F_1}\cup A$
  and $S_{F_2}\cup B$ are proper flats of $M^t$ by Lemma
  \ref{lem:flatstructure3}, so statement (2) holds.

  Statement (2) obviously implies statement (3), so now assume that
  statement (3) holds.  We use $\theta(X)$ as it was defined in Lemma
  \ref{lem:flatstructure2}.  By statement (3), for some $t\geq 2$, the
  set $E(M^t)$ is the union of two proper flats of $M^t$; choose such
  flats $X_1$ and $X_2$ to maximize $|\theta(X_1)\cup \theta(X_2)|$.
  The elements of $X_1-S_{\theta(X_1)}$ are coloops of $M^t|X_1$, so
  we may assume that
  $(X_1-S_{\theta(X_1)})\cap S_{\theta(X_2)} = \emptyset$, and
  likewise that
  $(X_2-S_{\theta(X_2)})\cap S_{\theta(X_1)} = \emptyset$.  Now
  $E(M^t)-(S_{\theta(X_1)}\cup S_{\theta(X_2)})=S_D$ for some
  $D\subseteq E(M)$.  We claim that $|D|\leq 1$.  Assume instead that
  $|D|\geq 2$.  Thus, either $|X_1-S_{\theta(X_1)}|\geq t$ or
  $|X_2-S_{\theta(X_2)}|\geq t$; by symmetry, we may assume that
  $|X_1-S_{\theta(X_1)}|\geq t$.  Assume that
  $(X_1-S_{\theta(X_1)})\cap S_e\ne\emptyset$.  Fix
  $Z\subseteq X_1-(S_{\theta(X_1)}\cup S_e)$ with $|Z|=|S_e\cap X_2|$,
  and let $Y_1=\cl_{M^t}((X_1-Z)\cup S_e)$ and
  $Y_2=\cl_{M^t}((X_2-S_e)\cup Z)$.  The elements removed in each case
  were coloops of $M^t|X_1$ or $M^t|X_2$, so
  $r_{M^t}(Y_1)\leq r_{M^t}(X_1)$ and $r_{M^t}(Y_2)\leq r_{M^t}(X_2)$,
  so $Y_1$ and $Y_2$ are proper flats of $M^t$ with
  $E(M^t) = Y_1\cup Y_2$, and, since $e\in \theta(Y_1)-\theta(X_1)$,
  we have
  $|\theta(Y_1)\cup \theta(Y_2)|> |\theta(X_1)\cup \theta(X_2)|$,
  contrary to the choice of $X_1$ and $X_2$.  Thus, $|D|\leq 1$, so
  the proper flats $\theta(X_1)$ and $\theta(X_2)$ of $M$ show that
  statement (1) holds.
\end{proof}

\section{Branch-width}\label{section:branch}

Branch-width is considerably more delicate to work with, and, as we
will show, there is no counterpart of the equalities in Theorems
\ref{thm:connkexp} and \ref{thm:vertkexp}; what we prove instead is an
upper bound on $bw(M^t)$ in terms of $bw(M)$, the form of which is
similar to those results.  Also, we prove counterparts of Theorems
\ref{thm:Tuttegap}, \ref{thm:vertgap}, and \ref{thm:notmaxkappa}.

\begin{thm}\label{thm:bwbd}
  For a matroid $M$ and $t\in\mathbb{N}$, we have
  $bw(M^t)\leq t(bw(M)-1)+1$.
\end{thm}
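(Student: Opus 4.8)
We want to prove $bw(M^t) \leq t(bw(M)-1)+1$. The plan is to take an optimal branch-decomposition $T_\phi$ of $M$, of width $bw(M)$, and convert it into a branch-decomposition of $M^t$ whose width is at most $t(bw(M)-1)+1$. The natural idea is to replace each leaf of $T$, labeled by some $e \in E(M)$, with a small gadget whose leaves are labeled by the $t$ elements of $S_e$, since $M^t$ replaces each element $e$ by the $t$-element clonal class $S_e$.

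\textbf{The construction.} First I would form a cubic tree $T'$ from $T$ as follows. For each $e \in E(M)$, let $\ell_e = \phi(e)$ be its leaf in $T$. Remove $\ell_e$ and attach in its place a cubic subtree (a ``caterpillar'' on $t$ leaves) whose leaves are bijectively labeled by the elements of $S_e$ via an injection $\phi':S_{E(M)} \to L(T')$. This gives a branch-decomposition $T'_{\phi'}$ of $M^t$. The edges of $T'$ fall into two types: the edges internal to the attached gadgets (newly created), and the edges that survive from $T$ (the original internal edges of $T$, where the old leaf-edges now lead into the gadgets). I would bound the width of each type separately.

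\textbf{Bounding the widths.} For an edge $e$ of $T$ that displayed $(X,\overline{X})$ with width $w_M(e) = \lambda_M(X)+1$, the corresponding edge in $T'$ displays $(S_X, S_{\overline{X}})$, and by the additivity of the connectivity function under $t$-expansion (from $\lambda_{M^t}(S_X) = t\cdot \lambda_M(X)$, which follows from Lemma \ref{lem:rank}), its width is
\begin{equation*}
  \lambda_{M^t}(S_X)+1 = t\cdot\lambda_M(X)+1 = t(w_M(e)-1)+1 \leq t(bw(M)-1)+1.
\end{equation*}
So every surviving edge has width at most $t(bw(M)-1)+1$. For the edges inside a gadget attached at $S_e$, each such edge displays a pair $(Y, \overline{Y})$ where $Y$ is a proper nonempty subset of $S_e$ on one side and everything else on the other; since the elements of $S_e$ are clones (indeed parallel if $e$ is not a loop, or loops otherwise) and $|S_e| = t$, one computes $\lambda_{M^t}(Y) \leq \min\{|Y|, t - |Y|\} \cdot (\text{something bounded}) $, and more directly $\lambda_{M^t}(Y) \le \lambda_{M^t}(S_e) + 1 \le t + 1$ is too weak, so I would instead argue that $r_{M^t}(Y) \le 1$ forces these widths to be small. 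The cleanest bound: a gadget edge separating $\emptyset \subsetneq Y \subsetneq S_e$ from the rest has $\lambda_{M^t}(Y) \le r_{M^t}(Y) \le 1$ (as $S_e$ spans rank at most $1$ if $e$ is not a loop), so its width is at most $2 \le t(bw(M)-1)+1$ provided $t(bw(M)-1) \ge 1$, which holds since $bw(M)\ge 1$ and when $bw(M)=1$ every element is a loop or coloop and the bound must be checked directly.

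\textbf{Main obstacle.} The genuinely delicate step is controlling the gadget edges, and more precisely handling the degenerate cases where $bw(M) = 1$ (so $t(bw(M)-1)+1 = 1$, forcing every element of $M^t$ to be a loop or coloop) and where an element $e$ is itself a loop or coloop of $M$. I expect the clean inequality $bw(M^t) \le t(bw(M)-1)+1$ to require that the gadget contributes width no larger than the bound coming from the surviving edges, and verifying this uniformly — especially that gadget edges never become the bottleneck — is where the real care lies. I would organize the argument to first dispose of the $bw(M)=1$ case using Lemma \ref{lem:flatstructure1} (a loop/coloop of $M$ expands to loops/coloops of $M^t$), and then, for $bw(M)\ge 2$, observe that $t(bw(M)-1)+1 \ge t+1 \ge 2$, so the gadget widths (each at most $2$) are dominated by the bound, completing the proof.
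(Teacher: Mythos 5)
Your construction and your treatment of the surviving edges are exactly the paper's proof: replace each leaf $\phi(a)$ of an optimal branch-decomposition by a cubic gadget whose leaves carry $S_a$, use $\lambda_{M^t}(S_Y)=t\cdot\lambda_M(Y)$ (from Lemma \ref{lem:rank}) to get width $t\cdot(w_M(e)-1)+1\leq t(bw(M)-1)+1$ on each edge inherited from $T$, and dispose of the $bw(M)=1$ case separately via the fact that loops and coloops of $M$ expand to loops and coloops of $M^t$. The one place you diverge from the paper, however, is the one place your reasoning is wrong: the claim that the elements of $S_e$ are parallel when $e$ is not a loop, so that a gadget edge displaying $Y\subsetneq S_e$ has $\lambda_{M^t}(Y)\leq r_{M^t}(Y)\leq 1$ and hence width at most $2$. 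Clones are not parallel elements in $M^t$. By Lemma \ref{lem:rank}, $r_{M^t}(S_e)=t\cdot r_M(\{e\})=t$ for a non-loop $e$, so $S_e$ is an \emph{independent} set in $M^t$, and a proper subset $Y$ of it can have rank as large as $t-1$. Concretely, for $M=U_{2,4}$ and $t=3$ one has $M^3=U_{6,12}$, and a gadget edge displaying a $2$-element subset $Y$ of some $S_e$ has $\lambda_{M^3}(Y)=2+6-6=2$, hence width $3$, contradicting your claimed bound of $2$.

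The gap is localized and the repair is the paper's one-line argument: for $X\subsetneq S_a$ we have $\lambda_{M^t}(X)\leq r_{M^t}(X)\leq |X|\leq t-1$, so every gadget edge has width at most $t$, and $t\leq t(bw(M)-1)+1$ once $bw(M)\geq 2$. With that substitution your argument becomes the paper's proof essentially verbatim; as written, the step bounding the gadget edges fails, and your closing assertion that the gadget widths are ``each at most $2$'' is false in general.
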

	
\begin{proof}
  If $bw(M)=1$, then each element of $M$ is a loop or a coloop, so the
  same holds for $M^t$, so $bw(M^t)=1$, and the inequality holds in
  this case.  Now assume that $bw(M)>1$.

  Let $T_\phi$ be a branch-decomposition of $M$ of width $bw(M)$.  For
  each $a\in E(M)$, take a rooted tree with $t$ leaves where the
  degree of the root is $2$ and the degree of each other non-leaf is
  $3$, label its leaves with the elements of $S_a$, and identify the
  root with $\phi(a)$, the vertex of $T$ that is labeled $a$ in
  $T_\phi$.  This gives a branch-decomposition $T^t_{\phi'}$ of $M^t$.
  To prove the result, it suffices to show that
  $w(T^t_{\phi'})=t(bw(M)-1)+1$.

  If the edge $e$ of $T^t$ is an edge of $T$, then $e$
  displays $(S_Y,\overline{S_Y})$ for some $Y\subseteq E(M)$;
  otherwise $e$ displays $(X,\overline{X})$ for some $X\subsetneq S_a$
  and $a\in E(M)$.  In the first case,
  $$w_{M^t}(e) =\lambda_{M^t}(S_Y)+1 =t\cdot \lambda_M(Y)+1 =t\cdot
  (w_M(e)-1)+1.$$ Thus, $w_{M^t}(e)\leq t(bw(M)-1)+1$.  Also, equality
  holds for some edge of $T$.  In the second case, $r_{M^t}(X)< t$, so
  $w(e)\leq t\leq t(bw(M)-1)+1$.
\end{proof}

\begin{thm}\label{thm:bwgap}
  For any $n\in\mathbb{N}$, there are matroids $M'$ and $N'$ with no
  coloops and with the following properties: both are positroids, both
  are transversal matroids, they have the same configuration, and
  $bw(N')-bw(M')\geq n$.
\end{thm}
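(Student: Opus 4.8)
The plan is to follow the template of Theorems~\ref{thm:Tuttegap} and~\ref{thm:vertgap}: fix base matroids $M_0$ and $N_0$ with no coloops that share a configuration, are both positroids and both transversal, and then set $M'=M_0^t$ and $N'=N_0^t$ for a suitably large $t$. By Lemma~\ref{lem:expandconfig}, Theorem~\ref{thm:poscl}, and Corollary~\ref{cor:transv}, the expansions automatically keep the shared configuration and membership in both classes and have no coloops, so, exactly as in the earlier gap theorems, all the structural conclusions reduce to controlling $bw(N_0^t)-bw(M_0^t)$. In the spirit of Figure~\ref{fig:bwexample}, I would choose the base pair so that $E(M_0)$ is a union of three proper flats while $N_0$ attains the maximum branch-width $r(N_0)+1$ in a robust way; writing $r=r(M_0)=r(N_0)$ for the rank recorded in the common configuration, I want $bw(M_0)\le r$ on the one side and, on the other, I want \emph{every} three proper flats of $N_0$ to leave at least three elements of $E(N_0)$ uncovered.

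For the easy side, Theorem~\ref{thm:bwbd} gives $bw(M')=bw(M_0^t)\le t(bw(M_0)-1)+1\le t(r-1)+1$. The harder half is a matching lower bound on $bw(N')$, for which I would show that $E(N_0^t)$ is not a union of three proper flats; then Lemma~\ref{lem:hall} forces $bw(N_0^t)=r(N_0^t)+1=tr+1$ (equivalently, by Theorem~\ref{thm:tangle}, the collection of all non-spanning subsets of $E(N_0^t)$ is a tangle of order $tr+1$, with axiom (T3) amounting to exactly this non-covering statement). Combining the two bounds yields $bw(N')-bw(M')\ge (tr+1)-(t(r-1)+1)=t$, so taking $t\ge n$ proves the theorem.

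The work therefore concentrates on the claim that $E(N_0^t)$ is not a union of three proper flats, and I would establish it by mimicking the proof of Theorem~\ref{thm:notmaxkappa}. Given a covering $E(N_0^t)=X_1\cup X_2\cup X_3$ by proper flats chosen to maximize $|\theta(X_1)\cup\theta(X_2)\cup\theta(X_3)|$, Lemma~\ref{lem:flatstructure2} makes each $\theta(X_i)$ a proper flat of $N_0$ and makes the elements of $X_i$ outside $S_{\theta(X_i)}$ coloops of $N_0^t|X_i$, which one may arrange to avoid the full classes $S_{\theta(X_j)}$. Setting $D=E(N_0)-(\theta(X_1)\cup\theta(X_2)\cup\theta(X_3))$, the fibre $S_D$ is covered by these ``partial'' parts, so if $|D|\ge 3$ then $|S_D|=t|D|\ge 3t$ forces, by pigeonhole, some $|X_i-S_{\theta(X_i)}|\ge t$; then, exactly as in Theorem~\ref{thm:notmaxkappa} and using Lemma~\ref{lem:flatstructure3}, one can pull a whole class $S_e$ with $e\in D$ into $X_i$ and shed compensating coloops, strictly increasing $|\theta(X_1)\cup\theta(X_2)\cup\theta(X_3)|$ and contradicting maximality. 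Hence $|D|\le 2$, so a covering of $E(N_0^t)$ by three proper flats can exist only when some three proper flats of $N_0$ leave at most two elements uncovered. This three-flat analogue of Theorem~\ref{thm:notmaxkappa} is precisely what dictates the base example, and it forces the construction to take place in rank at least $4$: in low rank the covering flats are essentially the cyclic flats, which the configuration fixes, so the asymmetry between $M_0$ and $N_0$ must come from non-cyclic hyperplanes, whose existence is not pinned down by the configuration.

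The main obstacle is exactly this lower bound on $bw(N_0^t)$. Unlike Tutte and vertical connectivity, branch-width obeys no exact expansion law---Theorem~\ref{thm:bwbd} is only an inequality---so I cannot read $bw(N_0^t)$ off from $bw(N_0)$, and everything hinges on producing a tangle of full order $r(N_0^t)+1$. The delicate point is that large branch-width at the base level is \emph{not} inherited by the expansion: if three proper flats of $N_0$ leave only one or two elements uncovered, the $t$ clones lying over those elements can be split among the three flats and absorbed after expanding, collapsing $bw(N_0^t)$ below $tr+1$. Forcing at least three uncovered elements is what survives $t$-expansion, and the crux of the argument is building a concrete positroid–transversal base pair $(M_0,N_0)$ with a common configuration in which $M_0$ is covered by three proper flats while every three proper flats of $N_0$ fall short of covering by at least three.
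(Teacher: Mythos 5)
Your outline is internally consistent and genuinely different from the paper's argument, but as written it is not a proof: every structural step you cite is available in the paper (the upper bound on $bw(M_0^t)$ is Theorem \ref{thm:bwbd}, and the three-flat expansion criterion you propose to prove by mimicking Theorem \ref{thm:notmaxkappa} is literally the last theorem of Section \ref{section:branch}), so the entire mathematical content of your proof is the existence of the base pair $(M_0,N_0)$ --- and you never construct one, explicitly deferring it as ``the crux.'' This is a genuine gap, and it is not a routine one. The paper's own pair from Figure \ref{fig:bwexample} cannot serve: there $N$ is covered by its three lines together with the single free point, so statement (1) of that last theorem holds for $N$ and $bw(N^t)\le 3t$ for $t\ge 3$, well below the maximum $3t+1$ your argument requires. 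Naive modifications also fail, because the elements left over by the cyclic flats of a candidate $N_0$ typically form an independent flat that can itself be used as one of the three covering flats; your non-covering condition must be verified against \emph{all} proper flats, not just cyclic ones, and no candidate in the proposal is ever subjected to that check. (Your side claim that the construction is forced into rank at least $4$ is also unjustified, and in fact false.)

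The gap is fillable, so your route can be completed. For instance, take rank $5$ and $|E|=18$: let $M_0$ have cyclic flats $\emptyset$, $E$, and three pairwise disjoint sets $C_1,C_2,C_3$, each of size $6$ and rank $4$ (so each restriction is $U_{4,6}$), and let $N_0$ have the same configuration but with its three rank-$4$ cyclic flats pairwise intersecting in three distinct single elements, leaving three elements in no proper cyclic flat; Theorem \ref{thm:axioms} confirms both are matroids. Every proper flat of either matroid is one of the three rank-$4$ cyclic flats (rank $4$ leaves no room for added coloops) or an independent flat of size at most $4$, so any three proper flats of $N_0$ cover at most $15$ elements, while $E(M_0)=C_1\cup C_2\cup C_3$; both matroids are transversal, with presentation $(E-C_1,E-C_2,E-C_3,E,E)$, and both satisfy the cyclic interval property of Theorem \ref{thm:char} (order the $C_i$ as consecutive blocks, placing the overlap points of $N_0$ at block boundaries and the three free points last). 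With such a pair, your computation $bw(N_0^t)-bw(M_0^t)\ge (5t+1)-(4t+1)=t$ goes through. It is worth contrasting this with the paper, which never needs an expansion of maximum branch-width: it computes $bw(M^t)=2t+1$ and $bw(N^t)=2t+\lceil t/3\rceil+1$ exactly, via explicit branch-decompositions and tangles (Theorem \ref{thm:tangle}), for the rank-$3$ pair of Figure \ref{fig:bwexample}. Your approach trades those exact computations for an existence problem about configurations; that problem is solvable, but it is where all the work lies, and it is the part the proposal leaves undone.
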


\begin{proof}
  Let $M$ and $N$ be the matroids in Figure \ref{fig:bwexample}.  We
  will show that $M'=M^{3n}$ and $N'=N^{3n}$ have the properties in
  the theorem.  It is easy to check that both $M$ and $N$ are
  positroids and transversal, and that they have the same
  configuration, so these properties hold for their $t$-expansions.

  After Theorem \ref{thm:tangle}, we showed that $bw(M)=3$, so
  $bw(M^t)\leq 2t+1$ by Theorem \ref{thm:bwbd}.  Equality will follow
  by showing that the set
  $\mathcal{T} = \{X\subseteq E(M^t)\,:\,r_{M^t}(X)<2t\}$ is a tangle
  of order $2t+1$. Properties (T1) and (T4) are immediate.  Let $L_1$,
  $L_2$, and $L_3$ be the $3$-point lines of $M$.  Note that a
  hyperplane of $M^t$ contains at most one of $S_{L_1}$, $S_{L_2}$,
  and $S_{L_3}$, and so, since $r(M^t)-r_{M^t}(S_{L_i})=t$, the
  largest hyperplanes of $M^t$ have $4t-1$ elements.  Therefore, since
  $|E(M^t)|=9t$, for any set $X\subseteq E(M^t)$, at least one of $X$
  and $\overline{X}$ spans $M^t$.  Property (T2) follows.  Also, since
  the smallest closures of circuits are $S_{L_1}$, $S_{L_2}$, and
  $S_{L_3}$, which have rank $2t$, if $X\in \mathcal{T}$, so
  $r_{M^t}(X)<2t$, then $X$ is independent, and so $|X|<2t$, from
  which property (T3) follows.  Thus, $bw(M^t)=2t+1$.
	
  We next show that $bw(N^t)=2t+\lceil t/3\rceil+1$.  We first give a
  branch-decomposition of $N^t$ that has width
  $2t+\lceil t/3\rceil+1$, thereby showing that
  $bw(N^t)\leq 2t+\lceil t/3\rceil+1$.  Let the cyclic lines of $N$ be
  $L_1$, $L_2$, and $L_3$ where $4\in L_1\cap L_2$.  Partition $S_1$
  into three sets, $X_1,X_2,X_3$, each of size $\lfloor t/3\rfloor$ or
  $\lceil t/3\rceil$.  Construct a branch-decomposition of $N^t$ for
  which, for some degree-$3$ vertex $v$ of the tree $T$, deleting $v$
  produces three subtrees with the labels on the leaves of these
  subtrees being $S_{L_1}\cup X_1$, $(S_{L_2}-S_4)\cup X_2$, and
  $S_{L_3}\cup X_3$.  Note that for the pair $(X,\overline{X})$ that
  is displayed by any edge of the tree, one of $X$ and $\overline{X}$
  spans $N^t$ and the other has rank at most $2t+\lceil t/3\rceil$, so
  $w(e) \leq 2t+\lceil t/3\rceil$; also, equality holds for at least
  one edge that is incident with $v$. Thus,
  $w(T)=2t+\lceil t/3\rceil+1$, so
  $bw(N^t)\leq 2t+\lceil t/3\rceil+1$.
	
  The equality $bw(N^t)=2t+\lceil t/3\rceil+1$ will follow by showing
  that
  $$\mathcal{T} = \{ X\subseteq E(N^t)\,:\,r_{N^t}(X) <2t+\lceil
  t/3\rceil\}$$ is a tangle of order $2t+\lceil
  t/3\rceil+1$. Properties (T1) and (T4) are immediate.  The same
  argument as used for $M^t$ shows that if $X\subseteq E(N^t)$, then
  either $X$ or $\overline{X}$ spans $N^t$, so property (T2) follows.
  Note that $X\in\mathcal{T}$ if and only if
  $\cl_{N^t}(X)\in\mathcal{T}$, so to prove property (T3), it suffices
  to show that no union of three flats, $X$, $Y$, and $Z$, of rank
  $2t+\lceil t/3\rceil-1$ can be $E(N^t)$.  Note that a flat $F$ of
  rank $2t+\lceil t/3\rceil-1$ is either independent or is the union
  of one of $S_{L_1}$, $S_{L_2}$, and $S_{L_3}$ and a set of
  $\lceil t/3\rceil-1$ coloops of $N^t|F$.  If at least one of $X$,
  $Y$, and $Z$ is independent, then
  $$|X\cup Y\cup Z|\leq 2t+\lceil t/3\rceil-1+ 2\bigl(3t+ \lceil
  t/3\rceil-1 \bigr)<9t.$$
  If none of $X$, $Y$, and $Z$ is
  independent, then since $S_{L_1}\cap S_{L_2} = S_4$, we have
  $$|X\cup Y\cup Z|\leq 3\bigl(3t+ \lceil
  t/3\rceil-1 \bigr)-t<9t.$$ In either case,
  $X\cup Y\cup Z\ne E(N^t)$, so property (T3) holds.

  Thus, $bw(N^{3n})-bw(M^{3n})= 2\cdot 3n+n+1 - (2\cdot 3n+1) = n$, as
  needed.
\end{proof}

The matroids $M$ and $N$ used in that proof show that equality can
hold in the inequality in Theorem \ref{thm:bwbd}, as in the case of
$M$, but, as in the case of $N$, it might be a strict inequality.
Also, in contrast to Theorem \ref{thm:vertkexp}, the inequality can be
strict even if the branch-width is less that the maximum that it can
be, namely, one more than the rank.  To see that, observe that
$bw(N^2)=6<r(N^2)+1=7$, but $bw(N^6)=15$ for the $3$-expansion $N^6$
of $N^2$.

The proof of the next result adapts the ideas in the proof of Theorem
\ref{thm:notmaxkappa}.

\begin{thm}
  For a matroid $M$, the following statements are equivalent:
  \begin{itemize}
  \item[(1)] there are proper flats $F_1$, $F_2$, and $F_3$ of $M$ for
    which $|E(M)-(F_1\cup F_2\cup F_3)|\leq 2$,
  \item[(2)] $bw(M^t)\leq r(M^t)$ for all $t\geq 3$,
  \item[(3)] $bw(M^t)\leq r(M^t)$ for some $t\geq 3$.
  \end{itemize}
  Thus, if $E(M)$ is not the union of three proper flats of $M$ and a
  $2$-element subset of $E(M)$, then $bw(M^t)=r(M^t)+1$ for all
  $t\in\mathbb{N}$.
\end{thm}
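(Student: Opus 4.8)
The plan is to reduce all three conditions, and the final conclusion, to Lemma~\ref{lem:hall}, which says that $bw(M^t)\le r(M^t)$ if and only if $E(M^t)$ is the union of three proper flats of $M^t$. Condition (2) obviously implies (3), so the substance is to prove (1)$\Rightarrow$(2) and (3)$\Rightarrow$(1), after which the final sentence follows by contraposition together with the extreme cases $t=1$ and $t=2$.

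For (1)$\Rightarrow$(2), suppose $F_1,F_2,F_3$ are proper flats of $M$ with $D:=E(M)-(F_1\cup F_2\cup F_3)$ satisfying $|D|\le 2$. By Lemma~\ref{lem:flatstructure1}, each $S_{F_i}$ is a flat of $M^t$, and since $F_i$ is proper, $r(M^t)-r_{M^t}(S_{F_i})=t(r(M)-r_M(F_i))\ge t$. Because $|S_D|=t|D|\le 2t$ and $t\ge 3$, so that $3(t-1)\ge 2t$, I can partition $S_D$ into three blocks $A_1,A_2,A_3$ each of size at most $t-1$. By Lemma~\ref{lem:flatstructure3}, each $S_{F_i}\cup A_i$ is a flat of $M^t$, and it is proper since its rank is at most $r_{M^t}(S_{F_i})+|A_i|<r(M^t)$. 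Their union is $S_{F_1\cup F_2\cup F_3\cup D}=E(M^t)$, so Lemma~\ref{lem:hall} yields (2). This is precisely where $t\ge 3$ is needed: for $t=2$ the block-size constraint $3(t-1)\ge 2t$ fails.

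For (3)$\Rightarrow$(1), I would adapt the $\theta$/swapping argument from the proof of Theorem~\ref{thm:notmaxkappa}, now with three flats and the threshold $3t$ in place of $2t$. Given $E(M^t)=X_1\cup X_2\cup X_3$ with each $X_i$ a proper flat (by Lemma~\ref{lem:hall}), choose such a triple maximizing $|\theta(X_1)\cup\theta(X_2)\cup\theta(X_3)|$, where $\theta$ is as in Lemma~\ref{lem:flatstructure2}; as there, the coloop parts $X_i-S_{\theta(X_i)}$ may be taken disjoint from the other $S_{\theta(X_j)}$. Set $S_D=E(M^t)-\bigcup_i S_{\theta(X_i)}$. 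Since $S_D\subseteq\bigcup_i(X_i-S_{\theta(X_i)})$, if $|D|\ge 3$ then $\sum_i|X_i-S_{\theta(X_i)}|\ge|S_D|=t|D|\ge 3t$, so some $|X_j-S_{\theta(X_j)}|\ge t$; picking $e$ with $(X_j-S_{\theta(X_j)})\cap S_e\ne\emptyset$ (so $S_e\not\subseteq X_j$, i.e.\ $e\notin\theta(X_j)$) and performing the same clone-exchange -- pulling all of $S_e$ into the closure of $X_j$ while transferring an equal number of coloops out, so ranks do not rise and all three sets remain proper flats covering $E(M^t)$ -- produces a triple with strictly larger $\theta$-union, a contradiction. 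Hence $|D|\le 2$, and the proper flats $\theta(X_1),\theta(X_2),\theta(X_3)$ of $M$ witness (1). \textbf{The main obstacle is this clone-exchange step}: verifying that moving an entire class $S_e$ into one flat and compensating with coloops keeps all three sets proper flats whose union is $E(M^t)$, exactly the delicate rank bookkeeping carried out for two flats in Theorem~\ref{thm:notmaxkappa}. Crucially, this direction uses only $t\ge 2$.

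Finally, the concluding sentence asserts $bw(M^t)=r(M^t)+1$ for all $t\in\mathbb{N}$ whenever (1) fails. For $t\ge 3$ this is the contrapositive of (3)$\Rightarrow$(1), together with the general bound $bw(M^t)\le r(M^t)+1$. For $t=1$ it is Lemma~\ref{lem:hall} applied to $M^1=M$, since failure of (1) in particular means $E(M)$ is not a union of three proper flats. The case $t=2$ is not covered by the equivalence, and here I would invoke the $t=2$ instance of (3)$\Rightarrow$(1): if $bw(M^2)\le r(M^2)$, then $E(M^2)$ is a union of three proper flats, which by the argument above forces (1), contrary to hypothesis; hence $bw(M^2)=r(M^2)+1$.
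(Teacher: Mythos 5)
Your proposal is correct and follows essentially the same route as the paper's proof: (1)$\Rightarrow$(2) by partitioning $S_D$ into three blocks of size less than $t$ (exactly where $t\geq 3$ enters) and invoking Lemmas \ref{lem:flatstructure3} and \ref{lem:hall}, and (3)$\Rightarrow$(1) by maximizing $|\theta(X_1)\cup\theta(X_2)\cup\theta(X_3)|$ and adapting the clone-exchange argument of Theorem \ref{thm:notmaxkappa}, deferred at the same level of detail as the paper itself. Your explicit handling of the $t=1$ and $t=2$ cases for the concluding sentence (via Lemma \ref{lem:hall} and the observation that (3)$\Rightarrow$(1) does not need $t\geq 3$) is in fact slightly more careful than the paper, which leaves that step implicit in its ``Thus.''
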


\begin{proof}
  Assume that statement (1) holds.  Let
  $D=E(M)-(F_1\cup F_2\cup F_3)$.  If $D=\emptyset$, then
  $E(M^t)=S_{F_1}\cup S_{F_2}\cup S_{F_3}$, so statement (2) follows
  by Lemma \ref{lem:hall}.  Now assume that $D\ne\emptyset$.  For each
  $i\in[3]$, the set $S_{F_i}$ is a flat of $M^t$ and
  $$r(M^t)-r_{M^t}(S_{F_i})=t(r(M)-r_M(F_i))\geq t.$$ 
  Since $|D|\leq 2$, for any $t\geq 3$, there is a partition
  $\{A_1,A_2,A_3\}$ of $S_D$ with $|A_i|<t$ for all $i\in[3]$.  By
  Lemma \ref{lem:flatstructure3}, the sets $S_{F_i}\cup A_i$, for
  $i\in[3]$, are flats of $M^t$, so statement (2) holds by Lemma
  \ref{lem:hall}.

  Statement (2) implies statement (3), so now assume that statement
  (3) holds.  We use $\theta(X)$ as in Lemma \ref{lem:flatstructure2}.
  By statement (3), for some $t\geq 3$, the set $E(M^t)$ is the union
  of three proper flats of $M^t$; pick such flats, $X_1$, $X_2$, and
  $X_3$, so that $|\theta(X_1)\cup \theta(X_2) \cup \theta(X_3)|$ is
  maximal.  As in the proof of Theorem \ref{thm:notmaxkappa}, we may
  assume that $(X_i-S_{\theta(X_i)})\cap S_{\theta(X_j)} = \emptyset$
  for all $\{i,j\}\subset [3]$.  Now
  $E(M^t)-(S_{\theta(X_1)}\cup S_{\theta(X_2)} \cup
  S_{\theta(X_3)})=S_D$ for some $D\subseteq E(M)$.  We claim that
  $|D|\leq 2$.  Assume instead that $|D|\geq 3$.  Thus,
  $|X_i-S_{\theta(X_i)}|\geq t$ for at least one $i\in[3]$; by
  symmetry, we may assume that $|X_1-S_{\theta(X_1)}|\geq t$.  Assume
  that $(X_1-S_{\theta(X_1)})\cap S_e\ne\emptyset$.  The exchange
  argument used in the proof of Theorem \ref{thm:notmaxkappa} can be
  adapted to move the elements of $S_e\cap( X_2\cup X_3)$ into $X_1$
  without increasing the ranks of the flats; this yields three proper
  flats that contradict the maximality assumption, so $|D|\leq 2$.
  Thus, the proper flats $\theta(X_1)$, $\theta(X_2)$, and
  $\theta(X_3)$ of $M$ show that statement (1) holds.
\end{proof}

\end{document}